\theoremstyle{theorem}
\newtheorem{theorem}{Theorem}[section]
\newtheorem{thm}[theorem]{Theorem}
\newtheorem{prop}[theorem]{Proposition}
\newtheorem{lem}[theorem]{Lemma}
\newtheorem{cor}[theorem]{Corollary}
\newtheorem{add}[theorem]{Addendum}
\theoremstyle{definition}
\newtheorem{defn}[theorem]{Definition}
\newtheorem{definition}[theorem]{Definition}
\newtheorem{rem}[theorem]{Remark}
\newtheorem{corollary}[theorem]{Corollary}
\theoremstyle{definition}
\theoremstyle{remark}
\DeclareMathOperator{\ind}{ind}
\newcommand{\naturals}{\mathbb{N}}
\newcommand{\integers}{\mathbb{Z}}
\newcommand{\rationals}{\mathbb{Q}}
\newcommand{\boundary}{\partial}
\newcommand{\iso}{\cong}
\newcommand{\Riem}{\operatorname{Riem}}
\newcommand{\Diff}{\operatorname{Diff}}
\newcommand{\Top}{\operatorname{Top}}
\DeclareMathOperator{\Homeo}{Homeo} 
\DeclareMathOperator{\Nil}{{\Nil}^3}
\DeclareMathOperator{\Spin}{Spin}
\newcommand{\Z}{\mathbb{ Z}}
\newcommand{\Q}{\mathbb{ Q}}
\newcommand{\R}{\mathbb{ R}}
\newcommand{\id}{\operatorname{id}}
\global\let\c@equation=\c@theorem}
\begin{document}

\title{The space of metrics of positive scalar curvature}

\author{Bernhard Hanke \and Thomas Schick \and
  Wolfgang Steimle}\thanks{Thomas Schick was supported by the Courant
    Center ``Higher order structures'' of the institutional strategy of  the 
    Universit\"at G\"ottingen within the German excellence initiative. Wolfgang Steimle was supported by the Hausdoff Center for Mathematics, Bonn. }
\address{Institut f\"ur Mathematik, Universit\"at Augsburg}
\address{Mathematisches Institut, Georg-August-Universit\"at G\"ottingen}
\address{Mathematisches Institut, Universit\"at Bonn}
\email{hanke@math.uni-augsburg.de}
\email{schick@uni-math.gwdg.de}
\email{steimle@math.uni-bonn.de}

\begin{abstract} We study the topology of the space of positive scalar curvature metrics on high dimensional spheres and
  other spin manifolds. Our main result provides elements in
  higher homotopy and homology groups of these spaces, which, in contrast to
previous approaches, are of infinite order and survive in the (observer) moduli 
space of such metrics. 

Along the way we construct smooth fiber bundles over spheres whose total spaces have 
non-vanishing $\hat{A}$-genera, thus establishing  the non-multiplicativity of  
the $\hat{A}$-genus in fiber bundles with simply connected base.

\end{abstract}

\maketitle


\section{Introduction and summary}

The classification of positive scalar curvature metrics on closed smooth
manifolds is
a central topic in Riemannian geometry. Whereas the existence question has  
been resolved in many cases and is governed by the (stable) Gromov-Lawson
conjecture (compare e.g.~\cite{StolzICM,Schick}), information on the
topological complexity of the 
space of positive scalar curvature metrics on a given manifold $M$ has been sparse and only
recently some progress has been made \cite{BHSW, CS}.

We denote by ${\Riem}^+(M)$ the space of Riemannian metrics of positive 
scalar curvature, equipped with the $C^{\infty}$-topology, on a closed smooth manifold $M$. 
If it is not empty, we want to give information on the homotopy groups 
 $\pi_k({\Riem}^+(M), g_0 )$ for $g_0 \in {\Riem}^+(M)$ in the different
 path components of ${\Riem}^+(M)$. One method 
 to construct non-zero elements in these homotopy groups, developed by
 Hitchin,  is to pull back $g_0$ along a family of diffeomorphisms
 of $M$.  In \cite[Theorem 4.7]{Hitchin} this was used to prove existence of 
 non-zero classes of order two in $\pi_1({\Riem}^+(M),g_0)$ for certain manifolds $M$. 
 In \cite[Corollary 1.5]{CS} this method has been refined to show
 that there exist non-zero elements of 
 order two in infinitely many degrees of $\pi_*({\Riem}^+(M),g_0)$, when $M$
 is a spin manifold admitting a metric $g_0$ of positive scalar curvature. 
 
In our paper  we construct non-zero elements of \emph{infinite order} in  
 $\pi_k ({\Riem}^+(M), g_0)$ for $k\in\naturals$,  among others.   Our construction is
 quite different from Hitchin's in that
it does not rely on topological properties of the diffeomorphism group of
$M$.

Our first main result reads as follows. 

\begin{theorem} \label{main_general} Let $k \geq 0$ be a natural number. Then
  there  is a natural number $N(k)$ with the following 
properties: 
\begin{itemize} 
   \item[a)]  For each $n \geq N(k)$ and each spin manifold $M$ admitting a metric
$g_0$ of positive scalar curvature and of dimension $4n-k-1$,  the homotopy
group
\[
    \pi_{k} ({\Riem}^+(M), g_0) 
\]
contains elements of infinite order if $k\ge 1$, and infinitely many different
elements if $k=0$. Their images under the Hurewicz homomorphism in
$H_k(\Riem^+(M))$ still have infinite order.
\item[b)]  For $M=S^{4n-k-1}$, the images of these elements in the homotopy and homology groups of
the observer moduli space $\Riem^+(S^{4n-k-1}) / \Diff_{x_0}(S^{4n-k-1})$, see Definition \ref{def:observer}, have infinite order. 
\end{itemize} 
\end{theorem}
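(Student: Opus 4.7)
My plan is to reduce the theorem to a non-multiplicativity statement for the $\hat{A}$-genus in fiber bundles, and then obstruct null-homotopies via a family Lichnerowicz argument on the total space of a carefully constructed bundle. The crucial ingredient, advertised in the abstract, is the existence for every $k \geq 0$ and sufficiently large $n$ of a smooth fiber bundle $p \colon E \to S^{k+1}$ such that $E$ is a closed spin $4n$-manifold, some fiber $W^{4n-k-1}$ of $p$ admits a metric of positive scalar curvature, and $\hat{A}(E) \neq 0$. Constructing these bundles is the main geometric work of the paper; a natural approach is to combine surgery-theoretic constructions of exotic smooth fiber bundles (where the novelty can be detected stably in a homotopy group of a Thom spectrum) with an analysis of the spin cobordism class of the total space, ensuring that the $\hat{A}$-obstruction lies on $E$ rather than on the base or fiber.

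Given such an $E \to S^{k+1}$, I would next transfer it to a bundle with fiber the prescribed manifold $M$. Since $M$ and $W$ are both spin $(4n-k-1)$-manifolds admitting psc metrics, they are connected by a sequence of surgeries in codimension $\geq 3$; applying these in a parametrized way, together with a families version of the Gromov-Lawson-Chernysh surgery theorem for psc metrics, one replaces $W$ by $M$ fiber by fiber, obtaining a new bundle $E' \to S^{k+1}$ with fiber $M$ whose total space is spin cobordant to $E$, so that $\hat{A}(E') = \hat{A}(E) \neq 0$. Trivializing $E'$ over the two hemispheres of $S^{k+1}$ and equipping the fibers on one hemisphere with the constant metric $g_0$, the clutching data across the equator $S^k$ defines a family of psc metrics on $M$, representing a class $\alpha \in \pi_k({\Riem}^+(M), g_0)$.

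The class $\alpha$ is nontrivial: a null-homotopy would extend the family to the disk $D^{k+1}$, yielding a fiberwise psc metric on $E'$; scaling the base metric sufficiently small would then produce a psc metric on $E'$ itself, contradicting $\hat{A}(E') \neq 0$ via Lichnerowicz and the Atiyah-Singer index theorem. To upgrade to infinite order, I would take the $N$-fold fiberwise connected sum of $E'$ with itself (relative to a chosen fiber) to obtain $E'_N \to S^{k+1}$ with $\hat{A}(E'_N) = N \cdot \hat{A}(E')$; the associated class in $\pi_k$ is $N\alpha$, so $N\alpha = 0$ would again yield a psc metric on $E'_N$, contradicting $\hat{A}(E'_N) \neq 0$. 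The nontriviality of the Hurewicz image follows by the same argument, since the spin cobordism class of the total space is detected by pairing against the image of $\alpha$ under the Hurewicz map, and so is controlled by homological data.

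For part (b), when $M = S^{4n-k-1}$, the construction is compatible with a basepoint: the bundle $E'$ carries a marked fiber over a basepoint of $S^{k+1}$, and the pullback action of $\Diff_{x_0}(S^{4n-k-1})$ on ${\Riem}^+(S^{4n-k-1})$ corresponds geometrically to twisting $E'$ by a bundle that preserves the spin cobordism class of the total space. Hence $\hat{A}(E')$ is unchanged by passage to the quotient, and the obstruction argument descends to the observer moduli space. The main obstacle throughout is the first step: the explicit construction of bundles $E \to S^{k+1}$ with non-vanishing $\hat{A}$-genus for every $k$, which requires genuinely new geometric input beyond the diffeomorphism-pullback constructions of Hitchin and of \cite{CS}, and which is responsible for the constraint $n \geq N(k)$.
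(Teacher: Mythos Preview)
Your proposal has the right conceptual skeleton --- build a bundle with non-multiplicative $\hat A$-genus and use index theory to obstruct null-homotopies --- but the central step fails, and this failure is not a technicality but rather the whole point of the paper's more elaborate construction.

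The fatal gap is the claim that one can replace the fiber $W$ by an arbitrary spin manifold $M$ (of the same dimension, admitting psc) via fiberwise codimension-$\geq 3$ surgeries, obtaining a new bundle $E'\to S^{k+1}$ with fiber $M$ and $\hat A(E')\neq 0$. First, there is no reason for $M$ and $W$ to be spin cobordant, let alone related by such surgeries. Second, and more decisively, for the most important case $M=S^{4n-k-1}$ such an $E'$ \emph{cannot exist}: the paper shows (Proposition~\ref{prop_multiplicative_fiber}) that homotopy spheres are strongly $\hat A$-multiplicative fibers, so every smooth oriented bundle $S^{4n-k-1}\to E'\to S^{k+1}$ has $\hat A(E')=0$. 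Thus your clutching construction, which would produce the element $t\mapsto \psi(t)^*g_0$ for the clutching map $\psi\colon S^k\to\Diff(M)$, collapses precisely for spheres, and part (b) is unreachable by this route. More generally, the elements you construct are by definition \emph{not geometrically significant} (Definition~\ref{def:geom_sign}), and for any $\hat A$-multiplicative fiber $M$ they are annihilated by $\hat A_\pi$.

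The paper's mechanism is genuinely different. The bundle is taken over $S^k$ (not $S^{k+1}$), and after removing tubular neighborhoods of two sections its fibers become \emph{bordisms} $S^{4n-k-1}\rightsquigarrow S^{4n-k-1}$; a fiberwise connected sum with the trivial bundle $S^k\times (M\times[0,1])$ then makes the fibers bordisms $M\rightsquigarrow M$ for \emph{any} $M$, with no cobordism hypothesis on $M$. The family of psc metrics is produced not by clutching but by extending the constant metric $g_0$ on one boundary component across this fiberwise bordism, using Igusa's fiberwise generalized Morse theory together with Walsh's families version of Gromov--Lawson surgery (Theorem~\ref{Walsh_Existence}). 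This is the substantial technical input you allude to as ``families Gromov--Lawson--Chernysh'' but do not actually deploy; it is what allows the construction to work for $\hat A$-multiplicative fibers and to produce geometrically significant classes that survive in the observer moduli space.
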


For $k=0$ these statements are well known with $N(k) =2$ and $\Diff(S^{4n-1})$ instead of $\Diff_{x_0}(S^{4n-1})$, see \cite[Theorem
IV.7.7]{LawsonMichelsohn} and \cite[Theorem 4.47]{GLIHES}. 

Refined versions of part b) of Theorem \ref{main_general} will be stated in Theorems \ref{theo:main_geom} 
and \ref{main_moduli} below. 

Following a suggestion by one of the referees we remark that Theorem \ref{main_general} implies the following 
stable statement for manifolds in arbitrary dimension.

{\begin{prop}
  For each $k\geq 1$ and each spin manifold $M$ in dimension $(3-k)\;{\rm
    mod}\;4$ and admitting a metric of positive scalar curvature, the
  homotopy group $\pi_k({\Riem}^+(M \times B^{N} ))$ contains elements of
  infinite order. Here, $N$ is a constant depending on $k$ and $\dim M$,  $B$
  denotes a  Bott manifold, an eight dimensional closed simply
  connected spin manifold satisfying $\hat{A}(B) = 1$, and $B^{N}$ is the
  $N$-fold cartesian product.
\end{prop}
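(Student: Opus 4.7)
The strategy is to reduce directly to Theorem~\ref{main_general} applied to the product $M' := M \times B^N$ for a suitable $N$, exploiting that the Bott manifold $B$ is $8$-dimensional, so that multiplication by $B^N$ shifts $\dim M$ by a multiple of $8$ without destroying spinability or the existence of psc metrics.

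Two verifications are needed before invoking Theorem~\ref{main_general}. First, $M'$ is spin: products of spin manifolds are spin because $w_2$ is additive. Second, $M'$ admits positive scalar curvature: starting from a psc metric $g_0$ on $M$ and any Riemannian metric $h$ on $B^N$, the product metric $g_0 \oplus \lambda^2 h$ has scalar curvature $\scal(g_0) + \lambda^{-2}\scal(h)$, which is positive for $\lambda$ sufficiently large. Note that $B$ itself carries no psc metric by Lichnerowicz (since $\hat A(B) = 1$), so the rescaling in the $B$-direction is essential here.

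Next I match dimensions. The hypothesis $\dim M \equiv 3-k \pmod 4$ is equivalent to $\dim M + k + 1 \equiv 0 \pmod 4$, and since $8N \equiv 0 \pmod 4$ the integer
\[ n := \frac{\dim M + 8N + k + 1}{4} \]
is well-defined and satisfies $\dim M' = \dim M + 8N = 4n - k - 1$. Choosing $N$ large enough so that $n \geq N(k)$ (possible because $n$ grows linearly in $N$) then fixes the constant $N$ appearing in the proposition, and Theorem~\ref{main_general}~a) produces an element of infinite order in $\pi_k(\Riem^+(M'), g_0 \oplus \lambda^2 h)$, which is the desired class in $\pi_k(\Riem^+(M \times B^N))$.

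Since this is essentially a direct corollary, there is no serious technical obstacle; the argument is the standard psc stabilization trick of taking products with copies of the Bott manifold. Its role here is exactly to promote the $(4n-k-1)$-dimensional statement of Theorem~\ref{main_general} into a statement for spin psc manifolds of arbitrary dimension, modulo the unavoidable congruence $\dim M \equiv 3-k \pmod 4$ which survives because $8 \equiv 0 \pmod 4$.
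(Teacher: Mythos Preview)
Your proof is correct and matches the paper's approach: the paper states this proposition as an immediate consequence of Theorem~\ref{main_general} (introduced with ``we remark that Theorem~\ref{main_general} implies the following stable statement'') and gives no separate argument, so you have simply written out the details the authors left implicit. The dimension count, the spin and psc verifications for $M\times B^N$, and the choice of $N$ to force $n\ge N(k)$ are exactly what is needed.
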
}
Note that, as a spin manifold with non-vanishing $\hat{A}$-genus,
the manifold $B$ does not admit a metric of positive scalar
curvature. Inspired by the stable Gromov-Lawson-Rosenberg conjecture, we wonder
whether there is a stability pattern in $\pi_k(\Riem^+(M\times B^N))$ for
growing $N$.

Remarkably, at the end of \cite[Section 5]{GLIHES} Gromov and Lawson write:  ``\textit{The
construction above can be greatly generalized using  
Browder-Novikov Theory. A similar construction detecting higher homotopy
groups of the space of positive scalar  
curvature metrics can also be made.}'' We are not sure what
Gromov and Lawson precisely had in mind here. Our paper may be viewed
as an attempt to realize the program hinted at by these remarks. 

Recall that the diffeomorphism group $\Diff(M)$ acts on $\Riem^+(M)$ via pull-back of metrics. 
The orbit space $\Riem^+(M)/\Diff(M)$ is the {\em moduli space} of Riemannian metrics of positive scalar 
curvature. Because the action is not free (the isotropy group at $g \in \Riem^+(M)$ is equal to the 
isometry group of $g$, which is a compact Lie group by the Myers-Steenrod theorem) it is convenient to restrict the action 
to the following subgroup of $\Diff(M)$.

\begin{definition}\label{def:observer}
 Let $M$ be a connected smooth manifold and let $x_0 \in M$. The
  \emph{diffeomorphism group with observer} $\Diff_{x_0}(M)$ is the subgroup of
  $\Diff(M)$ consisting of diffeomorphisms $\phi\colon M \to M$ fixing $x_0
  \in M$ and with $D_{x_0}\phi=\id_{T_{x_0}M}$. 

This definition first appeared in \cite{AkuBot}. 
It is easy to see that $\Diff_{x_0}(M)$, unlike $\Diff(M)$, 
acts freely on the space of Riemannian metrics on $M$ (as long as $M$
is connected). The orbit space ${\Riem}^+(M) / \Diff_{x_0}(M)$ 
is called the \emph{observer moduli space} of positive scalar curvature
metrics. 
\end{definition}

The construction of Theorem \ref{main_general} is based on the following fundamental result, which we 
regard  of independent interest.  

\begin{theorem} \label{technical} Given $k,l \geq 0$  there is an
  $N=N(k,l)\in\naturals_{\ge 0}$ with the following property: For all $n\geq
  N$, there is a $4n$-dimensional smooth closed spin manifold $P$ with 
  non-vanishing $\hat A$-genus and which fits into a smooth fiber
  bundle
\[
    F \hookrightarrow P \to S^{k}.
\]
In addition we can assume that 
the following conditions are satisfied: \begin{enumerate} 
   \item\label{item:high_connect} The fiber $F$ is $l$-connected, and
   \item\label{item:section} the bundle $P \to S^{k}$ has a smooth section
     $s\colon  S^{k} \to P$ with trivial normal bundle. 
\end{enumerate}
\end{theorem}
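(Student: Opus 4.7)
My plan is to build $P$ by a clutching construction: given a closed spin manifold $F_0$ of dimension $4n-k$ and a smooth family of diffeomorphisms $\varphi\colon S^{k-1}\to\Diff(F_0,D)$ that fix a smoothly embedded disk $D\subset F_0$ pointwise and with trivial framing, glue two copies of $F_0\times D^k$ along $\varphi$ to produce a smooth bundle $F_0\to P\to S^k$. The canonical section through the center of $D$ then automatically has trivial normal bundle, and the $\hat{A}$-genus of $P$ is encoded as a family-index invariant of $\varphi$ via the Atiyah-Singer family index theorem applied to the vertical spin Dirac operator.

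For the fiber and the non-trivial family I would proceed as follows. First I would fix a simply-connected closed spin manifold $W$ with $\hat{A}(W)\neq 0$ (say a cartesian power of a Bott manifold), and build $F_0$ from $W$ by surgery below the middle dimension to render it $l$-connected; this is possible once $4n-k$ is sufficiently large relative to $l$, which fixes the lower bound $N(k,l)$. Then I would construct $\varphi$ by combining a non-trivial element of $\pi_{k-1}(\Diff(D^d,\partial))$, supported on a smaller disk inside $D$, with the identity on $F_0\setminus D$. Taking $n$ large enough to reach the concordance-stable range for $\Diff(D^d,\partial)$ and invoking rational computations of its homotopy groups (Farrell--Hsiang and successors) would ensure that the family index lands with non-zero image under the Gysin pushforward $KO^{-(4n-k)}(S^k)\to KO^{-4n}(\mathrm{pt})=\Z$. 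Since the trivial bundle $F_0\times S^k$ has vanishing $\hat{A}$ for $k\geq 1$, this non-triviality is detected precisely by $\hat{A}(P)\neq 0$.

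The main obstacle is producing $\varphi$ with non-vanishing family $\hat{A}$-invariant uniformly in $k$: for $k\equiv 0\pmod 4$ the rational calculation of $\pi_*\Diff(D^d,\partial)$ provides a direct supply, but for other residue classes the relevant $KO$-groups are torsion or zero, so one must either multiply the starting family by a Bott-periodic partner that shifts the invariant into an integer $KO$-summand, or refine a Hitchin-type spin construction so that the $\Z/2$-valued $\alpha$-invariant is upgraded to an integer through a suitable pairing. Once $\varphi$ is in hand, verifying that the parametrised surgeries used to make $F_0$ $l$-connected and to trivialise the normal bundle of the section preserve the spin-bordism class of $P$ (and hence $\hat{A}(P)$) is routine compared with the index-theoretic detection of $\hat{A}(P)\neq 0$ itself.
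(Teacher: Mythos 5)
Your proposed construction cannot produce a total space with non-vanishing $\hat{A}$-genus, for a structural reason that sinks the whole approach. If the clutching family $\varphi\colon S^{k-1}\to\Diff(F_0)$ is supported in a disk $D'\subset F_0$ and is the identity on the complement, then the resulting bundle $P\to S^k$ is a fiberwise connected sum of the trivial bundle $F_0\times S^k$ with the $S^{4n-k}$-bundle $Q\to S^k$ obtained by doubling the disk bundle over $S^k$ clutched by $\varphi|_{D'}$. A standard cut-and-paste bordism (the ``book'' construction used in the paper's proof of Corollary \ref{corol:non-mult-Ahat}) shows $P$ is spin bordant to the disjoint union of $F_0\times S^k$, $Q$, and $-(S^{4n-k}\times S^k)$, hence
\[
\hat A(P)=\hat A(F_0\times S^k)+\hat A(Q)-\hat A(S^{4n-k}\times S^k).
\]
For $k\geq 1$ the first and third terms vanish by multiplicativity of $\hat A$ in products, and $\hat A(Q)=0$ for \emph{every} smooth sphere bundle over a closed oriented base, by Proposition \ref{prop_multiplicative_fiber}(b): the Alexander trick exhibits $Q$ as a topological boundary, and rational Pontryagin numbers are topological bordism invariants. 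So $\hat A(P)=0$ no matter how non-trivial the chosen class in $\pi_{k-1}(\Diff(D^{4n-k},\partial))$ is. In particular, the Farrell--Hsiang rational homotopy of disk diffeomorphism groups does not help here: the invariants it detects (higher Reidemeister/Igusa--Klein torsion, algebraic $K$-theory classes) are precisely the ones that remain invisible to the $\hat A$-genus of a fiberwise connected sum with a sphere bundle. The ``Gysin pushforward of the family index lands non-trivially in $\Z$'' step is exactly where the argument breaks: that pushforward \emph{is} $\hat A(P)$, and it is forced to be zero.

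This is the reason the paper proceeds in the opposite order. Rather than engineering a non-trivial family of diffeomorphisms and hoping $\hat A$ will see it, the paper first manufactures the total space with the desired characteristic numbers, and only afterwards forces it to fiber. Concretely, starting from $P_0=S^k\times S^{4\alpha-k}\times S^{4\beta}$, Davis's surgery theorem produces a homotopy equivalent closed manifold $P_1$ with prescribed non-zero Hirzebruch $L$-class; the numerics in Lemma \ref{computation_of_coefficients} and Proposition \ref{calc} then force $\hat A(P_1)\neq 0$ because the signature must remain zero. Realizing $P_1$ as a fiber bundle over $S^k$ is the hard step: Farrell's fibering theorem handles $k=1$, while for $k\geq 2$ one passes through Casson's theory of pre-fibrations, after which Hatcher's concordance spectral sequence shows that multiplying by a suitable manifold $K$ with $\chi(K)=0$ and $\hat A(K)\neq 0$ kills the remaining obstruction in $\pi_{k-1}(\tilde A(F),A(F))$. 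The clutching family implicit in that construction genuinely moves the whole fiber, not just an embedded disk, which is exactly what your proposal lacks.
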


Recall  that in a fiber bundle $F\to E\to B$
of oriented closed smooth manifolds the Hirzebruch $L$-genus is multiplicative if $B$ is simply connected, i.e.~$L(E) = L(B) \cdot L(F)$, see \cite{CHS}.
Theorem \ref{technical} shows that a corresponding statement 
for the $\hat{A}$-genus is wrong, even if $B$ is a sphere.  

We remark that our construction, which is based on abstract existence results
in differential topology,  does not yield an explicit description of  the
diffeomorphism type of the fiber manifold  $F$. 

\begin{rem} \label{alpha} In Theorem \ref{technical} we can assume in addition that $\alpha(F)=0$ by taking the fiber connected sum 
of $P$ with the trivial bundle $S^k \times (-F) \to S^k$ where $-F$ denotes the manifold $F$ with reversed spin structure (so 
that $-F$ represents the negative of $F$ in the spin bordism group). This can be 
done along a trivialized normal bundle of a smooth section $s \colon S^k \to P$ as in point (\ref{item:section})  of Theorem 
\ref{technical}. 

In this case  the fiber $F$ carries a metric of positive scalar curvature
by a classical result of Stolz \cite{StolzAnn} if $l \geq  1$ and $\dim F \geq
5$. 
But note that there is no global choice of such metrics on the fibers of $E \to S^k$ depending smoothly on the 
base point, 
 because this would imply that $P$ carries a metric of positive scalar curvature by considering a metric 
 on $P$ whose restriction to the vertical tangent space coincides with the given family of 
 positive scalar curvature metrics, shrinking the fibers and applying the O'Neill formulas \cite[Chaper 9.D.]{Besse}. 
 However, this is impossible as $P$ is spin and satisfies $\hat{A}(P) \neq 0$. 
\end{rem}

Let us point out an implication pertaining to diffeomorphism groups. For an oriented closed smooth 
manifold $F$ and a pointed 
continuous map $\phi \colon (S^{k},*) \to (\Diff^+(F), \id)$ into the group of orientation preserving diffeomorphisms of $F$ we define
 \[
       M_\phi = {\left( S^{k} \times F \times [0,1] \right)   \big/ \, (x,f,0)\sim(\psi(x,f), 1 )} 
 \]
 where {$\psi \colon S^k \times F \to S^k \times F$} is a smooth map homotopic to the adjoint of $\phi$ and  inducing diffeomorphisms $\psi(x,-) \colon F \to F$ for each $x\in S^k$ 
(depending on $\phi$ there is a canonical isotopy class of such 
 maps $\psi$). 
 We can regard $M_{\phi}$ as the total space of a fiber bundle $F \to M_{\phi} \to S^k \times S^1$ obtained by a parametrized
 mapping cylinder construction for $\psi(x,-)$. 
The diffeomorphism type of this bundle 
 depends only on the homotopy class of $\phi$. 
   
For each $k \geq 0$ this leads to a group homomorphism 
\[
\hat{A}_{{\rm Diff}} \colon \pi_{k}({\Diff}^+(F), \id)\to \rationals,  \quad [\phi]\mapsto \hat{A}(M_\phi). 
\]
If $F$ admits a spin structure and $k\ge 2$, this invariant takes values in
$\integers$, as in this case the manifold $M_\phi$ carries a spin structure, 
{$\psi\colon S^k\times F\to S^k\times F$ admitting a lift} to the spin principal bundle. 
By Proposition \ref{prop:Ahat_additivity} below the same conclusion holds {in general} if $F$ admits 
both a spin structure and a metric of positive scalar curvature.

\begin{cor}\label{corol:non-mult-Ahat}
 For each $k\ge 0$ there is an oriented closed smooth manifold $F$ such that
  the homomorphism $\hat{A}_{{\rm Diff}} \colon \pi_{k}({\Diff}^+(F), \id)\to \rationals$ 
  is   non-trivial. 
 \end{cor}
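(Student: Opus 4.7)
The plan is to apply Theorem~\ref{technical} with $k$ replaced by $k+1$ (and with, say, $l=1$), yielding for $n$ sufficiently large a smooth spin fiber bundle $F\hookrightarrow P\to S^{k+1}$ of closed oriented manifolds such that $\hat A(P)\neq 0$; by Remark~\ref{alpha} we may further arrange that $F$ admits a positive scalar curvature metric. Such a bundle is classified up to isomorphism by an element of $\pi_{k+1}(B\Diff^+(F))\cong\pi_k(\Diff^+(F),\id)$; choose a pointed representative $\phi\colon(S^k,*)\to(\Diff^+(F),\id)$. The heart of the proof reduces to showing $\hat A(M_\phi)=\hat A(P)$, which then gives $\hat A_{\rm Diff}([\phi])\neq 0$.

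To do this, I would consider the collapse map $c\colon S^k\times S^1\to S^k\wedge S^1=S^{k+1}$. Because $\phi(*)=\id$, the bundle $M_\phi\to S^k\times S^1$ is canonically trivial over the subspace $S^k\vee S^1\subset S^k\times S^1$: on $\{*\}\times S^1$ the mapping torus is that of the identity on $F$, while each time-slice $S^k\times\{t_0\}$ carries the product bundle $S^k\times F$. The classifying map of $M_\phi$ therefore factors through $c$, and a side-by-side comparison of the mapping-torus construction of $M_\phi$ with the clutching description of $P$ identifies the factored map with the classifier of $P$. In other words, $M_\phi\cong c^*P$ as smooth fiber bundles.

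The map $c$ has degree one, and since the tangent bundles of $S^k\times S^1$ and $S^{k+1}$ are stably trivial, $\hat A(T(S^k\times S^1))=\hat A(TS^{k+1})=1\in H^0$. Applying the projection formula for fiber integration to the base-change square formed by $c^*P\to P$ and $S^k\times S^1\xrightarrow{c}S^{k+1}$, one obtains
\[
\hat A(M_\phi)\;=\;\int_{S^k\times S^1}c^*\bigl(\pi_!\hat A(T_vP)\bigr)\;=\;\deg(c)\int_{S^{k+1}}\pi_!\hat A(T_vP)\;=\;\hat A(P)\;\neq\;0,
\]
where $\pi\colon P\to S^{k+1}$ and $T_vP$ is the vertical tangent bundle. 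The case $k=0$ is analogous: there $c^*P\cong(F\times S^1)\sqcup P$, and $\hat A(F\times S^1)=0$ since $\dim F=4n-1$ is odd, so the same conclusion holds.

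The main obstacle I anticipate is the bundle isomorphism $M_\phi\cong c^*P$. Verifying this rigorously requires matching, up to smooth isotopy, the clutching construction producing $P$ with the mapping-torus construction underlying $M_\phi$, and in particular ensuring the orientation conventions are chosen so that $c$ has degree $+1$. Once this identification is in place, the $\hat A$-computation above is routine and completes the proof.
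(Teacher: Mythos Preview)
Your argument is correct, but it follows a genuinely different route from the paper's. The paper constructs an explicit oriented bordism between $P$ and $M_\phi$: starting from the clutching description $P=(D^{k+1}\times F)\cup_\psi(D^{k+1}\times F)$, it takes the standard relative bordism $(W;S^k\times I,D^{k+1}\times\partial I)\subset D^{k+1}\times I$ realising an index-zero surgery and, after crossing with $F$ and gluing the two ends via $\psi$, obtains a bordism from $P$ to $M_\phi$. Bordism invariance of the $\hat A$-genus then gives $\hat A(M_\phi)=\hat A(P)$ in one stroke.

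Your approach instead identifies $M_\phi$ with the pullback of $P$ along a degree-one map $S^k\times S^1\to S^{k+1}$ and computes $\hat A$ by fiber integration together with the base-change formula. This is equally valid and perhaps more conceptual; the paper's argument is more hands-on and avoids any appeal to push-forward in cohomology. Two minor remarks: the invocation of Remark~\ref{alpha} to arrange positive scalar curvature on $F$ is superfluous for this corollary (nothing about $\hat A_{\rm Diff}$ requires it); and since the collapse map is not smooth, you should say explicitly that you replace $c$ by a homotopic smooth map before forming the pullback and applying the projection formula. The sign of $\deg(c)$ is immaterial for the conclusion.
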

 
To our knowledge,  this is new for all $k \geq 0$.

  \begin{proof}
\begin{figure}\label{fig:cob1}
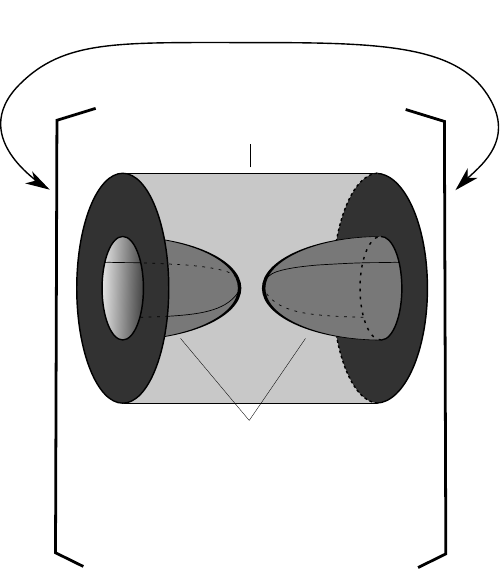
\caption{}
\end{figure}

Take a bundle $F \to P \to S^{k+1}$ from Theorem \ref{technical}. The total space can be obtained by a clutching 
  construction 
  \[
       P = (D^{k+1} \times F) \cup_{\psi} (D^{k+1} \times F)  
  \]
  for an appropriate smooth map $\psi \colon S^{k}\times F \to S^{k}\times F$. Denote by 
\[(W; S^k\times I, D^{k+1}\times\partial I)\subset D^{k+1}\times I\]
the standard bordism relative boundary of an index-zero surgery, as depicted in the upper part of 
Figure 1. Using $\psi$ to identify the left with the right end of $W\times F$ produces a bordism between $P$ and the required bundle over $S^{k}\times S^1$. The result follows from bordism invariance of the $\hat A$-genus.
 \end{proof}

The proof of Theorem \ref{technical} is based on results from classical
differential topology:
Surgery theory \cite{Davis}, Casson's theory of pre-fibrations \cite{Casson} and Hatcher's theory of concordance spaces \cite{Hatcher}. 
Theorem~\ref{main_general} for $M=S^{4n-k-1}$ follows from this with
the use of Igusa's fiberwise Morse theory \cite{Igusa2} and 
Walsh's generalization of the Gromov-Lawson surgery method to families of generalized Morse 
functions with critical points of {coindex} at least three \cite{Walsh}.

We will now formulate part b) of Theorem \ref{main_general} in a more general context. 

In order to express the fact that, contrary to the constructions in \cite{Hitchin} and \cite{CS},
our families of metrics in Theorem \ref{main_general} are not induced by families 
of diffeomorphisms of $M$, we propose the following definition.

\begin{definition}\label{def:geom_sign} 
Let $M$ be an oriented  closed smooth manifold and let $g_0$ be a positive scalar curvature metric on $M$. 
A class $c \in \pi_k ({\Riem}^+(M), g_0)$ is called {\em
    not geometrically significant}  if $c$ is represented by 
\[
  S^k\to \Riem^+(M), \quad t\mapsto \phi(t)^* g_0
\]
for some pointed continuous map $\phi\colon (S^k, *)\to (\Diff^+(M), \id)$.
Otherwise, $c$ is called \emph{geometrically significant}.
\end{definition}

Note that the homotopy classes of \cite{CS,Hitchin} are by their very
construction not geometrically significant.

\begin{definition} \label{def_multiplicative_fiber} 
Let $F$ be an oriented closed smooth manifold. We call $F$ \begin{itemize}
 \item an \emph{$ \hat A$-multiplicative fiber in 
  degree $k$} if for every oriented fiber bundle
  $F\to E\to S^{k+1}$ we have $\hat A(E)=0$.  (This implies that the map $\hat A\colon \pi_k({\Diff}^+(F), \id)\to \rationals$ defined 
  before Corollary \ref{corol:non-mult-Ahat} is zero). 
\item  a \emph{strongly $\hat 
    A$-multiplicative fiber in degree $k$} if for every oriented smooth fiber bundle
  $F\to
  E\to B$ over a closed oriented manifold $B$ of dimension
  $k+1$ we have $\hat A(E)= \hat{A}(B) \cdot \hat{A}(F)$.
 \end{itemize}
  \end{definition}
 
 Obviously $F$ is a strongly $\hat{A}$-multiplicative fiber
 in degree $k$ if $\dim F + k +1$ is not divisible by four.

\begin{prop} \label{prop_multiplicative_fiber}
\leavevmode
\begin{itemize} 
  \item[a)] If $F$ has vanishing rational Pontryagin classes, in particular if it is 
  stably parallelizable or a rational homology sphere, then $F$ is 
  an $\hat A$-multiplicative fiber in any degree. 
\item[b)] Every homotopy sphere is a strongly $\hat A$-multiplicative fiber in any degree.
\end{itemize} 
\end{prop}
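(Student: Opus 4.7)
The plan is to analyze $\pi_!\hat A(T^vE)\in H^*(B;\Q)$ via the splitting $TE\cong T^vE\oplus\pi^*TB$ from a horizontal distribution, which yields $\hat A(TE)=\hat A(T^vE)\cdot\pi^*\hat A(TB)$ and, by the projection formula,
\[
  \hat A(E) = \int_B \hat A(TB)\cdot\pi_!\hat A(T^vE).
\]
The hypothesis that $F$ has vanishing rational Pontryagin classes gives $\hat A(TF)=1$ cohomologically. For a rational homology sphere of dimension $4m$ this still holds: the lower Pontryagin classes vanish for dimensional reasons, while $p_m(TF)$ vanishes by Hirzebruch's signature formula combined with $\sigma(F)=0$ (since $H^{2m}(F;\Q)=0$ forces the intersection form to be trivial). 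In either case each $p_j(T^vE)$ restricts trivially to every fiber.

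For Part (a) with $B=S^{k+1}$, we have $\hat A(E)=\int_E\hat A(T^vE)$ because $TS^{k+1}$ is stably trivial. By the Serre spectral sequence (equivalently, Mayer-Vietoris for the clutching decomposition $E=D_+^{k+1}\times F\cup_\psi D_-^{k+1}\times F$), every class in $H^*(E;\Q)$ restricting to zero on each fiber is a multiple of $\pi^*\sigma$, where $\sigma\in H^{k+1}(S^{k+1};\Q)$ generates. Hence $p_j(T^vE)=\pi^*\sigma\cdot\gamma_j$ for some $\gamma_j$; since $(\pi^*\sigma)^2=0$, all products $p_ip_j$ with $i,j\geq 1$ vanish. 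Writing $\hat A(T^vE)=1+\sum_j c_j\, p_j(T^vE)$ with $c_j$ the linear coefficient of $p_j$ in the $\hat A$-polynomial, we find $\hat A(E) = c_n\int_E p_n(T^vE)$ for $4n=\dim E$. The principal obstacle is to show $\int_E p_n(T^vE)=0$; I would attempt this via a Chern-Weil/Chern-Simons transgression argument, representing $p_n(T^vE) = \delta(\mu)$ with $\mu\in H^{4n-1}(S^k\times F;\Q)$ coming from the Chern-Simons form comparing the two pullback connections $\psi(s)^*\nabla_F$ on $TF$ via the clutching. The vanishing of the \emph{lower} rational Pontryagin classes of $F$ should make this Chern-Simons class transgress to zero upon fiber integration over $S^k$, yielding $\int_F\gamma_n|_F=0$.

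For Part (b), the base $B$ is arbitrary, so a topological strategy works more cleanly. A homotopy sphere $\Sigma^d$ is homeomorphic to $S^d$ (Smale's h-cobordism theorem for $d\ge 5$; standard for $d\le 3$; Freedman for $d=4$). By Novikov's topological invariance of rational Pontryagin classes, $\hat A(E)$ is a topological invariant. Topologically $\Sigma\to E\to B$ is an $S^d$-bundle, and the fiberwise cone construction produces a topological $D^{d+1}$-bundle $\tilde E\to B$ with $\partial\tilde E=E$, classified by $B\Top(d+1)$. By Kirby-Siebenmann, $B\Top(d+1)\simeq_\Q B\SO(d+1)$, so rationally $\tilde E$ arises from a linear $\R^{d+1}$-bundle $V\to B$; consequently $T^vE\oplus\underline{\R}\cong\pi^*V$ rationally, giving $\hat A(TE)=\pi^*\hat A(V\oplus TB)$ and
\[
  \hat A(E) = \int_E \pi^*\hat A(V\oplus TB) = \Big(\int_{S^d}1\Big)\cdot\int_B\hat A(V\oplus TB) = 0
\]
for $d>0$, matching $\hat A(\Sigma)\hat A(B)=0$. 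The remaining obstacles are: in Part (a), completing the Chern-Simons transgression calculation rigorously (the crux of the whole proof); in Part (b), justifying the rational equivalence $B\Top(d+1)\simeq_\Q B\SO(d+1)$ and the topological cone construction as producing a genuine topological disk bundle.
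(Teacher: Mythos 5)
Your plan for part~(a) tracks the paper's argument for the first three steps (framing of the fiber, vanishing of products of Pontryagin classes via the relative cohomology or spectral sequence, hence $\hat A(E)$ reduces to a multiple of $\int_E p_n$), but then you stall exactly where the paper has its one real idea. You try to show $\int_E p_n(T^v E) = 0$ by a Chern--Simons/transgression argument, which you do not carry out and which is not obviously going to work: the entire point of Theorem~\ref{technical} is that $\hat A$-genera of total spaces of bundles over spheres are in general \emph{not} zero, so any successful argument must use the vanishing of products of Pontryagin classes, not just local curvature data. The paper instead observes that since only $p_n$ survives, the top-degree $\hat A$-polynomial is a scalar multiple of the top-degree $L$-polynomial, so $\hat A(E) = (\lambda^A_n/\lambda^L_n)\,\sigma(E)$; and the signature of the total space of a fiber bundle over a sphere vanishes (Chern--Hirzebruch--Serre multiplicativity, or simply because $H^*(E;\Q)$ has no nontrivial products coming from $H^*(E,F)$). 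That one-line reduction to signature is what's missing from your write-up.

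For part~(b), your claim that $B\Top(d+1) \simeq_\Q B\SO(d+1)$ is not justified and is in fact false in general: by work of Weiss, the relation $e^2 = p_n$ fails in $H^*(B\Top(2n);\Q)$ for large $n$, whereas it holds in $H^*(B\SO(2n);\Q)$, so the rational cohomology rings differ. Thus one cannot conclude that the topological disk bundle $\tilde E \to B$ rationally underlies a linear bundle, and your subsequent fiber-integration computation is unsupported. The paper's route is both simpler and correct: the Alexander trick gives a \emph{topological} oriented nullbordism $\tilde E$ of $E$; rational Pontryagin classes are defined for topological manifolds (Novikov), and rational Pontryagin numbers are invariants of oriented \emph{topological} bordism; hence $\hat A(E)=0$. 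This requires no statement about the rational homotopy type of $B\Top(d+1)$ at all. Your instinct to reach for the unstable $\Top/\OO$ comparison is precisely the pitfall the bordism-invariance argument is designed to avoid.
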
 

\begin{proof} 
For a) we observe that $F$ is framed in $E$, so that the rational Pontryagin classes $p_i(E)$ restrict to
$p_i(F)=0$. The long exact sequence of the pair $(E,F)$ implies that all $p_i(E)$
pull back from $H^*(E,F)$.

Denote by $D^{k+1}_\pm$ the upper and lower hemisphere of $S^{k+1}$, respectively. By
excision and the K\"unneth isomorphism we obtain isomorphisms of rational cohomology rings
\[ 
H^*(E,F)\cong H^*(E, E\vert_{D^{k+1}_+}) \cong H^*(E\vert_{D^{k+1}_-}, E\vert_{S^{k}}) \cong H^*(F)\otimes H^*(D^{k+1}_-, S^{k}).   
\]
But in $H^*(D^{k+1}_-, S^{k})$ all non-trivial products vanish,   and hence the same is true for $H^*(E,F)$.

So there are no non-trivial products of Pontryagin classes in $E$. It follows that the $\hat{A}$-genus of $E$ is a multiple 
of the signature of $E$, which is known to vanish in fiber bundles over spheres. 

For assertion b), we note that, given an  oriented bundle $E\to B$ with fiber $S^n$ and structure group the orientation
preserving homeomorphism
group of $S^n$, we obtain an oriented topological disc bundle $W\to B$ with $\boundary
W=E$ by  ``fiberwise coning off the spheres''. More precisely, the Alexander trick defines an embedding
$\Homeo^+(S^n)\to \Homeo^+(D^{n+1})$ by extending homeomorphisms $S^n \to S^n$ to homeomorphisms 
$D^{n+1} \to D^{n+1}$ radially over the cone over $S^{n}$, which is identified with $D^{n+1}$ in the standard way. 
This embedding splits the restriction homomorphism,
and $W$ is obtained by an associated bundle construction. 

Now \emph{rational} Pontryagin classes are defined for
topological manifolds and  the rational Pontryagin numbers are invariants of oriented topological bordism
in the usual way. In particular, $\hat A(E)=0$, as $E$ is a topological boundary. Because $\hat{A}(S^{n}) = 0$ the 
claim follows. 
\end{proof}

One of the referees pointed out that strongly $\hat{A}$-multiplicative fibers
different from spheres can be obtained using results of Farrell-Jones on rational homotopy types  of 
automorphism groups of hyperbolic manifolds.

\begin{prop} Let $m \geq 10$ and let $F$ be a connected oriented closed hyperbolic manifold of 
dimension $m$.  Then for all $0 \leq k \leq \frac{m-4}{3}$ the manifold $F$ is a
strongly $\hat{A}$-multiplicative fiber
 in degree $k$.  
\end{prop}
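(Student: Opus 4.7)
The plan is to combine an explicit fiber integration for the $\hat A$-genus with rigidity results of Farrell--Jones. Given an oriented smooth fiber bundle $F\to E\overset{\pi}{\to}B$ with $B$ closed oriented of dimension $k+1$, a choice of horizontal distribution splits the tangent bundle as $TE\cong T_v E\oplus \pi^*TB$, so that $\hat A(TE)=\hat A(T_v E)\cdot \pi^*\hat A(TB)$. The projection formula then gives
\[
\hat A(E) \;=\; \bigl\langle \hat A(TB)\cdot \pi_*\hat A(T_v E),\,[B]\bigr\rangle .
\]
Since the degree-zero component of $\pi_*\hat A(T_v E)\in H^*(B;\Q)$ equals $\hat A(F)\cdot 1_B$, it suffices to show that all higher-degree components vanish. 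Each such component is the pullback under the classifying map $\phi\colon B\to B\Diff^+(F)$ of a generalized Mumford--Morita--Miller class, i.e., a universal class in $H^{>0}(B\Diff^+(F);\Q)$ built from rational Pontryagin classes of the universal vertical tangent bundle. Because rational Pontryagin classes are topological invariants by Novikov's theorem, these universal MMM classes factor through $B\Homeo^+(F)$.

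I would then invoke Farrell--Jones' rigidity results for closed hyperbolic manifolds. Mostow rigidity gives $\Isom^+(F)$ finite, and topological rigidity (Farrell--Jones) yields $\pi_0\Homeo^+(F)\cong \Isom^+(F)$, hence finite. Farrell--Jones' computation of the rational homotopy of the topological automorphism space of a hyperbolic manifold -- combining their proof of the Borel conjecture for hyperbolic groups with the Igusa--Waldhausen pseudo-isotopy stability range -- yields $\pi_i\Homeo^+(F)\otimes \Q = 0$ for $1\le i\le (m-4)/3$. Together with the finiteness of $\pi_0$, a Serre spectral sequence argument for the fibration $B\Homeo_0^+(F)\to B\Homeo^+(F)\to B\Isom^+(F)$ gives $H^j(B\Homeo^+(F);\Q)=0$ for $1\le j\le (m-1)/3$. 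Since $\dim B = k+1\le (m-1)/3$, the map $\phi^*$ kills all positive-degree MMM classes, so $\pi_*\hat A(T_v E) = \hat A(F)\cdot 1_B$ and hence $\hat A(E) = \hat A(F)\cdot \hat A(B)$.

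The main obstacle is to extract the claimed rational cohomology vanishing for $B\Homeo^+(F)$ in the precise range $j\le (m-1)/3$. This requires the full Farrell--Jones package -- topological rigidity together with the relevant $K/L$-theoretic conjectures for hyperbolic groups -- combined with the Igusa--Waldhausen stability range for pseudo-isotopy, which is what ultimately pins down the bound $k\le (m-4)/3$ and the hypothesis $m\ge 10$. Once this input is granted, the remainder of the argument is a routine Chern--Weil computation.
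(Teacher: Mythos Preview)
Your argument is essentially correct and runs parallel to the paper's proof: both reduce the question to the rational acyclicity of $B\Homeo^+(F)$ (the paper writes $B\Top^+(F)$) in degrees $\le k+1$, and both extract this from the Farrell--Jones computation of the rational homotopy of $\Homeo^+(F)$ in the pseudo-isotopy stable range. One factual slip: you assert that topological rigidity gives $\pi_0\Homeo^+(F)\cong\Isom^+(F)$, hence finite. In fact, by \cite[Corollary 10.16]{Farrell_Jones}, $\pi_0\Homeo^+(F)$ is only a semidirect product of a countable elementary abelian $2$-group with $\Out(\pi_1(F))$, so it is typically infinite. This does not damage your argument, since the rational homology of $K(\pi_0\Homeo^+(F),1)$ still vanishes (torsion abelian extended by finite), but your fibration over $B\Isom^+(F)$ should be replaced by the one over $K(\pi_0\Homeo^+(F),1)$.

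Where the two proofs genuinely diverge is the final step. You compute $\hat A(E)$ directly by fiber integration, identify the positive-degree part of $\pi_*\hat A(T_vE)$ with pullbacks of generalized MMM classes, invoke Novikov to factor these through $B\Homeo^+(F)$, and kill them by the cohomology vanishing. The paper instead feeds the homology vanishing of $B\Homeo^+(F)$ into the Atiyah--Hirzebruch spectral sequence for rational oriented bordism to conclude that the classifying map $B\to B\Homeo^+(F)$ is rationally bordant to a constant; hence $E$ is topologically oriented-bordant to $B\times F$, and $\hat A(E)=\hat A(B)\hat A(F)$ by bordism invariance of rational Pontryagin numbers. Your Chern--Weil route is more explicit and isolates exactly which characteristic classes must vanish; the paper's bordism route is slightly cleaner in that it avoids any discussion of vertical tangent bundles or MMM classes and needs only the crude vanishing $H_{\le k+1}(B\Homeo^+(F);\Q)=0$.
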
 

\begin{proof} Let $B$ be a closed smooth  oriented manifold of dimension $k +1$ and $F \to E \to B$ be an oriented 
 smooth fiber bundle classified by a map $B \to B\Diff^+(F)$. We pass to the underlying topological bundle, which is classified
 by the composition $\phi : B \to B \Diff^+(F) \to B \Top^+(F)$.  In the following we abbreviate $\Top^+(F)$ by 
 $\Top$.
 
 By 
 \cite[Corollary 10.16.]{Farrell_Jones} $\pi_0(\Top)$ is a semidirect product 
   of a countable elementary abelian $2$-group and the group ${\rm Out}(\pi_1(F))$. The last group 
   is finite by the Mostow
rigidity theorem. Using the fact that the rational homology of finite and of abelian
 torsion groups vanishes (as each homology class is carried by a finitely
 generated subgroup) 
 we get $\widetilde{H}_*(K(\pi_0(\Top),1);\Q) = 0$. Hence the Leray-Serre spectral sequence for the orientation fibration $\widetilde{B\Top} \to B\Top  \to 
 K(\pi_0(\Top), 1)$ shows that $H_*(\widetilde{B\Top} ; \Q) \cong H_*(B\Top;\Q)$. 
 
  Now the rational Hurewicz theorem can be applied  to the simply connected space 
 $\widetilde{B\Top}$. Because 
 $\pi_s(\widetilde{B \Top}) \otimes \Q = \pi_s(B\Top) \otimes \Q= \pi_{s-1}(\Top) \otimes \Q = 0$ for $2 \leq s \leq \frac{m-4}{3}+1$
by \cite[Corollary 10.16.]{Farrell_Jones}, we hence
obtain $H_s(B\Top;\Q) = H_s(\widetilde{B \Top};\Q) = 0$ for $1 \leq s \leq \frac{m-4}{3} +1$

 With this information we go into the Atiyah-Hirzebruch spectral sequence for oriented bordism with rational coefficients 
 and conclude that rationally 
 the map $\phi$ is oriented bordant to the constant map $B \to B \Top$.
 This implies  that rationally the topological manifold $E$
 is oriented bordant to the product  $B \times F$. Hence 
$\hat{A}(E) = \hat{A}(B \times F) = \hat{A}(B) \cdot \hat{A}(F)$ as claimed. 
 \end{proof} 
 
 Closed hyperbolic spin manifolds do not admit metrics of positive scalar curvature \cite{GLIHES} and so this 
 construction will not be used further in our discussion.

\begin{theorem}\label{theo:main_geom}
  In the situation of part a) of Theorem \ref{main_general} assume in addition that the manifold $M$ is an 
 $\hat A$-multiplicative fiber in degree $k\geq 1$. Then $\pi_{k} ({\Riem}^+(M), g_0)$ contains elements all of whose multiples are geometrically significant.  The images of these elements under the Hurewicz map have infinite order. 
\end{theorem}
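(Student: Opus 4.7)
The plan is to show that the classes $c\in\pi_k({\Riem}^+(M), g_0)$ produced in the proof of part (a) of Theorem~\ref{main_general} themselves satisfy the stronger conclusions asserted here. That construction builds $c$ from the fiber bundle $F\to P\to S^k$ of Theorem~\ref{technical} (for which $\hat A(P)\neq 0$) via a parametrized Gromov-Lawson-Walsh fiberwise surgery, and it detects $c$ by a rational-valued index invariant
\[
\mu\colon \pi_k({\Riem}^+(M), g_0)\longrightarrow \Q
\]
which on an arbitrary pointed family $\gamma\colon (S^k,*)\to ({\Riem}^+(M), g_0)$ takes the value $\mu(\gamma)=\hat A(P_\gamma)$, where $P_\gamma$ is the closed $4n$-dimensional spin manifold built from $\gamma$ by the same parametrized surgery. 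By construction $P_c$ is spin-bordant to $P$, so $\mu(c)=\pm\hat A(P)\neq 0$.

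Next I would observe that $\mu$ is a group homomorphism and, being the $\hat A$-genus of a spin manifold functorially built out of the family of psc metrics, factors through the Hurewicz homomorphism
\[
\pi_k({\Riem}^+(M), g_0)\longrightarrow H_k({\Riem}^+(M);\Q)\longrightarrow\Q.
\]
This immediately yields the Hurewicz assertion: if the integral Hurewicz image of $c$ were torsion, then its rational image would vanish, forcing $\mu(c)=0$, a contradiction.

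For the geometric significance, the plan is to argue by contradiction: assume some nonzero multiple $Nc$ is not geometrically significant, so that it is represented by $t\mapsto\phi(t)^*g_0$ for some pointed $\phi\colon (S^k,*)\to (\Diff^+(M),\id)$. The principal technical step, and the main obstacle, will be to identify the closed spin manifold $P_{\phi^*g_0}$ associated to this family, up to spin bordism, with the clutched $M$-bundle $E_\phi\to S^{k+1}$ built from $\phi$ by gluing $M\times D^{k+1}_+$ to $M\times D^{k+1}_-$ along $\phi$. This should follow by choosing the Gromov-Lawson-Walsh fiberwise filling used to define $P_\gamma$ in a $\phi$-equivariant fashion on the diffeomorphism-induced locus (pulling back a fixed Walsh filling by $\phi(t)$), so that closing off the resulting family of psc cobordisms reduces to the standard clutching that produces $E_\phi$. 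Granting this identification, the $\hat A$-multiplicative fiber hypothesis gives $\mu(Nc)=\pm\hat A(E_\phi)=0$; but this contradicts $\mu(Nc)=N\mu(c)\neq 0$, so no nonzero multiple of $c$ is induced by diffeomorphisms, as required.
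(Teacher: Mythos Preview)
Your overall strategy matches the paper's: use an index-theoretic homomorphism $\pi_k(\Riem^+(M),g_0)\to\integers$ that is nonzero on the constructed class $c$, factors through homology, and vanishes on diffeomorphism-induced families when $M$ is an $\hat A$-multiplicative fiber. The paper packages this as Corollary~\ref{corol:geometric_significance}, which is an immediate consequence of Proposition~\ref{prop:Ahat_additivity}.

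There is, however, a genuine gap in your definition of the detecting invariant. You write $\mu(\gamma)=\hat A(P_\gamma)$ where $P_\gamma$ is ``the closed $4n$-dimensional spin manifold built from $\gamma$ by the same parametrized surgery''. No such functorial closed manifold exists for an \emph{arbitrary} family $\gamma$: the Walsh construction goes from a bundle $P\to S^k$ to a family of psc metrics, not the other way, and there is no canonical way to fill an arbitrary $\gamma$ to a closed manifold. The paper instead defines the invariant $\hat A_\pi(\gamma)$ directly as the APS index of the Dirac operator on $(S^k\times[0,1])\times M$ for any metric interpolating between the constant family $g_0$ and $\gamma$ (Section~\ref{APS}). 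This is well-defined by the gluing formula and homotopy invariance of the APS index, and the computation $\hat A_\pi(c)=\hat A(P)$ is carried out in Section~\ref{proof_main} by gluing on specific psc fillings.

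Once the invariant is corrected to $\hat A_\pi$, your ``principal technical step'' also becomes much simpler than you suggest. You propose to identify $P_{\phi^*g_0}$ with the clutched bundle $E_\phi\to S^{k+1}$ by an equivariant choice of Walsh filling; this is both ill-posed (since $P_\gamma$ is not defined) and unnecessary. The paper's Proposition~\ref{prop:Ahat_additivity} shows $\hat A_\pi(\phi^*e)=\hat A_{\rm Diff}(\phi)$ directly from the APS gluing formula: since the family $\phi^*g_0$ has positive scalar curvature throughout, one can glue the interpolating cylinder to a mapping-torus type construction and read off the $\hat A$-genus. The $\hat A$-multiplicative fiber hypothesis then gives $\hat A_{\rm Diff}(\phi)=0$, and your contradiction argument concludes as you wrote.
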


\begin{theorem} \label{main_moduli} In the situation of part a) of Theorem \ref{main_general}, 
\begin{itemize} 
  \item[a)]  if in addition $M$
  is a connected $\hat A$-multiplicative fiber in degree $k\geq 0$, then the map 
\[
    \pi_{k} ({\Riem}^+(M), g_0) \to \pi_k( {\Riem}^+(M) / \Diff_{x_0}(M) , [g_0]) 
\]
has infinite image. If $k \geq 1$, the image contains  elements of infinite order. 
  \item[b)] if in addition $M$ is a simply connected strongly $\hat{A}$-multiplicative fiber in degree 
  {$k \geq 0$}, the image of the map 
 \[
    \pi_{k} ({\Riem}^+(M), g_0) \to H_k( {\Riem}^+(M) / \Diff_{x_0}(M)) 
\]
 contains elements of infinite order. 
 \end{itemize} 
   \end{theorem}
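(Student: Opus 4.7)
The plan is to construct an $\hat{A}$-genus valued invariant on the moduli-space side that recovers $\hat{A}(P)$ on the classes produced by Theorems \ref{main_general} and \ref{technical}, and to use the $\hat{A}$-multiplicativity hypothesis on $M$ to show it descends to the quotient.

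For part (a), I would first define $\nu\colon \pi_k(\Riem^+(M), g_0)\to\Q$ by associating to each class $[\gamma]$ a closed $4n$-dimensional spin manifold $W(\gamma)$, well-defined up to spin bordism, and setting $\nu(\gamma) := \hat{A}(W(\gamma))$. The manifold $W(\gamma)$ is obtained by reversing the construction in the proof of Theorem \ref{main_general}: a family of psc metrics on $M$ parametrized by $S^k$ determines, through the Casson--Hatcher--Igusa--Walsh machinery, a spin bordism class of fiber bundles $F \to W(\gamma) \to S^k$, and by construction $\hat{A}(W(c))=\hat{A}(P)$ for classes $c$ coming from Theorem \ref{main_general}. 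Since Theorem \ref{technical} allows realizing infinitely many values of $\hat{A}(P)$, the invariant $\nu$ attains infinitely many values on suitable $c$.

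Next I would show that $\nu$ vanishes on the image of the Hitchin-type map $\alpha\colon\pi_k(\Diff_{x_0}(M),\id)\to \pi_k(\Riem^+(M),g_0)$, $\phi\mapsto[t\mapsto \phi(t)^* g_0]$. For such a class, $W(\alpha(\phi))$ is bordant to the mapping torus $M_\phi$ introduced before Corollary \ref{corol:non-mult-Ahat}, which in turn, by the bordism argument in that corollary's proof, is bordant to the clutching bundle $M\to P_\phi\to S^{k+1}$. The $\hat{A}$-multiplicativity of $M$ in degree $k$ forces $\hat{A}(P_\phi)=0$, so $\nu\circ\alpha=0$. The long exact sequence of the principal fibration $\Diff_{x_0}(M)\to\Riem^+(M)\to\Riem^+(M)/\Diff_{x_0}(M)$ identifies the kernel of $\pi_*$ with the image of $\alpha$, so $\nu$ descends to a map $\tilde\nu\colon \pi_k(\Riem^+(M)/\Diff_{x_0}(M),[g_0])\to\Q$. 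Since $\tilde\nu$ attains infinitely many values on the $\pi_*(c)$'s and (for $k\geq 1$) is a group homomorphism, one gets both infinite image and elements of infinite order, proving part (a).

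For part (b) the same strategy must be carried out at the level of Hurewicz images in $H_k$. Given an oriented bordism representative $f\colon B\to\Riem^+(M)/\Diff_{x_0}(M)$ of a homology class, I would cap off the pulled-back bundle $M\to f^*\mathcal{E}\to B$ using Walsh's fiberwise psc-surgery along the canonical section (which has trivial normal bundle via the observer condition on $\Diff_{x_0}$) to produce a closed $4n$-dimensional spin manifold whose $\hat{A}$-genus is the invariant. Simple connectivity of $M$ enables the fiberwise surgeries required to trivialize the fibers in families, and \emph{strong} $\hat{A}$-multiplicativity in degree $k$ is what guarantees that different choices of capping yield $\hat{A}$-genera that differ only by terms of the form $\hat{A}(\text{base})\cdot\hat{A}(M)$, making the invariant well-defined on bordism classes. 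Evaluated on the Hurewicz image of $c$, this yields $\hat{A}(P)\neq 0$, so the image is of infinite order in $H_k(\Riem^+(M)/\Diff_{x_0}(M))$. The main technical obstacle throughout, and the hardest step, is precisely the construction of $W$ and the analogous capping-off at the bordism level: both require the Casson--Hatcher--Igusa pre-fibration theory and Walsh's psc family surgery of Theorems \ref{technical} and \ref{main_general} to be re-run in reverse, in a bordism-invariant and additive fashion---in particular one must verify that $W(\gamma_1+\gamma_2)$ is spin bordant to $W(\gamma_1)\sqcup W(\gamma_2)$ so that $\nu$ is a homomorphism.
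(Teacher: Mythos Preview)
Your proposal has a genuine gap at its foundation: the invariant $\nu$ you describe is not well-defined. The Casson--Hatcher--Igusa--Walsh machinery in the paper goes in one direction only, from a fiber bundle $P\to S^k$ to a family of positive scalar curvature metrics on $M$. There is no canonical ``reverse'' that would take an arbitrary family $\gamma\colon S^k\to\Riem^+(M)$ and produce a well-defined spin bordism class of bundles $W(\gamma)\to S^k$; a generic family of psc metrics carries no such bundle data. Your parenthetical at the end concedes that additivity and bordism-invariance of $W(\gamma)$ are the ``hardest step'', but in fact the issue is more basic: the assignment $\gamma\mapsto W(\gamma)$ has not been defined at all.

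The paper avoids this entirely by using an \emph{analytic} invariant instead of a geometric one. The map $\hat A_\pi\colon \pi_k(\Riem^+(M),g_0)\to\integers$ is defined as the Atiyah--Patodi--Singer index of the Dirac operator on the cylinder $(S^k\times[0,1])\times M$, with the constant family $g_0$ on one end and the given family on the other; this is automatically additive and factors through spin bordism $\Omega^{\Spin}_k(\Riem^+(M))$ by the gluing formula for the APS index (Section~\ref{APS}). One then checks, via Proposition~\ref{prop:Ahat_additivity} and the $\hat A$-multiplicativity hypothesis, that $\hat A_\pi$ vanishes on families pulled back from $\Diff_{x_0}(M)$, so it descends along the long exact sequence as in your outline. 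The computation $\hat A_\pi(\phi')=\hat A(P)$ for the specific classes of Theorem~\ref{main_general} is a separate step done once by a direct gluing-and-bordism argument. For part (b), the descent to homology is handled by Proposition~\ref{corol:homology_modu}: given a null-bordism $Y$ in framed bordism of the moduli space, one pulls back the tautological $M$-bundle over $Y$, glues it to the trivial bundle $Y\times M$ along the cylinder, and uses strong $\hat A$-multiplicativity of $M$ to conclude the resulting closed manifold has $\hat A=0$. No fiberwise surgery or ``capping off'' is needed; simple connectivity of $M$ is used only to guarantee that every observer-diffeomorphism lifts canonically to a spin diffeomorphism.
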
 

Part b) of Theorem \ref{main_general} is the case $M = S^{4n-k-1}$ of the last result.

The elements in $\pi_k({\Riem}^+(M), g_0)$ constructed in \cite{CS, Hitchin} can in fact be obtained 
by pulling back $g_0$ along families in $\Diff_{x_0}(M)$. Hence these elements are not only not geometrically significant, 
but are even mapped to zero under the map in part a) of Theorem \ref{main_moduli}.

In  \cite{BHSW} 
the authors constructed, for arbitrary $k \geq
1$, non-zero elements in the $4k$-th homotopy groups of the full moduli space
${\Riem}^+(M)/ {\Diff}(M)$ of certain  
closed non-spin manifolds $M$ of odd dimension. These elements can be lifted
to ${\Riem}^+(M)/ \Diff_{x_0}(M)$, but not to ${\Riem}^+(M)$. 

It remains an interesting open problem whether there are examples of manifolds $M$ for which any of the statements 
of Theorem \ref{main_moduli} 
remain valid for  the full moduli space ${\Riem}^+(M) /
\Diff(M)$. 

{Kreck and Stolz \cite{Kreck_Stolz} constructed closed $7$-manifolds $M$ for which 
the moduli spaces ${\rm Sec}^+(M) / \Diff(M)$ of positive sectional curvature metrics 
are not path connected and closed $7$-manifolds for which the moduli spaces ${\rm Ric}^+(M) / \Diff(M)$ of positive Ricci curvature metrics 
have infinitely many path  components. It remains a challenging problem to construct 
non-zero elements in higher homotopy groups of (moduli) spaces of positive sectional and 
positive Ricci curvature metrics.} 

{\em Acknowledgements.} We thank Boris Botvinnik and Peter Landweber for helpful comments on an earlier version of this paper and the anonymous referees for carefully reading our manuscript. 
Their numerous thoughtful remarks  helped us to improve the exposition and 
to eliminate a blunder in the proof of
  Proposition \ref{prop_multiplicative_fiber} (b).

\section{APS index and families of metrics of positive scalar curvature} \label{APS}

If $(W,g_W)$ is a compact Riemannian spin manifold with
  boundary and with 
positive scalar curvature on the boundary (the Riemannian metric 
always assumed to be of product form near the boundary), then the Dirac
operator on 
$W$ with Atiyah-Patodi-Singer
boundary conditions is a Fredholm operator. Equivalently, if one attaches an
infinite half-cylinder $\boundary W\times [0,\infty)$ to the boundary and
extends the metric as a product to
obtain $W_\infty$, the Dirac operator on $L^2$-sections of the spinor bundle
on $W_\infty$ is a Fredholm operator. This uses invertibility of the boundary operator 
due to the positive scalar curvature condition. Both operators have the same
index, the APS index $\ind(D_{g_W})\in \Z$, which depends on the Riemannian metric on the
boundary.  

We collect some well known properties of this index. A detailed discussion can be found in \cite{Melrose}. 

\begin{enumerate} 
  \item If $g_W$ is of positive scalar curvature, we have $\ind(D_{g_W})=
    0$. This follows from the usual
    Weitzenb\"ock-Lichnerowicz-Schr\"odinger  argument.
  \item $\ind(D_{g_W})$ is invariant under deformations of the metric $g_W$ during which the metrics on the boundary maintain positive 
            scalar curvature. This follows from the homotopy invariance of the index of Fredholm operators. 
  \item The index can be computed by the APS index theorem 
  \begin{equation*}
   \ind(D_{g_W}) =\int_{W} \mathcal{\hat A}(W)
  - \frac{1}{2}\eta(\boundary W),
\end{equation*}
which involves the $\hat{A}$-form and the $\eta$-invariant of Atiyah-Patodi-Singer. 
  \item (Gluing formula) If $(V, g_V)$ is another Riemannian spin manifold and there is a spin preserving isometry $\psi \colon\partial V \to - \partial W$, then 
   $\ind(D_{g_V}) + \ind(D_{g_W}) = \hat{A}(V \cup_{\psi} W)$ where $\hat{A}$ is the usual $\hat{A}$-genus of the closed spin manifold $V \cup_{\psi} W$.  
  \end{enumerate}

\begin{defn} Let $M \rightarrow E \to B$ be a smooth fiber bundle of closed manifolds over a compact base manifold
$B$, let $g_B$ be a Riemannian metric on $B$, let 
$\mathcal{H}$ be a horizontal distribution on $E$ and $(g_b)_{b \in B}$ 
be a smooth family of  positive 
scalar curvature metrics on the fibers of $E$. Combining these data we obtain a Riemannian metric $g_E = g_B\oplus (g_b)$ on the total space $E$, where $g_B$ is 
lifted to horizontal subspaces of $E$ using the given distribution $\mathcal{H}$. Note that 
$(E,g_E) \to (B, g_B)$ is a Riemannian submersion. 

Using the O'Neill formulas \cite[Chaper 9.D.]{Besse}  there is an $\epsilon_0 > 0$ with 
the following property: For each $0 < \epsilon \leq \epsilon_0$ the metric $g_B \oplus (\epsilon \cdot g_b)$ 
on $E$ is of positive 
scalar curvature. We call such an $\epsilon_0$ an {\em adiabatic constant} for  the triple $(E, g_B, \mathcal{H})$. 
\end{defn} 

Assume that $B$ and $M$ are closed spin manifolds and $\phi\colon B \to \Riem^+(M)$ is a continuous map. This induces 
a family $(g_b)_{b\in B}$ of positive scalar curvature metrics on $M$, which can be assumed to be smooth after 
a small perturbation. 
After picking some metric $g_B$ on $B$ we can consider 
both the product metric $g_B \oplus g_0$, and
the metric $g_B\oplus ( g_b)_{b \in B}$ on $B\times M$.

We now multiply both fiber metrics on $B \times M \to B$ with adiabatic constants  so that the resulting metrics on $B \times M$ 
are of positive scalar curvature and denote these new metrics by the same symbols. Let 
$g = g_{(B \times [0,1]) \times M}$ be an arbitrary metric on 
$(B \times [0,1]) \times M$ interpolating between the metrics $g_B \oplus g_0$ on $(B \times 0) \times M$ 
and $g_B \oplus (g_b)$ on $(B \times 1) \times M$ (always of product form near the boundaries) and set 
\[
     \hat{A}_{\Omega}(\phi) = \ind(D_g) \in \Z, 
\]
the APS-index of the Dirac operator $D_g$  for the metric $g$ on the spin manifold $(B \times [0,1]) \times  M$. 
Note that $\ind(D_g)$ is equal to the relative index $i(g_B \oplus g_0\, , \, g_B \oplus (g_b))$ of Gromov-Lawson 
\cite[p. 329]{GL}.

The following facts show that the invariant $\hat{A}_{\Omega}$ is well defined on $\Omega^{\Spin}_k(\Riem^+(M))$ and 
hence defines a group homomorphism
\[
   \hat{A}_{\Omega} : \Omega^{\Spin}_k ( \Riem^+(M)) \to \Z.
\]

 \begin{itemize} 
  \item If we choose a different metric on $B$ or scale the metrics $g_0$ and $(g_b)$ by other adiabatic 
           constants, the index  $\ind(D_g)$ remains unchanged by property (2) above. 
  \item If we choose another metric  $g'$ on $(B \times [0,1]) \times M$ interpolating between the two 
           given metrics on the boundary,  the gluing formula implies 
      \[
           \ind(D_g)- \ind(D_{g'}) = \hat{A} ( M \times B \times S^1 ) = 0 
     \]
     using the fact that the $\hat{A}$-genus is multiplicative in products. 
   \item Now assume that $(W,g_W)$ is a compact Riemannian spin manifold of dimension $k+1$ with boundary $B$ and 
             that $(g_w)_{w \in W}$ is 
             a smooth family of positive scalar curvature metrics on $M$  restricting to a
               given family $(g_b)_{b \in B}$ over $B$. We also consider the 
             constant family $(g_0)$ on $M$. With respect to the given metric $g_W$ on $W$ we 
             scale both fiberwise metrics on $W \times M$ by 
             adiabatic constants. 
              
              By the gluing formula 
              and the fact that the APS-indices of both metrics on $W \times M $ vanish, we obtain (with 
             $g = g_{(B \times [0,1])\times M}$ as before)  
       \[
           \ind(D_{g})  = \ind (D_{g_W \oplus g_0}) + \ind(D_g) + \ind(D_{g_W \oplus (g_w)_{w\in W}} ) = \hat{A} (X) ,
      \]
      where
      \[
         X = (W \, \cup_{\partial W = B \times \{0\}} \, B \times [0,1]  \, \cup_{B \times \{1\} = \partial W}  \, W ) \times M.
      \] 
      Because $X$ is spin and $M$ admits a metric of positive scalar curvature we have $\hat{A}(X) = 0$ 
       and hence $\ind(D_g) = 0$, 
       as required. 
\end{itemize}       
      
Let $g_0 \in \Riem^+(M)$. Using composition with the canonical map $\pi_k(\Riem^+(M),g_0) \to \Omega^{Spin}_k(\Riem^+(M))$ we also define 
\[
    \hat{A}_{\pi} \colon \pi_k(\Riem^+(M),g_0) \longrightarrow \Omega^{Spin}_k(\Riem^+(M)) \stackrel{\hat{A}_{\Omega}}{\longrightarrow} \Z 
\]
for all $k \geq 0$. This is a group homomorphism for $k > 0$ and a map of sets for $k = 0$. 

The action of $\Diff(M)$ on $\Riem^+(M)$ by pull-back induces an action of $\pi_k(\Diff(M), \id)$ on $\pi_k(\Riem^+(M), g_0)$. For a pointed map $\phi \colon (S^k, *) \to (\Diff(M), \id)$ and a family $c = (g_t)_{t \in S^k}$ based at $g_0$ the pair $([\phi], [c])$ is mapped to the homotopy class
represented by the family $(\phi(t)^*g_t)_{t \in S^k}$, which we denote by $\phi^*c$ for short. The neutral element $e\in\pi_k(\Riem^+(M), g_0)$ is given by the constant family with value $g_0$.

\begin{prop}\label{prop:Ahat_additivity}
For a closed spin manifold $M$ and $[\phi]\in \pi_k(\Diff^+(M), \id)$ where $k\geq 1$ we have
\[
   \hat A_{\pi} (\phi^* e) = \hat A_{\rm Diff} (\phi)
\]
with $\hat A_{\rm Diff}(\phi)$ as defined before Corollary \ref{corol:non-mult-Ahat}. If $k=0$, the same equation holds provided $[\phi]$ is represented by a spin-preserving diffeomorphism.
\end{prop}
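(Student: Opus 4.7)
The plan is to apply the Atiyah-Patodi-Singer theorem to the cylinder $W=S^k\times[0,1]\times M$ used to compute $\hat A_\pi(\phi^*e)$, apply the Atiyah-Singer theorem to the closed spin manifold $M_\phi$, and observe that the gluing map $\psi$ is precisely the isometry required to make the bulk integrals agree and the boundary $\eta$-terms cancel.

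The key observation is that, by the very definition of the pullback of a metric, the fiberwise adjoint map $\psi\colon S^k\times M\to S^k\times M$ of $\phi$ is an isometry between $g_{S^k}\oplus g_0$ and $g_{S^k}\oplus(\phi(x)^*g_0)_x$, and it is orientation-preserving because $\phi$ lands in $\Diff^+(M)$. For $k\ge 1$, pointedness of $\phi$ exhibits $\psi$ as isotopic to $\id_{S^k\times M}$, so $\psi$ lifts uniquely to a spin-structure automorphism; for $k=0$, this spin-compatibility is precisely the additional hypothesis of the proposition. Consequently the APS interpolating metric $g$ on $W$ descends smoothly, along the identification $(x,f,0)\sim(\psi(x,f),1)$, to a Riemannian spin metric $\bar g$ on $M_\phi$.

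Atiyah-Singer applied to $(M_\phi,\bar g)$ gives
\[
\hat A_{\rm Diff}(\phi) \;=\; \hat A(M_\phi) \;=\; \int_{M_\phi}\hat A,
\]
while APS applied to $(W,g)$ gives
\[
\hat A_\pi(\phi^*e) \;=\; \ind(D_g) \;=\; \int_W \hat A \;-\; \tfrac{1}{2}\eta(\partial W).
\]
The two bulk integrals coincide because $g$ and $\bar g$ agree off a set of measure zero. The two boundary components of $W$ are spin-isometrically identified by $\psi$ and carry opposite induced orientations from $W$, so their equal $\eta$-invariants enter with opposite signs and $\eta(\partial W)=0$. Combining these three statements yields the claimed identity.

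The only delicate step is the spin- and orientation-compatibility of $\psi$ with the chosen structures on $S^k\times M$: for $k\ge 1$ this follows automatically from the isotopy of $\psi$ to the identity, and for $k=0$ it is precisely our added hypothesis. Once this is secured, the remainder is the bookkeeping of two standard index theorems with matching boundary data.
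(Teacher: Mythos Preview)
The claim that pointedness of $\phi$ makes $\psi$ isotopic to $\id_{S^k\times M}$ is false, and in fact cannot hold in the generality you need: if it did, then $M_\phi$ would be diffeomorphic to $S^k\times S^1\times M$ and hence satisfy $\hat A(M_\phi)=0$ for every $\phi$, forcing $\hat A_{\rm Diff}\equiv 0$ and contradicting Corollary~\ref{corol:non-mult-Ahat}. Pointedness only guarantees that each $\phi(x)$ lies in the identity component of $\Diff^+(M)$; it says nothing about the isotopy class of the adjoint $\psi$ in $\Diff(S^k\times M)$. For $k\ge 2$ the weaker conclusion you actually need---that $\psi$ lifts to the spin principal bundle---does hold (the paper notes this before Corollary~\ref{corol:non-mult-Ahat}), but for $k=1$ even this can fail. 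The paper therefore argues differently: it first treats the spin-preserving case via the gluing formula, then observes that for $k\ge 1$ both sides of the identity define group homomorphisms $\pi_k(\Diff^+(M),\id)\to\Q$ and that some power of $\psi$ is always spin-preserving; the identity for that power then yields the identity for $\phi$ itself.

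There is a second, more technical gap: with the product horizontal distribution on both copies of $S^k\times M$, the map $\psi(x,m)=(x,\phi(x)(m))$ is \emph{not} an isometry between $g_{S^k}\oplus g_0$ and $g_{S^k}\oplus(\phi(x)^*g_0)$, because differentiating $\phi(x)(m)$ in the $x$-direction produces a horizontal--vertical cross term. (Fibrewise, $\phi(x)$ is an isometry $(M,\phi(x)^*g_0)\to(M,g_0)$, which is also the reverse of what you wrote.) Hence your interpolating metric $g$ does not literally descend to a smooth metric on $M_\phi$. This is repairable---alter the horizontal distribution on one end, or simply invoke the APS gluing formula directly rather than trying to push $g$ down---but it does mean the argument as written does not go through.
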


\begin{proof}
In the case that  the map $S^k \times M \to S^k \times M$ adjoint to $\phi$ is spin preserving, the assertion follows from the gluing formula for the APS index. This shows in particular the last claim of the proposition. 

Moreover we observe that if $k\geq 1$, both sides of the asserted equation define  group 
homomorphisms $\pi_k(\Diff^+(M) , \id) \to \Q$ --- for the left hand side we use that it is given by the composite of group homomorphisms
\[\pi_k(\Diff^+(M),\id) \to \pi_k(\Riem^+(M), g_0) \xrightarrow{\hat A_\pi} \Z\subset \Q\]
where the first map is induced by the action of the diffeomorphism group on the base-point $g_0\in\Riem^+(M)$. The proof is concluded by the fact that each diffeomorphism $S^k \times M \to S^k \times M$ has a power which is spin preserving.
\end{proof} 

{We obtain the following corollaries for 
a closed spin manifold $M$ which is an $\hat{A}$-multiplicative fiber in degree $k$.}

\begin{cor}\label{corol:geometric_significance} If, {for $k \geq 1$},  an element $c\in\pi_k(\Riem^+(M), g_0)$ is not geometrically significant then $\hat A_{\pi}(c)=0$.
\end{cor}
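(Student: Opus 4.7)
The plan is to chain together the definitions with Proposition \ref{prop:Ahat_additivity}. Since $c$ is not geometrically significant and $k \geq 1$, by Definition \ref{def:geom_sign} there exists a pointed continuous map $\phi\colon (S^k,*)\to (\Diff^+(M),\id)$ such that $c$ is represented by $t\mapsto \phi(t)^* g_0$. In the notation preceding Proposition \ref{prop:Ahat_additivity}, this exactly says $c = \phi^* e$, where $e \in \pi_k(\Riem^+(M), g_0)$ is the homotopy class of the constant family at $g_0$.

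Next I invoke Proposition \ref{prop:Ahat_additivity} (applicable because $k\geq 1$ and $M$ is spin), which yields
\[
   \hat A_\pi(c) \;=\; \hat A_\pi(\phi^* e) \;=\; \hat A_{\rm Diff}(\phi).
\]
Finally, since $M$ is by hypothesis an $\hat{A}$-multiplicative fiber in degree $k$, Definition \ref{def_multiplicative_fiber} and the parenthetical remark therein state that the homomorphism $\hat A_{\rm Diff}\colon \pi_k(\Diff^+(M),\id)\to \rationals$ is identically zero. Hence $\hat A_{\rm Diff}(\phi) = 0$, and combining with the previous display gives $\hat A_\pi(c) = 0$.

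There is essentially no obstacle here: the corollary is a direct concatenation of the preceding proposition with the vanishing statement built into the definition of $\hat A$-multiplicative fiber. The only subtle point is that we need $k\geq 1$ to use the clean form of Proposition \ref{prop:Ahat_additivity} without the auxiliary spin-preservation hypothesis, which matches the hypothesis of the corollary.
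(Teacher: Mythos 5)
Your proof is correct and takes exactly the approach the paper leaves implicit: unwind Definition~\ref{def:geom_sign} to write $c=\phi^*e$, apply Proposition~\ref{prop:Ahat_additivity} (using $k\geq 1$ and $M$ spin) to get $\hat A_\pi(c)=\hat A_{\rm Diff}(\phi)$, and then invoke the vanishing of $\hat A_{\rm Diff}$ noted in Definition~\ref{def_multiplicative_fiber} under the standing hypothesis that $M$ is an $\hat A$-multiplicative fiber in degree $k$.
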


By considering the long exact sequence in homotopy associated to the fibration 
\[
     \Riem^+(M)\hookrightarrow  \Riem^+(M) / \Diff_{x_0} (M) \to B\Diff_{x_0}(M) 
\]
we also have

\begin{cor}\label{corol:observer_non_triv} {Let $M$ be a connected. Then for 
$k \geq 1$}  the map $\hat A_{\pi} \colon \pi_k (\Riem^+(M), g_0)\to \mathbb{Z}$ factors through the image of the projection-induced map
\[\pi_k (\Riem^+(M), g_0) \to \pi_k (\Riem^+(M)/\Diff_{x_0}(M), [g_0]).\]
\end{cor}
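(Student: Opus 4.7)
The plan is to show that the kernel of the projection-induced map is annihilated by $\hat{A}_\pi$, which is equivalent to the factorisation claim. Since $\Diff_{x_0}(M)$ acts freely on $\Riem^+(M)$, the map
\[\Riem^+(M) \to \Riem^+(M)/\Diff_{x_0}(M)\]
is a principal $\Diff_{x_0}(M)$-bundle, so we have a fibration
\[\Diff_{x_0}(M) \to \Riem^+(M) \to \Riem^+(M)/\Diff_{x_0}(M)\]
(base-points suppressed). The associated long exact sequence identifies the kernel of
\[\pi_k(\Riem^+(M), g_0) \to \pi_k(\Riem^+(M)/\Diff_{x_0}(M), [g_0])\]
with the image of the orbit map
\[\pi_k(\Diff_{x_0}(M), \id) \to \pi_k(\Riem^+(M), g_0), \quad [\phi] \mapsto [\phi^* e],\]
where $e$ denotes the constant family at $g_0$.

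So let $c\in\pi_k(\Riem^+(M), g_0)$ be in this kernel and write $c=[\phi^*e]$ for some $[\phi]\in\pi_k(\Diff_{x_0}(M), \id)$. Since the observer condition $D_{x_0}\phi=\id$ forces $\phi$ to be orientation preserving, we have $\Diff_{x_0}(M)\subset \Diff^+(M)$, so $[\phi]$ defines a class in $\pi_k(\Diff^+(M), \id)$. By Proposition \ref{prop:Ahat_additivity} (applicable because $k\geq 1$), we obtain
\[\hat{A}_\pi(c) = \hat{A}_\pi(\phi^* e) = \hat{A}_{\mathrm{Diff}}(\phi).\]

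Finally, the assumption that $M$ is an $\hat{A}$-multiplicative fiber in degree $k$ means exactly that the homomorphism $\hat{A}_{\mathrm{Diff}}\colon \pi_k(\Diff^+(M),\id)\to \rationals$ vanishes (see Definition \ref{def_multiplicative_fiber} and the parenthetical remark there). Consequently $\hat{A}_\pi(c)=0$, which shows that $\hat{A}_\pi$ factors through the image of the projection-induced map. The only subtlety is the identification of the kernel in the long exact sequence with orbit-map classes $\phi^*e$ and the verification that $\Diff_{x_0}(M)$ sits inside $\Diff^+(M)$; both are routine.
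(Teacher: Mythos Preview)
Your proof is correct and follows essentially the same approach as the paper. The paper invokes the long exact sequence of the fibration $\Riem^+(M)\hookrightarrow \Riem^+(M)/\Diff_{x_0}(M)\to B\Diff_{x_0}(M)$, while you use the principal bundle $\Diff_{x_0}(M)\to \Riem^+(M)\to \Riem^+(M)/\Diff_{x_0}(M)$ directly; these yield the same identification of the kernel with the image of $\pi_k(\Diff_{x_0}(M))$, and your subsequent appeal to Proposition~\ref{prop:Ahat_additivity} together with the $\hat A$-multiplicativity hypothesis is exactly what the paper leaves implicit.
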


In particular, the group $\pi_k (\Riem^+(M)/\Diff_{x_0}(M), [g_0])$ contains 
an element of infinite order if the map $\hat A_{\pi}$ is non-zero in degree $k\geq 1$. As the isotopy classes of spin-preserving diffeomorphisms form a finite-index subgroup of $\pi_0 ( \Diff(M)) $, we also deduce:

\begin{corollary}\label{cor:case_pi_0} {For $k = 0$} 
the  set $\pi_0 (\Riem^+(M)/\Diff(M))$ is infinite if the image of $\hat
A_{\pi}$ in degree $k=0$ is infinite.
\end{corollary}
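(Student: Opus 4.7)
The plan is to reduce the problem to a pigeonhole argument. Path components of $\Riem^+(M)/\Diff(M)$ are exactly the orbits of the $\pi_0(\Diff(M))$-action on $\pi_0(\Riem^+(M))$. Since the subgroup $G \subseteq \pi_0(\Diff(M))$ of spin-preserving isotopy classes has finite index, say $n$, each full orbit is a union of at most $n$ many $G$-orbits. Thus, if I can show that $\hat A_\pi$ is constant on $G$-orbits, then each $\pi_0(\Diff(M))$-orbit carries at most $n$ distinct values of $\hat A_\pi$, and an infinite image of $\hat A_\pi$ forces infinitely many $\pi_0(\Diff(M))$-orbits.

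The crucial step is therefore the invariance claim $\hat A_\pi([\phi^*g]) = \hat A_\pi([g])$ for every spin-preserving $\phi\in\Diff^+(M)$ and every $[g]\in\pi_0(\Riem^+(M),g_0)$. I would decompose this as
\[
\hat A_\pi([\phi^*g]) = \hat A_\pi([\phi^*g_0]) + \hat A_\pi([g])
\]
and show the first summand on the right vanishes. For the vanishing, Proposition \ref{prop:Ahat_additivity} applies to spin-preserving $\phi$ even for $k=0$, giving $\hat A_\pi([\phi^*g_0]) = \hat A_{\rm Diff}(\phi) = \hat A(M_\phi)$; here $M_\phi$ is an oriented fiber bundle over $S^1$ (in fact a disjoint union of mapping tori) with fiber $M$, hence $\hat A(M_\phi)=0$ by the standing assumption that $M$ is an $\hat A$-multiplicative fiber in degree $0$.

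For the additivity, pick interpolations $h_1$ on $W_1 = [0,1]\times M$ from $g_0$ to $\phi^*g_0$ and $h_2$ on $[0,1]\times M$ from $g_0$ to $g$, and transport $h_2$ to a metric $\tilde h_2 := (\id\times \phi)^* h_2$ on $W_2 = [1,2]\times M$, which interpolates $\phi^*g_0$ to $\phi^*g$. Since $\id\times \phi$ is a spin-preserving diffeomorphism, $\ind(D_{\tilde h_2})=\ind(D_{h_2}) = \hat A_\pi([g])$. Concatenating yields a smoothable metric $h = h_1 \cup \tilde h_2$ on $[0,2]\times M$ interpolating $g_0$ and $\phi^*g$, whose APS index equals $\hat A_\pi([\phi^*g])$ by the well-definedness of $\hat A_\pi$ recalled in Section \ref{APS}. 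Splitting the APS formula
\[
\ind(D_h) = \int_{W_1 \cup W_2}\hat A \;-\; \tfrac12\bigl(\eta(\phi^*g)-\eta(g_0)\bigr)
\]
over $W_1$ and $W_2$, the intermediate $\eta$-terms at $\phi^*g_0$ cancel, giving $\ind(D_h) = \ind(D_{h_1}) + \ind(D_{\tilde h_2})$, which is the displayed additivity.

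The main obstacle is the bookkeeping of this cylinder-additivity of the APS index, but this is handled entirely by the APS index formula and the well-definedness argument already assembled in Section \ref{APS}; no new analytic input is needed. Combined with the pigeonhole observation of the first paragraph, this proves the corollary.
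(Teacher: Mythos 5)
Your proof is correct and matches the paper's intended argument: the paper merely remarks that spin-preserving isotopy classes form a finite-index subgroup of $\pi_0(\Diff(M))$ and leaves the pigeonhole step, the reduction to $\hat A_\pi([\phi^*g_0])=0$ via Proposition \ref{prop:Ahat_additivity} and the $\hat A$-multiplicative fiber hypothesis, and the additivity of the relative index to the reader. You have filled in exactly these implicit details, so this is essentially the same approach.
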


In passing we also note the following consequence. 

\begin{cor} \label{insignificant} Let $F$ be a manifold as in Remark \ref{alpha}  admitting a positive scalar curvature metric $g_0$ 
and appearing as fiber in $F \to P \to S^{k+1}$. Then $\pi_k(\Riem^+(F), g_0)$ contains elements of infinite order 
(and infinitely many elements for $k = 0$), 
which are not geometrically significant. 
\end{cor}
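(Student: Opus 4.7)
The plan is to realise the sought-after classes as pull-backs of $g_0$ along the clutching diffeomorphisms that assemble $P$, and to detect them via the APS-type invariant $\hat A_\pi$ from Section \ref{APS}.

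First I would let $\phi\colon (S^k,*)\to(\Diff^+(F),\id)$ be the clutching map for $F\to P\to S^{k+1}$. The resulting class
\[
   c := \phi^* e \in \pi_k(\Riem^+(F), g_0),\qquad t\mapsto \phi(t)^* g_0,
\]
is not geometrically significant by Definition \ref{def:geom_sign}. Next, the index-zero surgery trick appearing in the proof of Corollary \ref{corol:non-mult-Ahat} exhibits a spin bordism between $M_\phi$ and $P$, yielding
\[
   \hat A_{\Diff}(\phi) \;=\; \hat A(M_\phi) \;=\; \hat A(P) \;\neq\; 0.
\]

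For $k \ge 1$, I would invoke Proposition \ref{prop:Ahat_additivity} (applicable because $F$ is spin and carries a positive scalar curvature metric) to conclude that $\hat A_\pi(c) = \hat A_{\Diff}(\phi)\neq 0$. Since $\hat A_\pi\colon \pi_k(\Riem^+(F),g_0)\to\Z$ is a group homomorphism in positive degree, the class $c$ must have infinite order.

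For $k=0$ the map $\phi$ reduces to a single diffeomorphism $\psi := \phi(-1)$ whose mapping torus satisfies $\hat A(T_\psi)=\hat A(P)\neq 0$. The only mild obstacle is that the $k=0$ clause of Proposition \ref{prop:Ahat_additivity} requires the diffeomorphism to be spin preserving. I would circumvent this by choosing $N\ge 1$ with $\psi^N$ spin preserving (possible because spin-preserving classes form a finite-index subgroup of $\pi_0(\Diff^+(F))$) and considering the iterates $\psi^{Nn}$ for $n\ge 1$. Since $T_{\psi^{Nn}}$ is an $n$-fold connected cyclic cover of $T_{\psi^N}$, one has $\hat A(T_{\psi^{Nn}}) = Nn\cdot \hat A(T_\psi)$, and these values are pairwise distinct. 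Proposition \ref{prop:Ahat_additivity} therefore separates the classes $[(\psi^{Nn})^* g_0]\in \pi_0(\Riem^+(F),g_0)$, producing infinitely many different elements, each of which is not geometrically significant by construction.
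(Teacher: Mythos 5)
Your proof is correct and is essentially the proof the paper leaves implicit when it states the corollary ``in passing'': take the clutching class $c=\phi^*e$, which is not geometrically significant by Definition~\ref{def:geom_sign}, and detect it with $\hat A_\pi$ via Proposition~\ref{prop:Ahat_additivity} together with the bordism $\hat A_{\Diff}(\phi)=\hat A(P)\neq 0$ from the proof of Corollary~\ref{corol:non-mult-Ahat}. Your $k=0$ workaround (pass to a spin-preserving power $\psi^N$ and use $\hat A(T_{\psi^{Nn}})=Nn\cdot\hat A(P)$ to separate components) is also sound, and is the natural way to obtain infinitely many distinct, non-geometrically-significant classes in $\pi_0(\Riem^+(F))$.
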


\section{Fiberwise Morse theory and metrics of positive scalar curvature}

Let $B$ be a closed   smooth connected manifold and let $W$ be a smooth
$n+1$-dimensional compact manifold with two
connected boundary components $M_0 = \partial_0 W$ and $M_1 = \partial_1
W$. We assume throughout this section that $\dim W = n+1 \geq 6$.
Let 
\[
    W \hookrightarrow E \stackrel{\pi}{\to} B 
\]
be a smooth fiber bundle with structure group ${\Diff} (W; M_0, M_1)$
consisting of diffeomorphisms $W \to W$ mapping $M_i$ to $M_i$ for $i=0,1$ and
preserving pointwise some collar neighborhoods of $M_0$ and $M_1$ in $W$. 
Then the total space $E$ has again two boundary components $\partial_i E$ which are total spaces of fiber bundles 
$M_i  \hookrightarrow \partial_i E \to B$. 

In the following discussion, the notion {\em fiberwise} in connection with a mathematical object defined on $E$ 
is a shorthand for the fact that this object is defined or constructed on each fiber $E_t = \pi^{-1}(t) \subset E$, $t \in B$, 
 but still defines a global object on $E$. In this terminology, 
the {\em  fiberwise tangent bundle of $E$} is equal to the {\em vertical tangent bundle} 
\[
    T^{vert} E = \ker (T\pi \colon TE \to TB) \to E 
\]
and a {\em fiberwise Morse function} is a smooth function $E \to \R$ which restricts to a Morse  function on each fiber $E_t$. 

Theorem \ref{Walsh_Existence} below states when a fiberwise metric of positive scalar curvature
on $\partial_0 E$, i.e.~a smooth metric on $T^{vert} (\partial_0 E)$ that restricts to a metric of positive scalar curvature on each fiber $(\partial_0 E)_t$,   can be extended
to a fiberwise metric of positive scalar curvature on $E$. This  is 
a fibered version of the well known fact, due to Gromov-Lawson, Schoen-Yau and Gajer,  that if $M_1 \hookrightarrow W$ is a $2$-equivalence (inducing a  bijection on $\pi_0$ and $\pi_1$ 
and a surjection on $\pi_2$), then a positive scalar curvature metric on $M_0$ can be
extended to $W$, cf. \cite[Theorem 3.3]{StolzICM}.  This uses a handle decomposition of $W$ induced by a Morse function. 
 In a fibered situation the situation is more complicated, because  it is in general not possible 
to construct a fiberwise Morse function on $E \to B$.  We handle this situation by combining Igusa's theory of 
fiberwise generalized Morse functions \cite{Igusa2} with Walsh's generalization of the Gromov-Lawson surgery 
method to a fibered situation \cite{Walsh}. 

The following discussion summarizes  \cite[\S 3]{Igusa2} in a form needed for our purpose. Let $M$ be a compact smooth manifold. Recall that a smooth function 
\[
    f \colon M \to \R 
\]
is a {\em generalized Morse function} if the gradient of $f$ is transverse to
$\partial M$ and if
each critical point $p \in M$ of $f$ is either of Morse or birth-death type. By definition, in the first case
there are local coordinates $(x_1 , \dots, x_i, y_1, \dots, y_j)$ around $p$ so that $p$ has coordinates $(x(p), y(p)) = (0,0)$ and $f$ can 
locally be written as
\[
     f(x,y)  = f(p) - x_1^2 - \dots - x_i^2 + y_1^2 + \dots + y_j^2 \, . 
\]
In the second case there are local coordinates $(x, y_1, \dots, y_k, z_1, \dots, z_l)$ around $p$ so that $p$ has coordinates $(x(p), y(p), 
z(p)) = (0,0,0)$ and  $f$ can locally be written in the 
form 
\[
    f(x,y,z)  = f(p)  + x^3 - y_1^2 - \dots - y_k^2 + z_1^2 + \dots + z_l^2 \, . 
\]
The type of the critical point as well as the numbers $i$ and $k$ are uniquely determined by $p$ and $f$. In the first case (of a Morse singularity)  
we say that $p$ is a critical point of {\it index} $i$, in the second case (of a birth-death singularity) we say that $p$ is a critical point of 
{\em index} $k +1/2$. This is motivated by the fact that  in a parametrized
family of Morse functions birth-death singularities typically arise in the situation when
two critical points of index $k$ and $k+1$ cancel each other along a
one-parameter sub-family. The normal form for such a family, parametrized by $t \in (-\epsilon, \epsilon)$,  is
$f_t(x,y,z)=f_0(p) +x^3-tx - y^2+z^2$ with a birth-death singularity at $t = 0$. Note that in
\cite{Igusa2} the index of a birth-death singularity as above  
is defined to be $k$.

\begin{defn}
By a {\em generic family of generalized Morse functions} on the bundle $E \to B$ we mean a smooth function 
\[
   F  \colon E \to [0,1]
\]
with the following properties: 
\begin{itemize} 
  \item $\partial_0 E = F^{-1}(0)$, $\partial_1 E = F^{-1}(1)$.
  \item The function $F$ is fiberwise generalized Morse, i.e. for each $t \in B$ the restriction 
  \[
       f_t = F |_{E_t} \colon E_t \to [0,1]
  \]
   is a generalized Morse function. 
  \item The birth-death points in each $f_t$ are {\em generically unfolded},
    i.\,e. they represent a transversal intersection 
    of the fiberwise $3$-jet defined by $F$ and the subspace of fiberwise
    birth-death $3$-jets inside the bundle of 
 fiberwise $3$-jets of smooth functions $E \to \R$, compare \cite[\S 2]{Igusa2}.
 \end{itemize} 
\end{defn}
This is essentially a global version of \cite[Definition 3.1]{Igusa2} on a
non-trivial (as opposed to a product) bundle $E$. However, we do not require
``linear independence of the birth directions of the different birth-death
points of a fixed fiber $E_t$''. Instead we follow
\cite[Definition 4.8]{Walsh} at this point.

For a smooth function $F \colon E \to [0,1]$ we denote the subset of fiberwise
critical points in $E$ by $\Sigma(F)$. We remark that if
$F$ is a generic family of generalized Morse functions then the subsets
 \[
      A_1(F)  \subset E, \quad A_2(F) \subset E
 \]
 of fiberwise Morse and birth-death singularities are smooth submanifolds of
 $E$ of dimension $\dim B$ and $\dim B - 1$, respectively.
The manifold $A_2(F)$ is  closed and $\Sigma(F) = A_1(F) \cup A_2(F)$. 
Furthermore,  $\pi$ restricts to an immersion $\pi|\colon A_2(F) \to B$. For these
statements, compare \cite[Lemma 3.3]{Igusa2}. The additional property of
self-transversality of $\pi|\colon A_2(F)\to B$ obtained in \cite[Lemma
3.3]{Igusa2} cannot be assumed in our situation because we do not
insist on linear independence of different birth-directions in a fixed fiber. Also we 
remark that in \cite{Walsh} the sets $A_1(F)$ and $A_2(F)$  
are denoted $\Sigma^0$ and $\Sigma^1$, respectively.

 We state the following fundamental existence result. 
 
 \begin{prop} \label{existence} Let $\dim W > \dim B$. Then the generic families of generalized Morse functions $F\colon E \to [0,1]$ form
 a nonempty open subset of $C^{\infty}(E, [0,1])$ equipped with the $C^{\infty}$-topology. 
 \end{prop}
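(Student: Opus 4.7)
The plan is to treat openness and nonemptiness separately, as they are of quite different difficulty. Openness follows from standard jet-theoretic arguments, while nonemptiness relies on Igusa's fiberwise Morse theory.

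For openness, I would verify that each of the three defining conditions is $C^\infty$-open in $C^\infty(E,[0,1])$, using compactness of $E$. The boundary condition $F^{-1}(0)=\partial_0 E$, $F^{-1}(1)=\partial_1 E$ (with the required product form near the boundary) is already $C^0$-open. The condition that each fiberwise critical point of $F$ is Morse or birth-death can be rephrased as: the fiberwise $3$-jet prolongation $j^3_{\rm vert}F\colon E\to J^3_{\rm vert}(E,\R)$ avoids a closed ``bad'' stratum $\Sigma_{\rm bad}\subset J^3_{\rm vert}(E,\R)$, consisting of fiberwise critical jets of Hessian corank at least two, together with those of corank one whose cubic form vanishes on the kernel of the Hessian. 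By compactness of $E$, this is $C^3$-open on $F$, hence $C^\infty$-open. Finally, the generic-unfolding condition is transversality of $j^3_{\rm vert}F$ to the codimension-$(\dim W+1)$ birth-death stratum, which is $C^\infty$-open by the standard Thom--Mather argument on compact $E$. Intersecting the three open conditions yields openness.

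For nonemptiness, I would first construct some smooth $F_0\colon E\to[0,1]$ satisfying the boundary conditions, by a partition-of-unity argument subordinate to preassigned collars of $\partial_0 E$ and $\partial_1 E$. The task is then to deform $F_0$ to a generic family of generalized Morse functions. Naive fiberwise jet-transversality suffices only for $\dim B\le 1$: for $\dim B\ge 2$ the stratum $\Sigma_{\rm bad}$ above has codimension at most $\dim E$ in the fiberwise $3$-jet bundle, so a generic $j^3_{\rm vert}F$ must intersect it. The deformation is therefore carried out using Igusa's fiberwise Morse theory \cite{Igusa2}, which eliminates the unwanted corank-two and degenerate-birth-death singularities by local modifications supported on small disks in $E$; the hypothesis $\dim W>\dim B$ is precisely what provides enough fiber room for these cancellations. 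I expect this existence step for $\dim B\ge 2$ to be the main obstacle, as it depends essentially on the Cerf--Igusa machinery rather than on routine transversality arguments.
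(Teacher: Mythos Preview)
Your proposal is correct and follows the same two-step strategy as the paper: openness via standard jet-transversality arguments, and nonemptiness via Igusa's deep existence theorem for fiberwise generalized Morse functions, after which a small perturbation (again jet transversality) secures the generic-unfolding condition. The only discrepancy is bibliographic: for the existence step the paper invokes the main result of \cite{Igusa1} (the connectivity of the space of framed functions, which yields a fiberwise generalized Morse function over any base of dimension $<\dim W$) rather than \cite{Igusa2}.
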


 \begin{proof} The main result of \cite{Igusa1} implies that the space of smooth functions $E \to [0,1]$ which restrict to 
 generalized Morse functions on each fiber is nonempty in $C^{\infty}(E, [0,1])$. The assertion now follows from 
 multijet transversality in the same way the corresponding \cite[Lemma
 3.2]{Igusa2} follows. 
 \end{proof} 
 
 The next proposition shows that we have good control of the index of critical points (of Morse or birth-death type) of $f_t$, $t \in B$, for a generic family of generalized Morse 
 functions $F\colon E \to [0,1]$. Following \cite[Section 4]{Walsh} we call
 such a family {\em admissible} if all critical points are of index $\leq n-2$
 for each $t$. (Recall that $\dim W = n+1$, which is in accordance with the
 convention in \cite{Igusa2}).
 
 \begin{prop} \label{cancellation} Assume that $\dim W  \geq  2 \dim B + 5$ and
 that the inclusion $M_1 \hookrightarrow W$ is $2$-connected. Then the space
 of admissible generic families of generalized Morse functions $E \to [0,1]$
 is a nonempty open subset of $C^{\infty}(E, [0,1])$. 
  \end{prop}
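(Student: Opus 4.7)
The openness assertion is routine: Proposition~\ref{existence} shows that generic families of generalized Morse functions form an open subset of $C^\infty(E,[0,1])$, and for such a family the fiberwise critical set $\Sigma(F) = A_1(F) \cup A_2(F)$ is a smooth submanifold on which the index function is locally constant. Hence the additional constraint that all indices be $\leq n-2$ is open as well.

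For non-emptiness, my plan is to start with an arbitrary generic family $F_0 \colon E \to [0,1]$, which exists by Proposition~\ref{existence}, and then eliminate by a descending induction the critical points of index $n+1$, $n$, and $n-1$. The hypothesis that $M_1 \hookrightarrow W$ is $2$-connected, transported through handle duality (i.e.\ by considering $1 - F_0$), says exactly that each fiber $(E_t,(\partial_0 E)_t,(\partial_1 E)_t)$ admits a handle presentation from $\partial_0 E_t$ using only handles of index $\leq n-2$. Fiber-by-fiber, this is Smale's classical handle cancellation theorem in the range where the pair is sufficiently connected. The task is therefore to make this fiberwise cancellation work coherently in families: for each fiberwise critical point $p$ of $f_t$ of index $i \in \{n-1,n,n+1\}$, one finds a critical point $q$ of index $i-1$ connected to $p$ by a single gradient-like trajectory, and cancels the pair through a generic homotopy that introduces a birth-death singularity of index $i - 1/2 \leq n - 3/2$, which is admissible.

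The heart of the argument, and its main obstacle, is the parametrized Whitney trick needed to arrange the canceling trajectories between $p$ and $q$ over all $t \in B$ simultaneously. Excess intersections of ascending and descending spheres in the fibers have to be removed by embedded Whitney disks, and in the family the ambient dimension available for these disks is $\dim W + \dim B$; the hypothesis $\dim W \geq 2 \dim B + 5$ is precisely what guarantees that the relevant Whitney disks can be embedded generically, disjointly from each other and from the already-organized critical loci, without forcing uncontrolled new singularities. This is the setting of Hatcher's and Igusa's parametrized handle calculus \cite{Igusa2}, and the adaptation carried out by Walsh in \cite[Section~4]{Walsh} applies verbatim here. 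Combining this with the fact that the space of generic families is itself open (so the small perturbations needed during the cancellation process can be kept inside it) produces the required admissible generic family.
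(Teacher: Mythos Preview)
Your overall strategy---invoke parametrized handle cancellation in the style of Hatcher--Igusa to remove the fiberwise critical points of index $\geq n-1$---is exactly the right tool, and it is what the paper uses. The paper, however, treats this step as a black box: it cites Igusa's version of Hatcher's two-index theorem \cite[Corollary~VI.1.4]{Igusa2} (with $i=0$, $j=n-1$, $k=\dim B$) to produce a fiberwise generalized Morse function all of whose critical points have index $\leq n-2$, and only \emph{afterwards} applies multijet transversality to make the family generic. Openness of the index condition then ensures the small perturbation to genericity does not reintroduce bad indices.

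Your attempt to sketch the content of the two-index theorem has a concrete error. You write that cancelling a critical point of index $i$ against one of index $i-1$ ``introduces a birth-death singularity of index $i-1/2 \leq n-3/2$, which is admissible.'' For $i\in\{n,n+1\}$ the inequality $i-1/2 \leq n-3/2$ is simply false, and even for $i=n-1$ the resulting index $n-3/2$ exceeds $n-2$, so it is \emph{not} admissible. More to the point, the conclusion of the two-index theorem is that the high-index critical loci are removed from the family \emph{entirely}; no persistent birth-death strata of those indices remain in the endpoint family. Your phrasing suggests you expect such strata to survive, which would defeat the purpose.

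Two smaller issues: the reference to \cite[Section~4]{Walsh} is misplaced---Walsh \emph{uses} admissible families to build positive scalar curvature metrics but does not prove the two-index theorem; the correct citation for this step is Igusa. And the order of operations matters: cancellation is not a $C^\infty$-small perturbation, so the openness of generic families does not let you ``stay generic'' during the process. The paper's order (cancel first, then perturb to generic) avoids this issue cleanly.
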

 
 \begin{proof} This follows from the proof of Proposition \ref{existence} by applying a variant of Hatcher's two-index theorem,
see \cite[Corollary VI.1.4]{Igusa2} (with $i=0$, $j = n-1$ and $k = \dim B$ in the notation of {loc.~cit.}) 
before appealing to multijet transversality. Here we note that the subset of
generalized Morse
functions on $W$ whose Morse critical points have indices at most $n-2$ and
birth-death critical points have indices at most $n-2\frac{1}{2}$ (resp. $n-3$ in the
notation of \cite{Igusa2}) is open in the set of all $C^\infty$-maps.  
\end{proof}

We note the following addendum which is proved by the same methods.

\begin{add} \label{addendum} If in the situation of Proposition \ref{cancellation} the inclusion $M_0 \hookrightarrow 
W$ is also $2$-connected, then the space of generic families of generalized Morse functions $f \colon E \to [0,1]$ with 
the property that both $f$ and $1-f$ are admissible is a nonempty open subset of $C^{\infty}(E, [0,1])$. 
\end{add}

For our later discussion we need convenient coordinates around the fiberwise singular set  $\Sigma(F)$ for generic families of generalized Morse functions $F$, 
cf. \cite[Definition 4.3]{Walsh}. Because the normal bundles of 
$A_1(F) \subset E$ and $A_2(F) \subset E$ are in general nontrivial, we need
to work in a twisted setting.

Choose a fiberwise Riemannian  metric $g^{vert}$ on $E$, i.\,e. a smooth section of $(T^{vert} E)^* \otimes (T^{vert} E)^*$ that defines 
a Riemannian  metric on each  $E_t$.  At a critical point $p \in E_t$ of $f_t$ we obtain 
an orthogonal (with respect to $g^{vert}$) splitting  $V^+_p \oplus V^-_p \oplus V^0_p$   into positive, negative and null eigenspaces of the (fiberwise) Hessian of $f_t \colon E_t \to \R$ on $T^{vert}_p E$. 
The space $V^0_p$ is zero if $p \in A_1(F)$ and one-dimensional if $p \in
A_2(F)$. Hence we obtain vector bundles
\[
   V^{\pm}  \to A_1(F) , \quad V^{\pm} \to A_2(F) , \quad V^0 \to A_2(F)  
\]
so that $V^+ \oplus V^-$ has structure group $O(i) \times O(j)$ on path components of $A_1(F)$ and 
$V^+ \oplus V^-\oplus V^0$ has structure group $O(k) \times O(l) \times SO(1)$ on 
path components of $A_2(F)$.  The numbers $i$ and $k$ refer to indices of critical points as before and $V^0$ is a 
one-dimensional real bundle.  These
indices can of course vary on different path components of  $A_1(F)$ and $A_2(F)$. Note that $V^0 \to A_2(F)$ carries a preferred orientation pointing  in the direction of increasing $f_t$ for $t \in A_2(F)$. 

In order to describe the behavior of critical points around a birth-death singularity $p \in E_t$  in a coordinate-independent way 
it is useful to choose an additional  Riemannian  metric $g_B$ on $B$ and a
horizontal distribution on $TE$ together with 
the lift $g^{hor}$ of $g_B$ to this horizontal distribution. Together with the metric $g^{vert}$ we hence obtain a Riemannian  submersion 
\[
     (E, g^{vert} \oplus g^{hor}) \to (B, g_B) \, . 
\]
Fixing this, for small enough $\epsilon > 0$ we obtain  canonical embeddings 
\[
   \xi \colon A_2(F) \times (-\epsilon, \epsilon)   \to E 
\]
restricting to the inclusion $A_2(F) \hookrightarrow E$ on $A_2(F) \times \{0\}$ and so that for each $p \in A_2(F)$ the restricted map 
$\xi \colon \{ p \} \times (-\epsilon, \epsilon)   \to E$ describes the unique horizontal curve of unit speed which maps to a geodesic in $B$ orthogonal 
to $D_p\pi(T_p  A_2(F)) \subset T_{\pi(p)} B$ and points into the birth direction of the birth-death singularity $p$.

For such an $\epsilon > 0$ consider the vector bundle of rank $n+1$ 
\[
      V^{\epsilon} =  {\rm pr}_1^* (V^0 \oplus V^- \oplus V^+) \to A_2(F) \times (-\epsilon, \epsilon) 
\]
i.e. the pull-back of $T^{vert}E|_{A_2(F)}$ to $A_2(F) \times (-\epsilon, \epsilon)$ along the projection 
${\rm pr}_1 \colon A_2(F) \times (-\epsilon, \epsilon) \to A_2(F)$. The vector bundle $V^{\epsilon}$ 
splits in a canonical way as 
\[
    V^{\epsilon} = V^{\epsilon,0} \oplus V^{\epsilon,-} \oplus V^{\epsilon,+} \, . 
\]
We equip $V^{\epsilon}$ with the pull-back metric of the induced metric on $T^{vert}E |_{A_2(F)}$ and consider for $\delta > 0$ the 
disc bundle $D_{\delta}(V^{\epsilon}) \to A_2(F) \times (-\epsilon, \epsilon)$. An {\em extended tubular neighborhood} around 
$A_2(F)$ is a smooth embedding 
\[
    \phi \colon D_{\delta}(V^{\epsilon}) \hookrightarrow E 
\]
so that the diagram 
\[
   \xymatrix{
    D_{\delta}(V^{\epsilon})  \ar[d]    \ar[r]^-{\phi} & E \ar[d]^{\pi} \\
    A_2(f) \times (-\epsilon, \epsilon) \ar[r]^-{\pi \circ \xi} & B  }
\]
commutes. This is exactly the global version (on non-product bundles) of the
normal form defined in \cite[Appendix, Lemma 3.5]{Igusa2}. 

After choosing some local isometric trivialization 
\[
    E^{vert}|_U \cong U \times \R \times \R^k \times \R^l
\]
over  an open subset $U \subset A_2(F)$, where 
the isomorphism on the $\R$-factor is orientation preserving, we obtain a corresponding local 
isometric trivialization 
\[
     V^{\epsilon}|_{U \times (-\epsilon, \epsilon)} \cong(  U \times (-\epsilon, \epsilon) ) \times \R \times \R^{k} \times \R^l 
\]
so that we can work with local coordinates $(p,s,x,y,z)  \in A_2(F) \times (-\epsilon, \epsilon) \times \R \times \R^k \times \R^l$. For the following definition compare \cite[Appendix, Definition 3.9]{Igusa2}. 

\begin{defn} \label{normal} Let $g^{vert}$ be a fiberwise Riemannian  metric on
  $E$. We call a generic family of generalized Morse
functions $F \colon E \to [0,1]$ in {\em normal form} with respect to
$g^{vert}$, if there are numbers $\sigma, \tau > 0$,
$\delta > 3\sigma$ and $\epsilon > \sigma^2$ 
and an extended tubular neighborhood 
\[
    \phi \colon D_{\delta}(V^{\epsilon})  \hookrightarrow E 
\]
(with respect to some horizontal metric $g^{hor}$ as before) so that 
\begin{itemize} 
   \item $\phi$ is fiberwise isometric,  
   \item for all $\{ (p, s, x,y,z) \in D_{3 \sigma}(V^{\epsilon}) ~| ~
|s| \leq \sigma^2\}$ the function $F \circ \phi$ is given by 
\[
  (s, x,y,z) \mapsto F(p ) + g_s(x) - \| y\|^2 + \| z \|^2 
\]
where 
\[
    g_s \colon \R \to \R \, , ~ s \in [-\sigma^2, \sigma^2]
\]
is the smooth family of functions of \cite[Appendix, Lemma 3.7]{Igusa2} (with the chosen $\sigma$ and $\tau$),  
\item for each critical point
\[
   q \in A_1(F) \setminus \phi \big( \{ (p, s, x,y,z) \in D_{3 \sigma}(V^{\epsilon}) ~| ~ |s| \leq \frac{2}{3} \sigma^2\} \big) 
\]
there is a $g^{vert}_{\pi(q)}$ isometric embedding $\mu
\colon D_{\tau}^{n+1} \to E_{\pi(p)}$ mapping
the midpoint of the disc $D_{\tau}^{n+1} \subset \R^{n+1}$ of radius $\tau$ (which is equipped with the standard metric) to $q$ and so that $F$ is given by
\[
    F(\mu(z_1, \ldots, z_{n+1})) = F(q) + \sum_{i=1}^{n+1} \pm z_i^2 \, , 
\]
where $(z_1, \ldots, z_{n+1})$ are the standard coordinates of $\R^{n+1}$. Furthermore, the embedding $\mu$ can be assumed 
to vary smoothly on a contractible neighborhood of $q$  in $A_1(F)$.  
\end{itemize} 
\end{defn} 

We obtain the following existence result. 

\begin{prop} \label{generic_existence} Assume
that $\dim W  \geq  2 \dim B + 5$ and that the inclusion $M_1 \hookrightarrow W$ is $2$-connected.  Then there is a fiberwise
Riemannian  metric $g^{vert}$ on $E$ and a generic family of generalized Morse
functions $F \colon E \to [0,1]$ which is admissible and in normal form with 
respect to $g^{vert}$. 
\end{prop}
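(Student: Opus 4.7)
The strategy is to start with an admissible generic family of generalized Morse functions provided by Proposition \ref{cancellation}, choose an auxiliary fiberwise metric, and then simultaneously modify the metric (and, if necessary, perturb $F$ within the open set of admissible generic families) so as to install the required normal forms around the two strata $A_1(F)$ and $A_2(F)$ of the fiberwise critical locus.

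Fix an admissible generic family $F\colon E\to[0,1]$ from Proposition \ref{cancellation} and an arbitrary fiberwise metric $g^{vert}_0$, together with a horizontal distribution and metric $g_B$ on $B$ as used to define extended tubular neighborhoods. Around the closed submanifold $A_2(F)\subset E$, one applies the appendix to \cite{Igusa2}, specifically the parametric normal form of Lemma 3.5 together with the standard one-parameter family $g_s$ from Lemma 3.7, to produce, for a suitable choice of constants $\sigma,\tau,\delta,\epsilon$, an extended tubular neighborhood $\phi\colon D_\delta(V^\epsilon)\hookrightarrow E$ in which $F\circ\phi$ already has the prescribed form $F(p)+g_s(x)-\|y\|^2+\|z\|^2$ on the region $\{|s|\le\sigma^2\}$. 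Define $g^{vert}$ on the image of $\phi$ to be the push-forward under $\phi$ of the Euclidean fiberwise metric on $V^\epsilon$; then $\phi$ is fiberwise isometric by construction. The splitting $V^\epsilon=V^{\epsilon,0}\oplus V^{\epsilon,-}\oplus V^{\epsilon,+}$ (with possibly non-trivial $O(k)\times O(l)\times SO(1)$ structure) is respected because the normal form is block-diagonal.

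The Morse critical points $q\in A_1(F)$ lying outside the above tubular neighborhood are isolated in their fibers and vary smoothly along the $\dim B$-dimensional manifold $A_1(F)$. On each contractible neighborhood $U\subset A_1(F)$, the parametric Morse lemma (applied fiberwise) gives a smooth family of fiberwise diffeomorphisms bringing $F$ into quadratic normal form in local charts; shrinking $\tau$ so these charts are disjoint from the birth-death tube, one uses these charts to define $g^{vert}$ as the push-forward of the standard disc metric on $D^{n+1}_\tau$. Finally, extend $g^{vert}$ arbitrarily over the complement of the union of all these local neighborhoods via a partition of unity; this does not affect either normal form, which is prescribed only in small neighborhoods of $\Sigma(F)$.

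The main subtlety is that the birth-death stratum $A_2(F)$ is not self-transverse (we dropped that condition from \cite[Definition 3.1]{Igusa2}), so distinct extended tubular neighborhoods around different sheets of $A_2(F)$ may a priori overlap. This is handled exactly as in \cite[Definition 4.8]{Walsh}: one shrinks $\sigma,\delta,\epsilon$ so that, on the compact fiberwise critical set, the neighborhoods of $A_2(F)$ and of the isolated Morse points disentangle into pairwise disjoint tubes; compactness of $B$ and of $\Sigma(F)\cap E_t$ makes this choice possible. Any resulting $C^\infty$-small perturbation of $F$ needed to achieve the exact equality $F\circ\phi=F(p)+g_s(x)-\|y\|^2+\|z\|^2$ and to match the quadratic form at the isolated Morse points preserves admissibility and genericity by the openness statements of Propositions \ref{existence} and \ref{cancellation}. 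This produces the desired pair $(g^{vert},F)$.
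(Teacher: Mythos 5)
Your proof sketch follows essentially the same route as the paper's: the paper simply cites Igusa's Appendix Theorem 3.10 (together with his Lemmas 3.5 and 3.7, which you also invoke) and remarks that Igusa's deformations of $f$ and $g^{vert}$ are coordinate-independent and hence apply verbatim to the non-trivial bundle $W \hookrightarrow E \to B$, while you reconstruct the content of that argument — push-forward metrics on tubular neighborhoods of $A_1(F)$ and $A_2(F)$, shrinking to disentangle non-self-transverse sheets, and appealing to openness of admissibility.

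One small point to tighten: you assert that the remaining adjustment of $F$ after installing the normal coordinates is a $C^\infty$-small perturbation, and that openness of the admissible generic families (Propositions \ref{existence} and \ref{cancellation}) therefore saves genericity. Near Morse points the parametric Morse lemma gives exact quadratic coordinates so no perturbation of $F$ is needed at all; near birth-death points, matching $F\circ\phi$ to $F(p)+g_s(x)-\|y\|^2+\|z\|^2$ (with the fixed model family $g_s$ from \cite[Appendix, Lemma 3.7]{Igusa2}) is a genuine deformation that is not automatically small in $C^\infty$, and one really needs Igusa's argument that the deformation can be carried out through admissible generic families rather than a bare openness appeal. This is precisely the content of \cite[Appendix, Theorem 3.10]{Igusa2} that the paper leans on, so the gap is one of attribution rather than substance.
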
 

\begin{proof} The proof is along the lines of the proof of \cite[Appendix, Theorem 3.10]{Igusa2}. Here we notice that in Igusa's work this 
proof is given for the case when the bundle $E = W \times B \to B$ is trivial, but under the general assumption 
that $E^{vert}|_{A_1(F)}$ and $E^{vert}|_{A_2(F)}$ are non-trivial. Because the deformations 
of $f$ and $g^{vert}$ are carried out {loc.~cit.} in a coordinate-independent way, 
the same proof can be used to treat the general case of the nontrivial bundle $W \hookrightarrow E \to B$ appearing in 
our discussion. 
\end{proof}

Combining this result with  \cite[Theorem 1.4]{Walsh}  we obtain

\begin{thm} \label{Walsh_Existence} Assume
that $\dim W  \geq  2 \dim B + 5$ and that the inclusion $M_1 \hookrightarrow W$ is $2$-connected. Also assume 
that there exists a fiberwise metric of positive scalar curvature on a fiberwise
collar neighborhood of $\partial_0 E \subset E$ which is fiberwise of product
form on this collar  
neighborhood. Then this 
metric can be extended to a fiberwise metric of positive scalar curvature on
$E \to B$ which is fiberwise of product form 
on a fiberwise collar neighborhood of $\partial_1 E$ in $W$ . 
\end{thm}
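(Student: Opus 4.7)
The plan is to combine Proposition \ref{generic_existence} with Walsh's parametrized Gromov--Lawson surgery theorem \cite[Theorem 1.4]{Walsh}, so the proof is essentially an assembly argument.

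First, I would invoke Proposition \ref{generic_existence} to choose a fiberwise Riemannian metric $g^{vert}$ on $E$ together with a generic family of generalized Morse functions $F\colon E\to[0,1]$ which is admissible and in normal form with respect to $g^{vert}$. The hypotheses $\dim W \geq 2\dim B + 5$ and the $2$-connectedness of $M_1\hookrightarrow W$ are exactly what the proposition needs. Admissibility guarantees that every fiberwise critical point has index at most $n-2$, equivalently coindex at least three when viewed from $\partial_1 E$. This is the precise condition under which the Gromov--Lawson surgery technique can be carried out without destroying positivity of the scalar curvature. The normal form provides the rigid local coordinates along $A_1(F)$ and $A_2(F)$ in which the explicit surgery formulas of Walsh are written.

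Next I would interpret the data $(F,g^{vert})$ as a fiberwise handle decomposition of $E$ relative to $\partial_0 E$. Walsh's theorem takes as input such an admissible generic family of generalized Morse functions in normal form, a product-form fiberwise psc metric on a fiberwise collar of $\partial_0 E$, and then produces, by a parametrized version of the Gromov--Lawson--Gajer extension across fiberwise handles (including births and deaths of cancelling handle pairs), a fiberwise psc metric on all of $E$ which agrees with the given one near $\partial_0 E$ and is of product form near $\partial_1 E$. The coindex-three condition ensures that each individual handle attachment can be carried out with psc preserved, while the normal form near birth-death points matches the local model under which Walsh arranges a smooth cancellation of psc metrics across these critical values.

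The main obstacle, and the place where one needs to be careful, is checking that the nontriviality of the bundles $V^{\pm}\to A_i(F)$ and $V^0\to A_2(F)$ (twisted normal data that does not arise in Walsh's original product-bundle setup) causes no problem. However, both the normal-form condition in Definition \ref{normal} and Walsh's surgery construction are formulated in a fiberwise, coordinate-free fashion using orthogonal frames, and the local trivializations of the disc bundles $D_\delta(V^\epsilon)$ provided by the normal form suffice to apply the construction patch by patch; the compatibility on overlaps is automatic because the metric modifications depend only on the fiberwise geometry in the tubular neighborhoods. The product-form condition near $\partial_1 E$ falls out because Walsh's extension is arranged to be a product near the ``top'' of each handle, and by assumption no critical values accumulate at $1$.
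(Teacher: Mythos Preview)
Your proposal is correct and follows the same route as the paper: invoke Proposition~\ref{generic_existence} to obtain an admissible generic family of generalized Morse functions in normal form, then feed this into Walsh's parametrized surgery theorem \cite[Theorem 1.4]{Walsh}. Your added remarks about the twisted normal data and why the construction extends from product bundles to nontrivial bundles are a welcome elaboration, but the argument is otherwise identical to the paper's.
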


\section{Homotopy classes of positive scalar curvature metrics} 
\label{proof_main}

Using the invariant $\hat{A}_{\Omega}$ from Section \ref{APS}  it is now rather straightforward to prove Theorems
\ref{main_general}, 
\ref{theo:main_geom} and \ref{main_moduli}, assuming Theorem
\ref{technical}.

We fix $k \geq 0$ and
pick a bundle $P \to S^{k}$ whose total space is of non-zero $\hat{A}$-genus and 
of dimension $4n$, where we also fix $l=2$ and
require $4n\ge 3k+5$ (i.\,e. the fiber dimension satisfies $4n - k \geq 2k + 5$). Note that this and Theorem \ref{technical} determines
the lower bound $N(k)$ on $n$ for our main theorems. We assume 
conditions (\ref{item:high_connect}) and (\ref{item:section}) of Theorem
\ref{technical}. 

Let $s \colon S^{k} \to P$ be a smooth section with trivialized  normal 
bundle. Inside the resulting embedding of $S^k\times D^{4n-k}$ we construct
an  embedding
\[
   \rho \colon  S^{k} \times \left( D^{4n-k}  \cup_{D^{4n-k-1}\times 0 }   D^{4n-k-1}\times
   [0,1]  \cup_{D^{4n-k-1} \times 1} D^{4n-k}\right)  \hookrightarrow P .
\]
 Removing the interiors of the 
two copies  of $S^k\times D^{4n-k}$  yields a fibration 
\[
    W \hookrightarrow E \to  S^{k} 
\]
where $E$ has two boundary components    $\partial_0 E$ and $\partial _1 E$
each of which 
may be identified (by the map $\rho$) with the total space of the trivial
fibration  $S^{k} \times  S^{4n-k-1} \to S^{k}$. Furthermore the bundle $E$  
 comes with a fiberwise embedding of $S^k\times D^{4n-k-1}\times
[0,1]$ meeting $\partial_0 E$ and $\partial_1 E$ in 
$S^k\times D^{4n-k-1}\times 0$ and $S^k\times D^{4n-k-1}\times 1$,
respectively. A typical fiber $W$ is displayed in Figure 2.

We use this embedding to form the fiberwise
connected sum of $E \to S^k$ with the trivial bundle $S^k \times (M\times [0,1])\to S^k$ (the interval $[0,1]$ being embedded 
vertically in each fiber of $E$) to obtain a new fiber 
bundle 
\[
    W_M \hookrightarrow  E_M \to S^k
\]
where each fiber $W_M$ is a bordism from $M$ to $M$. Figure 2  illustrates how a
typical fiber of this bundle emerges from the connected sum of $W$ and $M \times [0,1]$. 

\begin{figure}\label{fig:single_fiber}
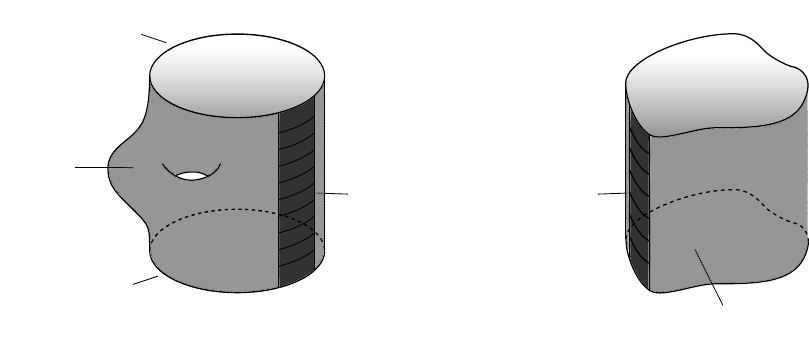
\caption{}
\end{figure}

To keep our notation short we drop the index $M$ from now on 
and call this new bundle $W \to E \to S^k$ again.

If $l = 2$ and the fiber dimension satisfies $4n-k \geq  2k+5$, we can apply
Theorem \ref{Walsh_Existence} so as to extend the constant 
fiberwise positive 
scalar curvature metric $g_0$ on $\partial_0 E$ to a fiberwise 
positive scalar curvature metric on $E$ which is fiberwise of product form
near $\partial_1 E$. 

In view of the fact that $\partial_1 W$ is identified with the trivial bundle
$S^{k} \times M \to S^{k}$ we obtain a new family 
\[
    \phi \colon S^{k} \to {\Riem}^+(M) 
\]
of positive scalar curvature metrics on $M$. 

Unfortunately this need not be in the path component of $g_0$ so that we modify our 
construction as follows.

Let $F \colon E \to [0,1]$ be the generic family
of generalized Morse functions in normal form that was used for the
construction of the fiberwise  
metric of positive scalar curvature on $E$. Then  the image of the set of
birth-death singularities 
\[ 
  \pi(A_2(F)) \subset S^{k} 
\]
is an immersed submanifold of dimension $k-1$ and hence not equal to
$S^{k}$. Let $t \in S^{k}$ be a point not lying in this image. The
singularities of the  restriction $f_t \colon E_t \to [0,1]$ are only of Morse
type.  
In view of Addendum \ref{addendum} we can assume that they have not only coindex at least $3$ but also index at
least $3$. 
We set $W=E_t$, consider the constant family of Morse
functions
\begin{equation*}
   F^{const} \colon  S^{k} \times W  \to     [1,2]   \, ,  \qquad
           (c,w)  \mapsto  2- f_t(w) 
\end{equation*}
and the resulting family of Morse functions
\[
   F \cup  F^{const}   \colon E \cup_{\partial_1 E = S^{k} \times \partial_1 W } ( S^{k} \times W ) \to [0,2]  
\]
on the bundle $E$ together with an upside down copy of the trivial bundle
$S^{k} \times W\to S^{k}$.  
Using \cite[Theorem 1.4]{Walsh}, we can extend the family of positive
scalar curvature metrics on $E$ to the new fiber bundle 
\[
   E' = E \cup_{\partial_1 E = S^{k} \times \partial_1 W} ( S^{k}
  \times W)  \to S^{k} \, . 
\]

Note that the restriction of the Morse function $F \cup F^{const}$ to the fiber $E'_t$ over $t$ is the smooth function 
\[
   W \cup_{\partial_1 W = \partial_1 W} W \to [0,2]
\]
given by $f_t$ on the first and by $2 - f_t$ on the second copy of $W$. It therefore induces a handle 
decomposition in which each handle of index $i$ in the first copy of $W$ corresponds to a handle 
of coindex $i$ in the second copy (in a canonical way). By
  the results of Walsh  the positive
scalar curvature metric
obtained on $E'_t$ by the method from \cite{Walsh}, which on the single fiber $E'_t$ restricts to the classical construction 
from \cite{GL}, can be assumed to coincide on the two copies of $W$ (glued together at $\partial_1 W$).
In particular we can assume that the resulting metric on $E'_t$  restricts to $g_0$ on both ends.

This means that the family of positive scalar curvature metrics 
\[
   \phi' \colon S^k \to {\Riem}^+(M) 
\]
on the upper boundary component $\partial_1 E'$ is in the path component of $g_0$.

\begin{prop}
  For this family of  metrics $\phi'$ we get 
  \[
       \hat A_\pi(\phi') = \hat A(P)\neq 0, 
  \]
 where $P$ is the total space of the bundle from Theorem \ref{technical}. 
\end{prop}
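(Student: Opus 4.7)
The plan is to invoke the APS gluing formula to express $\hat{A}_\pi(\phi')$ as the $\hat{A}$-genus of a closed spin $4n$-manifold $\bar Z$, and then to identify $\hat{A}(\bar Z)$ with $\hat{A}(P)$ via spin-bordism invariance. First, by definition $\hat{A}_\pi(\phi') = \ind D_{g_1}$, the APS index of the Dirac operator on the spin cylinder $X_1 := (S^k \times [0,1]) \times M$ equipped with an adiabatically scaled metric $g_1$ interpolating between $g_B \oplus g_0$ and $g_B \oplus \phi'$. Glue $X_1$ to $E'$ along the $\phi'$-boundary; since $E'$ carries the total-space PSC metric built from the fiberwise Walsh metric and an adiabatic horizontal component, Lichnerowicz yields $\ind D_{E'}=0$. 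Then glue in a trivial PSC cylinder $X_0 := (S^k \times [0,1]) \times M$ with constant metric $g_B \oplus g_0$, bridging the two remaining $g_0$-boundary components to produce a closed spin $4n$-manifold $\bar Z$. Two applications of the APS gluing formula give
\[
 \hat{A}(\bar Z) \;=\; \ind D_{g_1} + \ind D_{E'} + \ind D_{X_0} \;=\; \hat{A}_\pi(\phi').
\]

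It remains to show $\hat{A}(\bar Z) = \hat{A}(P)$ by spin-bordism invariance of $\hat A$. Topologically, $\bar Z$ is a bundle over $S^k$ obtained by closing up $E' = E_M \cup_{\partial_1 E_M}(S^k\times W_M)$ via the bridging cylinder. I would reduce $[\bar Z]$ to $[P]$ in $\Omega^{\Spin}_{4n}$ modulo classes with vanishing $\hat A$-genus through three successive simplifications. First, replace the trivial factor $S^k\times W_M$ by $S^k\times(M\times[0,1])$: this is legitimate because the PSC metric on $E_M$ restricts to a PSC metric on $W_M=E_{M,s}$ coinciding with $g_0$ on both ends, so $W_M\cup_\partial(M\times[0,1])$ admits a PSC metric and the correction term has $\hat A=0$. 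Second, undo the fiberwise connected sum used to build $E_M$ from $E$ and $S^k\times(M\times[0,1])$ via the standard pair-of-pants null-bordism of the connected sum; the correction factors through $M$, and $\hat A(M)=0$ since $M$ is PSC. Third, the resulting closed manifold is, up to an $\hat A$-trivial factor $S^k\times M\times S^1$ (killed by $\hat A(S^1)=0$), the closure $\widehat E$ of $E = P\setminus 2(S^k\times D^{4n-k})$ obtained by identifying its two $S^k\times S^{4n-k-1}$ boundaries by the identity; and by a standard surgery identity, $[\widehat E] = [P] + [S^k\times S^{4n-k-1}\times S^1]$ in $\Omega^{\Spin}_{4n}$, whose second summand again has $\hat A=0$ by the $S^1$-factor. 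Hence $\hat A(\bar Z)=\hat A(P)\ne 0$.

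The main obstacle is the bordism bookkeeping in this second step: each simplification must be realized by an explicit spin cobordism whose closed correction term either factors through $M$ (so $\hat A$ vanishes by Lichnerowicz) or through $S^1$ (so $\hat A$ vanishes by multiplicativity), or itself admits a PSC metric. The uniform tool throughout is the pair-of-pants null-bordism of the connected sum, applied fiberwise, combined with multiplicativity of $\hat A$ in Cartesian products; care is also needed to ensure compatibility of spin structures under all the identifications.
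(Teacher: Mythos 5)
Your overall strategy coincides with the paper's: both use the APS gluing formula to express $\hat A_\pi(\phi')$ as $\hat A$ of a closed spin manifold ($\bar Z \cong P'$), and then reduce to $\hat A(P)$ by spin-bordism manipulations in which each correction term is killed by a $\hat A$-trivial factor. Your extra trivial cylinder $X_0$ is harmless (it does not change the diffeomorphism type of the resulting closed manifold), and the identity $\hat A(\bar Z) = \ind D_{g_1}$ is exactly the paper's first step.

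The bordism bookkeeping is organized differently, and there is a genuine inaccuracy in your first simplification. You assert that ``the PSC metric on $E_M$ restricts to a PSC metric on $W_M=E_{M,s}$ coinciding with $g_0$ on both ends.'' This is not what the construction gives: the fiberwise PSC metric on $E_M$ produced by Walsh's theorem restricts to $g_0$ on $\partial_0 W_{M,s}$ but to $\phi(s)$ on $\partial_1 W_{M,s}$, which need not be $g_0$. It is only the doubled fiber $E'_s = W_{M,s}\cup_{\partial_1}W_{M,s}$ that is arranged to carry $g_0$ on both of its ends (this is precisely why the upside-down copy was glued in). So your PSC argument for $\hat A(W_M\cup_\partial(M\times[0,1]))=0$ does not go through as stated. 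The conclusion of that step survives anyway because the correction term has an $S^k$ factor and $\hat A(S^k)=0$ for $k\ge 1$, but that is a different mechanism, and leaves the $k=0$ endpoint dangling. Your final ``standard surgery identity'' $[\widehat E]=[P]+[S^k\times S^{4n-k-1}\times S^1]$ also deserves a concrete bordism, which you acknowledge but do not supply.

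The paper's route is shorter and avoids these pitfalls: it observes that $P'_M$ is the fiberwise connected sum of $P'_{S^{4n-k-1}}$ with $S^k\times M\times S^1$ (so $\hat A(P'_M)=\hat A(P'_{S^{4n-k-1}})$, the correction being killed by the $S^1$ factor), and then performs two fiberwise coindex-zero surgeries on the two copies of $S^{4n-k-1}$ in $P'_{S^{4n-k-1}}$, producing a spin bordism to $P\sqcup(S^k\times F)$. Since $\hat A(S^k\times F)=0$, this gives $\hat A(P')=\hat A(P)$ in two clean steps, without invoking PSC on the correction pieces at all.
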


\begin{proof}
  By definition 
\[
      \hat{A}_{\pi}(\phi') = \ind(D_g) , 
\]
where $g$ is a metric on  $(S^k \times [0,1]) \times M$ 
interpolating between the constant family $(g_0)$ on $(S^k \times 0) \times M$ and the family $\phi$ on 
$(S^k \times 1) \times M$, 
both families being scaled by an adiabatic constant with respect to some metric on $S^k$.

  We can isometrically glue $(S^k \times [0,1]) \times M$ along 
  top and bottom to $E'$ to obtain a new spin manifold $P'$. Because $E'$ carries a metric of positive scalar curvature, the gluing formula for the APS index yields
  \[
      \ind(D_{g}) = \hat{A}(P'). 
  \]

Notice that the underlying smooth manifold $P'$ is just obtained from $E'$ by identifying the two boundary components. A fiber $P'_x$ over $x\in S^k$  is depicted in Figure 3.

\begin{figure}\label{fig:single_fiber_2}
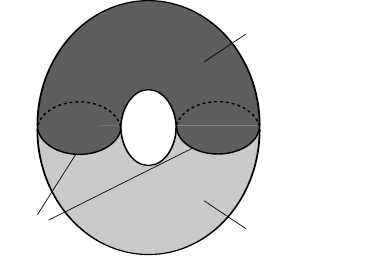
\caption{}
\end{figure}

Let us now write $P'=P'_M$ to denote the dependency on $M$. By construction $P'_M$ is obtained as the  fibered connected sum of $P'_{S^{4n-k-1}}$ and the trivial bundle $S^k\times M \times S^1$, which has trivial $\hat A$-genus. By bordism invariance we conclude that $\hat A(P'_M)$ is independent of $M$, so we may assume that $M=S^{4n-k-1}$, i.\,e. that no connected sum construction has been performed.

In this case we carry out fiberwise two coindex-zero surgeries on the two  copies of $M=S^{4n-k-1}$ inside the fibers of $P'_{S^{4n-k-1}}$ as 
shown in Figure 3. We get a bordism from $P'_{S^{4n-k-1}}$ to the disjoint union of the manifolds $P$ and $S^k\times F$, where $F$ is the fiber of the bundle $F \to P \to S^k$ we started with. The claim follows since the manifold $S^k\times F$ has again vanishing $\hat A$-genus.
\end{proof}

Because $\hat{A}_{\pi}$ is a homomorphism 
we can conclude that $\pi_k(\Riem^+(M), g_0)$ contains elements of infinite order if $k > 0$. 
For $k = 0$ we use the fact that $\hat{A}(P)$ can assume infinitely many integer values for 
different bundles $F \to P \to S^0$ so that $\pi_0 ( \Riem^+(M)) $ is infinite. 

Now consider the chain of group homomorphisms (for $k=0$ the first map is just a map of sets) 
\[
    \pi_k(\Riem^+(M), g_0) \to \Omega^{f\!r}_k(\Riem^+(M))\to \Omega^{Spin}_k(\Riem^+(M)) \to H_k(\Riem^+(M))
 \]
where the superscript ``fr'' stands for framed bordism. Because 
the invariant $\hat A$ is integer-valued and defined on Spin bordism, 
it follows that the images of our non-trivial homotopy classes remain non-zero
in rational framed bordism. The composed map between rational framed bordism and rational 
singular homology being an isomorphism we conclude the proof of part a) of
Theorem \ref{main_general}. 

Together with Corollary \ref{corol:geometric_significance} we also obtain Theorem \ref{theo:main_geom}.  Part a) of Theorem \ref{main_moduli} follows with Corollaries \ref{corol:observer_non_triv} and \ref{cor:case_pi_0} and part b) of Theorem \ref{main_moduli} is a consequence of the following result.

\begin{prop}\label{corol:homology_modu}
  Let $M$ be a simply connected spin manifold which is a \emph{strongly} $\hat A$-multiplicative
  fiber in degree $k$ and admits a metric of positive scalar curvature $g_0$. 
  Then 
  the invariant  $\hat A_{\Omega}$ factors through the image of the canonical map 
\[
   \Omega^{\rm fr}_k(\Riem^+(M))\to \Omega^{\rm fr}_k(\Riem^+(M)/\Diff_{x_0}(M)).
\]
\end{prop}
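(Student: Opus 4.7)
The plan is to show that if a class $[B,\phi]\in\Omega^{\rm fr}_k(\Riem^+(M))$ has vanishing image in $\Omega^{\rm fr}_k(\Riem^+(M)/\Diff_{x_0}(M))$, then $\hat A_\Omega([B,\phi])=0$; since $\hat A_\Omega$ is additive, this gives the asserted factorization through the image.

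Fix a framed null-bordism of the image, i.e.\ a compact framed $(k+1)$-manifold $W$ with $\partial W=B$ together with a continuous map $\Phi\colon W\to\Riem^+(M)/\Diff_{x_0}(M)$ extending the projection of $\phi$. The pullback along $\Phi$ of the principal $\Diff_{x_0}(M)$-bundle $\Riem^+(M)\to\Riem^+(M)/\Diff_{x_0}(M)$ yields, via the associated bundle construction, a smooth $M$-bundle $\pi\colon E\to W$ equipped with a smoothly varying family $(g_w)_{w\in W}$ of positive scalar curvature metrics on the fibers. Since $\phi$ lifts $\Phi|_B$ to $\Riem^+(M)$, the principal bundle is trivialized over $\partial W$, so there is a canonical diffeomorphism $\partial E\cong B\times M$ under which $(g_b)_{b\in B}$ corresponds to $\phi$. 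Because $M$ is simply connected and spin, $E$ inherits a canonical spin structure compatible with the product spin structure on $\partial E = B\times M$.

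Choose a metric $g_W$ on $W$ of product form near $\partial W$ and a horizontal distribution on $\pi\colon E\to W$; rescaling the fiber metrics by a sufficiently small adiabatic constant produces, via O'Neill, a positive scalar curvature metric $g_E$ on $E$ of product form near $\partial E$ with boundary metric $g_B\oplus (g_b)$. Equip $W\times M$ with $g_W\oplus\varepsilon g_0$ for $\varepsilon$ small (which has positive scalar curvature), and $B\times I\times M$ with an interpolating metric $g$ of the type used to define $\hat A_\Omega([B,\phi])$. Gluing the three pieces along the matching boundary data yields a closed spin manifold
\[
    Y \;=\; (W\times M)\,\cup_{B\times M}\,(B\times I\times M)\,\cup_{B\times M}\, E.
\]
The APS gluing formula, combined with the vanishing of the APS indices of the two flanking PSC pieces, gives $\hat A(Y) = \ind(D_g) = \hat A_\Omega([B,\phi])$.

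On the other hand, the central cylinder $B\times I\times M$ merely collars $W\times M$, so $Y$ is diffeomorphic to $(W\times M)\cup_{B\times M} E$, which naturally fibers as a smooth oriented $M$-bundle over the closed oriented double $DW=W\cup_B W$. Since $DW$ bounds $W\times I$ we have $\hat A(DW)=0$, and strong $\hat A$-multiplicativity of $M$ in degree $k$ yields
\[
    \hat A(Y)\;=\;\hat A(DW)\cdot\hat A(M)\;=\;0,
\]
finishing the proof. The main technical point to verify carefully is that the pullback construction really produces a smooth spin $M$-bundle whose spin structure and observer trivialization over $\partial W$ are compatible with those on $W\times M$ along $B\times M$, so that the glued manifold $Y$ is a smooth spin $M$-bundle over $DW$; here the simple connectedness of $M$ is essential, as it guarantees that $M$ carries a unique spin structure, canonically preserved by $\Diff_{x_0}(M)$.
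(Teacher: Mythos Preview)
Your argument is correct and follows essentially the same route as the paper's proof: pull back along the null-bordism $\Phi$ to obtain a twisted $M$-bundle $E$ with fiberwise positive scalar curvature, glue it to the trivial bundle $W\times M$ via the interpolating cylinder, and compute the $\hat A$-genus of the resulting closed total space in two ways. The only cosmetic difference is that the paper concludes $\hat A(\text{base})=0$ from stable parallelizability of the double, whereas you observe directly that the double $DW$ bounds; both are fine.
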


\begin{proof}
  Precomposing the canonical map from framed to spin bordism 
  yields a map 
 \[
     \hat{A}_{\rm fr} \colon \Omega^{\rm fr}_k(\Riem^+(M)) \to \Z . 
 \]
 We note that each diffeomorphism in $\Diff_{x_0}(M)$ canonically lifts to
 a spin diffeomorphism, because $M$ is simply connected
  by assumption and the differential is the identity at $x_0$.
 
 Let $B$ be a closed stably parallelizable manifold of dimension $k$, let $\phi \colon B \to \Riem^+(M)$ be a 
 continuous map and assume that there is a compact stably parallelizable manifold $Y$ with boundary $B$ 
 together with a map $\Phi  \colon Y \to \Riem^+(M) / \Diff_{x_0}(M)$ so that 
 \[
      \Phi|_{\partial Y } = \pi \circ \phi 
\]
where $\pi \colon \Riem^+(M) \to \Riem^+(M) / \Diff_{x_0}(M)$ is the canonical projection. 
We need to show that $\hat{A}_{\rm fr} ( \phi) = 0$.

  On the one hand, because
  $\Riem^+(M)\to \Riem^+(M)/\Diff_{x_0}(M)$ is a fiber bundle projection, the map 
  $\Phi$ gives rise to a (non-trivial) smooth bundle $E\to Y$ with fiber $M$
  and structure group $\Diff_{x_0}(M)$ equipped with a fiberwise metric of positive 
  scalar curvature. As the map $\Phi$ 
  restricts to $\pi \circ \phi$ on the boundary, the bundle $E|_{\boundary Y}$ admits a trivialization
  $E|_{\boundary Y}\iso S^k\times M$ such that the family of metrics coincides
  with the one given by $\phi$.

  On the other hand we consider the trivial bundle $Y \times M \to Y$ equipped with the constant 
  fiberwise metric $g_0$ of positive scalar curvature. 
  
  After choosing a Riemannian  metric on $Y$, a 
  horizontal distribution on the bundle $E \to Y$ and scaling the fiberwise metrics 
  on $E \to Y$ and ${Y \times M} \to Y$ by an adiabatic constant, we get positive scalar curvature metrics 
  $g_E$ and $g_{Y \times M}$  on the total spaces $E$ and $Y \times M$. Both of these total spaces
  admit canonical spin structures, so that the APS indices of $E$ and $Y \times M$ equipped with these metrics vanish.

  Choose a metric $g$ on $(B \times [0,1]) \times M$ inducing the restriction of  $g_{Y \times M}$ 
  on $(B \times 0) \times M$ and the restriction of $g_E$ on 
  $(B \times 1) \times M$.
  
  From this we obtain a fiber bundle with fiber $M$,  total space 
  \[
    X =   Y \times M \bigcup_{\partial Y \times M = B \times 0 \times M}  B \times [0,1] \times M 
       \bigcup_{B \times 1 \times M = \partial E} E 
  \]
 and base
 \[
       Y \bigcup_{\partial Y = B \times 0 } B \times [0,1] \bigcup_{B \times 1 = \partial Y} Y .
 \]
  We have $\hat{A}(X) = 0$, because $M$ is assumed to be a strongly 
  $\hat{A}$-multiplicative fiber in degree $k$ and the base manifold of this
  bundle is parallelizable.
  Hence
  \[
      \hat{A}_{\rm fr}(\phi) = \ind(D_{g_{Y\times M}}) + \ind(D_g) + \ind(D_{g_E}) = \hat{A}(X) = 0 \, . 
  \]
  as required. 
 \end{proof}

\section{Proof of Theorem \ref{technical}} 

Let us first assume $k \geq 1$, the case $k = 0$ being postponed to the end of the proof. 
We construct $P$ is in several steps. Let $\alpha  \geq
k/4+2$ be a natural number, let either $\beta = \alpha +1$ or $\beta =\alpha
+2$ and
$n = \alpha + \beta$. We consider the trivial fibration
\[
    \phi_0 \colon P_0 = S^{k} \times S^{4\alpha-k} \times S^{4
        \beta}  \to S^{k}   
\]
with (path connected) total space of dimension $4n$. By choosing $\alpha$ appropriately we can assume in addition that the fiber 
$S^{4\alpha - k} \times S^{4 \beta}$ of $\phi_0$ is $l$-connected. 

We first apply the following result of surgery theory, in which $\tau\colon P_0 \to B\pi_1(P_0)$ denotes the classifying map of the universal covering.

\begin{thm}[{\cite[Theorem 6.5]{Davis}}]\label{Davis_theorem}
Let $\mathcal{L}\in \bigoplus_{j>1} H^{4j}(P_0;\mathbb{Q})$ be a class such that
\[\tau_*(\mathcal{L}\cap [P_0])=0\in H_{4n-4*}(B\pi_1 (P_0) ;\mathbb{Q}).\]
Then there is some non-zero integer $R$ and a homotopy equivalence $f\colon P_1\to P_0$ of closed smooth manifolds such that the $L$-polynomials of $P_0$ and $P_1$ satisfy the equation
\[L(P_1) = f^* \bigl(L(P_0) + R\mathcal{L}\bigr).\]
\end{thm}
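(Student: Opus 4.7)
The plan is to apply the Sullivan--Wall surgery exact sequence to the closed $4n$-dimensional manifold $P_0$ and to exploit that, rationally, the surgery obstruction factors through the assembly map for $B\pi$, where $\pi = \pi_1(P_0)$. Since $\dim P_0 = 4n \geq 5$, the exact sequence
\[
L_{4n+1}(\mathbb{Z}\pi) \longrightarrow \mathcal{S}(P_0) \longrightarrow \mathcal{N}(P_0) \xrightarrow{\;\sigma\;} L_{4n}(\mathbb{Z}\pi)
\]
is available, where $\mathcal{N}(P_0) = [P_0, G/\OO]$ and $\sigma$ is the surgery obstruction. Sullivan's computation of the rational homotopy type $(G/\OO)_{\mathbb{Q}} \simeq \prod_{j \geq 1} K(\mathbb{Q}, 4j)$ then yields
\[
\mathcal{N}(P_0) \otimes \mathbb{Q} \;\cong\; \bigoplus_{j \geq 1} H^{4j}(P_0; \mathbb{Q}),
\]
and, after the standard change of variables between Pontryagin data and Hirzebruch $L$-classes, the image of a structure $[f\colon P_1 \to P_0]\in\mathcal{S}(P_0)$ in this group is the $L$-class difference $(f^{-1})^* L(P_1) - L(P_0)$.

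Next, I would invoke the rational-assembly description of the surgery obstruction: the map $\sigma \otimes \mathbb{Q}$ factors as
\[
\mathcal{N}(P_0) \otimes \mathbb{Q} \xrightarrow{(-)\cap [P_0]} H_*(P_0; \mathbb{Q}) \xrightarrow{\tau_*} H_*(B\pi; \mathbb{Q}) \xrightarrow{\mathrm{asm}} L_{4n}(\mathbb{Z}\pi) \otimes \mathbb{Q}.
\]
Thus the hypothesis $\tau_*(\mathcal{L}\cap[P_0]) = 0$ forces the image of $\mathcal{L}$ under $\sigma \otimes \mathbb{Q}$ to vanish, so there is a nonzero integer $R_0$ with $R_0\mathcal{L}$ lifting to an integral normal invariant $\nu \in \mathcal{N}(P_0)$ whose surgery obstruction $\sigma(\nu) \in L_{4n}(\mathbb{Z}\pi)$ is torsion. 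Multiplying $\nu$ by the order $N$ of $\sigma(\nu)$ kills the obstruction, and by exactness $N\nu$ lifts to a structure $[f\colon P_1 \to P_0] \in \mathcal{S}(P_0)$. Setting $R = NR_0$ and unwinding the Sullivan identification of normal invariants with $L$-class differences produces the desired equality $L(P_1) = f^*(L(P_0) + R\mathcal{L})$. The restriction $j > 1$ in the hypothesis simply avoids the degree-$4$ component, where the classical signature theorem rather than the assembly map governs the obstruction.

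The main obstacle is the rational factorization of $\sigma$ through $H_*(B\pi; \mathbb{Q})$: this is the technical core of rational surgery theory, proved either via Sullivan's characteristic variety theorem or via Ranicki's algebraic surgery exact sequence combined with the Hirzebruch identification $L_*(\mathbb{Z}) \otimes \mathbb{Q} \cong \mathbb{Q}[t]$ coming from the signature. A secondary technicality is the precise calibration between the $G/\OO$-characteristic classes carried by a normal invariant and the Hirzebruch $L$-class difference of the resulting homotopy equivalence, which goes back to Sullivan's thesis (with more modern expositions in Madsen--Milgram and Ranicki). Once these two inputs are granted, the rest is a formal manipulation of the surgery exact sequence.
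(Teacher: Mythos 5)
Your skeleton is the same as the paper's (realize a multiple of $\mathcal{L}$ as a normal invariant, use the rational characteristic-class/assembly formula plus the hypothesis $\tau_*(\mathcal{L}\cap[P_0])=0$ to see that the surgery obstruction dies rationally, then land in the structure set and translate back into $L$-classes), but running the entire argument inside the \emph{smooth} surgery exact sequence with $\mathcal{N}(P_0)=[P_0,G/O]$ creates a genuine gap at the step ``multiplying $\nu$ by the order $N$ of $\sigma(\nu)$ kills the obstruction''. That step needs the surgery obstruction $\sigma\colon [P_0,G/O]\to L_{4n}(\Z\pi)$ to be additive, and it is not: for the Whitney-sum $H$-space structure on $G/O$ the deviation $\sigma(x+y)-\sigma(x)-\sigma(y)$ is given by decomposable product terms (already for simply connected $P_0$ it is essentially $\langle L(P_0)\,\ell(x)\,\ell(y),[P_0]\rangle$ up to a factor), and no $H$-space structure on $G/O$ making $\sigma$ a homomorphism is available; for the same reason ``$\sigma\otimes\Q$'' is not a linear map on $[P_0,G/O]\otimes\Q$, even though the element-wise rational formula you invoke is fine. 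Worse, the characteristic class itself is not additive under Whitney sum, so the rational class of $N\nu$ is no longer a multiple of $\mathcal{L}$ but acquires decomposable corrections, and the target identity $L(P_1)=f^*\bigl(L(P_0)+R\mathcal{L}\bigr)$ is lost. This is exactly the point the paper's sketch (following Davis) is engineered around: it works with \emph{topological} normal invariants $[P_0,G/\Top]$ equipped with the $L$-theoretic abelian group structure, for which both $\ell$ and the surgery obstruction are homomorphisms, arranges $\ell(f)=R_1\mathcal{L}L(P_0)^{-1}$ and kills the obstruction there, and only then returns to the smooth category via Weinberger's theorem that some nonzero multiple of any topological normal invariant lifts to $[P_0,G/O]$, after which smooth surgery produces $P_1$. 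Your proposal contains no substitute for either ingredient (the additive structure on $G/\Top$, or the smoothing of a multiple), and the final object must be a smooth manifold, so one cannot simply stay topological either.

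Two smaller remarks. First, in the paper's actual application $\pi_1(P_0)$ is trivial or $\Z$, so $L_{4n}(\Z\pi)\cong\Z$ is torsion free and the ``multiply by $N$'' issue is vacuous there; but as a proof of the general statement you are citing (arbitrary $\pi_1$, where $L$-groups have torsion) the gap is real. Second, your identification of normal invariants with $L$-class differences and the factorization through $(-)\cap[P_0]$ are mutually consistent only after the calibration you defer: in terms of the genuine Sullivan classes the correct input to the assembly map is $\bigl(\ell(x)\cup L(P_0)\bigr)\cap[P_0]$, which is why the paper chooses $\ell(f)=R_1\mathcal{L}L(P_0)^{-1}$ (invisible here only because $L(P_0)=1$ for a product of spheres), and the translation to $L$-class differences carries the factor $8$ that ends up in $R=8R_3$.
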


\begin{proof}[Sketch of proof] As we need a slight modification of the argument later, we sketch the proof.
For more details, proofs or references concerning some of the statements
below we refer the reader to \cite{Davis}.

At first we identify the set of topological normal invariants on $P_0$ with $[P_0, G/\Top]$, which we equip 
with  the Abelian group structure so that  the surgery obstruction is a group
homomorphism. There is an  isomorphism
\[ \ell\colon [P_0, G/\Top] \otimes \mathbb{Q} \xrightarrow{\cong} \bigoplus_{j>0} H^{4j}(P_0, \mathbb{Q})\]
sending the class of a degree one normal map $f \colon (P_1, TP_1) \to (P_0,\xi)$
to the class $\ell(f )=\frac18(L(\xi)L(P_0)^{-1} -1)$. Thus there exists a non-zero integer $R_1$ and a 
normal invariant $f \in [P_0, G/\Top]$ such that $\ell(f)=R_1 \mathcal{L} L(P_0)^{-1}$. We note that for this normal invariant  the desired
equation of $L$-polynomials for $L(P_1)=f^*L(\xi)$ holds with $8R_1$ instead of $R$. 

We need  to compute the surgery obstruction for this normal invariant $f$. 
The surgery obstruction 
\[[P_0, G/\Top] \otimes \Q  \to L_{4n}(\mathbb{Z} [\pi_1 (P_0) ])\otimes \mathbb{Q} \]
factors along
\begin{multline*} [P_0, G/\Top]\otimes \mathbb{Q} \xrightarrow[\ell]{\cong}
  \bigoplus_{k>0} H^{4k}(P_0; \mathbb{Q}) \xrightarrow{(-\cup L(P_0))\cap
    [P_0]}\\
 H_{4n-4*}( P_0; \mathbb{Q})\xrightarrow{\tau_*} H_{4n-4*}(B\pi_1
  (P_0);\mathbb{Q}).
\end{multline*}

Hence by assumption the surgery obstruction of $f$ 
 is zero in $L_{4n}(\Z [\pi_1 (P_0) ]) \otimes \Q$. This implies that there is a non-zero integer $R_2$  and 
 a normal invariant $f \in [P_0, G/\Top]$ so that $\ell(f) = R_2 R_1 \mathcal{L} 
 L(P_0)^{-1}$ and so that the surgery obstruction of this $f$ vanishes. Because 
$\pi_1(P_0)$ is either trivial or equal to $\Z$ in our case, we have $L_{4n}(\Z [\pi_1 ( P_0) ]) \cong \Z$ by the 
 Bass-Heller-Swan splitting theorem, so that we can in fact choose $R_2 =1$.

Finally it follows from \cite{Weinberger} that for each normal invariant $ f \in [P_0, G/ \Top]$ some multiple 
of $f$  lies in the image of the canonical map $[P_0 ,G / O] \to [P_0, G / \Top]$. Hence 
we find a non-zero multiple $R_3$ of $R_2R_1$ and  a smooth normal invariant $f \in [P_0 ,G/O]$ 
which satisfies $\ell(f) = R_3 \mathcal{L} L(P_0)^{-1}$  and 
whose surgery obstruction vanishes.  Performing surgery along this normal invariant 
yields a homotopy equivalence $f : P_1 \to  P_0$ so that the stated 
equation holds with $R = 8 R_3$. 
\end{proof}

We apply this result to the following situation: Let
\begin{equation*}
e_{k} \in H^{k}(S^{k} ;
\Z),  \quad e_{4\alpha-k} \in H^{4\alpha-k}(S^{4\alpha-k} ; \Z), \quad e_{4\beta}
\in H^{4\beta}(S^{4\beta} ; \Z)
\end{equation*}
be generators. As $P_0$ is stably
parallelizable, $L(P_0)=1$. Moreover, $\pi_1(P_0)$ is trivial if $k\geq  2$ or
infinite cyclic if $k=1$. We conclude from Theorem \ref{Davis_theorem} that we can find $R\ne 0$ and a homotopy
equivalence  
\[
    f \colon P_1 \to P_0 
\]
of smooth closed manifolds so that all homogeneous components of the
Hirzebruch $L$-class of $P_1$ vanish  except
\[
     L_{\alpha} = R \cdot (e_{k} \times e_{4\alpha-k}) , \quad L_{\beta} = R \cdot e_{4\beta}. 
\]
Here we use the identification
\[
f^* \colon H^*(S^{k} \times S^{4\alpha-k} \times S^{4\beta} ; \Q)  \cong H^*(P_1 ; \Q). 
\]

We will show that a manifold with the above properties automatically has
non-zero $\hat{A}$-genus. For $j \geq 1$ we  denote by $\lambda_j^L\in\rationals$ the coefficient of the 
degree $4j$ Pontryagin class $p_j$ in $L_j(p_1, ..., p_j)$, and by $\lambda^A_j$ the corresponding coefficient of the $\hat{A}$-polynomial $\hat{A}_j(p_1, \ldots, p_j)$.  

\begin{lem}\label{computation_of_coefficients}
We have
\begin{align*}
   \lambda_j^L & = \frac{2^{2j} (2^{2j-1}-1)}{(2j)!}\cdot B_j,\\
   \lambda_j^A & =-\frac{1}{2(2j)!}\cdot B_j
\end{align*}
where $B_j$ denotes the $j$-th Bernoulli number, defined by the relation (cf. \cite[p. 281]{Milnor})
\[
     \frac{x}{\tanh x} = 1 + \sum_{j = 1}^{\infty} (-1)^{j-1} \frac{B_j}{(2j)!}(2x)^{2j} .
\]
\end{lem}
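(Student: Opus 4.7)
The proof naturally splits into a conceptual reduction, which expresses $\lambda_j^K$ in terms of the characteristic series $Q$, and an explicit power-series computation for $Q_L$ and $Q_A$.

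First I would establish a general identity: for any multiplicative sequence $K$ with characteristic series $Q(z) = 1 + \sum_{j\geq 1} c_j z^j$, the coefficient $\lambda_j^K$ of $p_j$ in $K_j$ satisfies
\[
\lambda_j^K \;=\; (-1)^{j-1}\,j\cdot [z^j]\log Q(z).
\]
To derive this, introduce formal Pontryagin roots $y_1, y_2, \ldots$ so that $p_k = \sigma_k(y)$, and write $\log Q(y) = \sum_{m\geq 1} b_m\, y^m$. Then
\[
\log \prod_i Q(y_i) \;=\; \sum_m b_m s_m(y), \qquad s_m(y) = \sum_i y_i^m.
\]
Re-exponentiating and extracting the degree-$j$ component, the coefficient of $\sigma_j = p_j$ can only come from the linear term $b_j s_j$, because each $s_a$ with $a < j$ (and hence any product of such) involves only $\sigma_1, \ldots, \sigma_a$. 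Newton's identity $s_j = (-1)^{j-1} j\, \sigma_j + (\text{polynomial in } \sigma_1,\ldots,\sigma_{j-1})$ then yields the claimed formula.

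With this reduction the two computations become routine integrations using the Bernoulli series supplied in the statement. For the $L$-genus, setting $w = \sqrt z$ and $Q_L = w \coth w$, differentiation gives
\[
\frac{d}{dw} \log(w\coth w) \;=\; \frac{1}{w} - \frac{2}{\sinh 2w}.
\]
The series for $2x/\sinh 2x$ is obtained from the identity $x\coth x - x\tanh x = 2x/\sinh 2x$ (immediate from $\cosh^2 - \sinh^2 = 1$ and $\sinh 2x = 2\sinh x\cosh x$), combined with $x\tanh x = 2x\coth 2x - x\coth x$ (from the double-angle formula for $\coth$); both terms on the right are expressible through the paper's formula for $x/\tanh x$. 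Term-by-term integration (with vanishing constant because $\log Q_L(0) = 0$) and the substitution $z = w^2$ give $[z^j]\log Q_L(z)$, which multiplied by $(-1)^{j-1} j$ yields the asserted value of $\lambda_j^L$. For the $\hat A$-genus, with $u = \sqrt z/2$ and $Q_A = u/\sinh u$, one has $\frac{d}{du}\log Q_A = 1/u - \coth u$, and $\coth u$ is read off directly from $u\coth u = 1 + \sum_{j\geq 1} (-1)^{j-1} 2^{2j} B_j u^{2j}/(2j)!$. Integrating termwise and replacing $u$ by $\sqrt z/2$ produces $[z^j]\log Q_A(z)$ and hence $\lambda_j^A$.

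The entire conceptual content of the proof is in Step 1: one must check that after passing to power sums only the linear term in the exponential contributes to the $\sigma_j$-coefficient, which rests on the triangularity of Newton's change of basis between power sums and elementary symmetric polynomials. Once this reduction is in place, everything else is a bookkeeping exercise with Bernoulli numbers; the only risk is sign errors in the double-angle manipulations used for the $L$-genus.
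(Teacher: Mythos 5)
Your proof is correct and arrives at the same destination by essentially the same route, with somewhat more self-containment. The key reduction you prove --- $\lambda_j^K = (-1)^{j-1}\,j\cdot[z^j]\log Q(z)$ via Newton's identities and the triangularity of the power-sum-to-elementary-symmetric change of basis --- is exactly the identity the paper cites from Milnor--Stasheff (Problem 19-C): indeed $K(t)\,d(t/K(t))/dt = 1 - t\,\tfrac{d}{dt}\log K(t)$, so the paper's $1 + \sum_{j\geq 1}(-1)^j\lambda_j^K t^j$ is just the statement $\lambda_j^K = (-1)^{j-1}j\,b_j$ where $\log Q = \sum b_j t^j$. The downstream computations also match: for $\hat{A}$, the paper works directly with $\hat{A}(t)\,d(t/\hat{A}(t))/dt = \tfrac12 + \tfrac12\,\tfrac{u}{\tanh u}$ (with $u=\tfrac12\sqrt t$), which is the same as your integration of $1/u - \coth u$; for $L$, the paper merely remarks that the formula ``appears explicitly in Milnor--Stasheff or can be obtained by similar methods,'' whereas you supply the double-angle manipulations $\coth w - \tanh w = 2/\sinh 2w$ and $\tanh w = 2\coth 2w - \coth w$ that make the computation explicit; your derivation checks out and produces the stated coefficient. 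So the gain of your write-up is that it replaces a citation by a short derivation, at no loss of correctness.
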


\begin{proof}
According 
to \cite[Problem 19-C]{Milnor} we have equations 
\begin{eqnarray*}  
    L(t) \frac{d (t/ L(t))}{dt}  & = &  1  + \sum_{j=1}^{\infty} (-1)^j  \lambda^L_j \cdot t^j \\
    \hat{A}(t) \frac{d (t/ \hat{A}(t))}{dt}  & =  & 1  + \sum_{j=1}^{\infty} (-1)^j \lambda^{\hat{A}}_j \cdot t^j
\end{eqnarray*} 
where 
\[
    L(t)  =  \frac{\sqrt{t}}{\tanh(\sqrt{t})} \, , \hspace{0.5cm} \hat{A}(t)=  \frac{\frac{1}{2} \sqrt{t}}{\sinh(\frac{1}{2}\sqrt{t})}
\]
are the formal power series for the multiplicative sequences $\{L_n\}$ and $\{\hat{A}_n\}$.

For the $\hat{A}$-polynomial we compute
\[
     \hat{A}(t) \frac{d (t/ \hat{A}(t))}{dt} = \frac{1}{2} + \frac{1}{2} \frac{\frac{1}{2} \sqrt{t}}{\tanh (\frac{1}{2} \sqrt{t}) } \, . 
\]
This gives (setting $x = \frac{1}{2} \sqrt{t}$ in the relation defining the Bernoulli numbers) 
\[
     \hat{A}(t) \frac{d (t/ \hat{A}(t))}{dt} = 1 + \frac{1}{2} \cdot \sum_{j=1}^{\infty} (-1)^{j-1} \frac{B_j}{(2j)!}  t^j \, . 
\]
This implies the second equation. The first equation appears explicitly in \cite[Problem 19-C]{Milnor} or can be obtained by similar methods.
\end{proof}

This lemma implies that all $\lambda^L_j$ and $\lambda^A_j$ are different from zero. By our choice of $\beta=\alpha+1$ or $\beta=\alpha+2$, it follows recursively that all the Pontryagin classes of $P_1$ vanish except 
possibly $p_\alpha$, $p_\beta$, and $p_n$, and that $p_{\alpha}$ and $p_{\beta}$ as well as $p_{\alpha} \cdot p_{\beta}$ 
are non-zero. 

Let us write the degree $4n$-components of 
the $L$- and the $\hat{A}$-polynomial of an arbitrary vector bundle with vanishing $p_j$ for  $j \neq \alpha, \beta, n$  as
\begin{eqnarray*} 
  L_n(p_\alpha, p_\beta , p_n) &  = & \lambda^L_n \cdot p_n + \mu^L \cdot p_\alpha p_\beta \\
  \hat{A}_n (p_\alpha, p_\beta, p_n) & = & \lambda^{A}_n \cdot p_n + \mu^{A} \cdot p_\alpha p_\beta \, . 
\end{eqnarray*}
with rational numbers $\mu^L, \mu^{A}$. 

\begin{prop} \label{calc} We have 
\[
    \frac{\mu^L}{\lambda^L_n} \neq \frac{\mu^A}{\lambda^A_n} 
\]
and hence the following implication for a $4n$-dimensional connected oriented
manifold $M$ with $p_j(M) = 0$ for $j \neq \alpha,\beta, n$:
\[
 {\rm If} \quad p_{\alpha} (M) \cdot p_{\beta}(M)  \neq 0 \quad \text{and} \quad   L(M)
    = 0,   \quad {\rm then} \quad \hat{A}(M) \neq 0 \, . 
\]
\end{prop}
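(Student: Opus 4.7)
My plan is to exploit the multiplicativity of $L$ and $\hat A$ in order to reduce the computation of $\mu^L$ and $\mu^A$ to the diagonal coefficients $\lambda^L_j, \lambda^A_j$ already evaluated in Lemma \ref{computation_of_coefficients}.

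The first step is to consider a formal virtual bundle of the form $V = V_1 + V_2$ with $p(V_1) = 1 + p_\alpha$ and $p(V_2) = 1 + p_\beta$, so that $p(V) = (1+p_\alpha)(1+p_\beta)$ has only $p_\alpha(V) = p_\alpha$, $p_\beta(V) = p_\beta$ and $p_n(V) = p_\alpha p_\beta$ non-vanishing. On the one hand, the assumed expansion of $K_n$ (where $K$ stands for either $L$ or $\hat A$) applied to $V$ gives $K_n(V) = (\lambda^K_n + \mu^K)\, p_\alpha p_\beta$. On the other hand, multiplicativity $K(V) = K(V_1)\cdot K(V_2)$ combined with the observation that $K_i(V_1)$ vanishes unless $\alpha$ divides $i$ (and symmetrically for $V_2$ with $\beta$) cuts the degree-$4n$ part of $K(V_1)\cdot K(V_2)$ down to the single contribution $\lambda^K_\alpha p_\alpha \cdot \lambda^K_\beta p_\beta$, provided $\alpha \nmid \beta$; the only borderline case $\alpha = 2,\beta = 4$ is avoided by choosing $\beta = \alpha + 1$ instead of $\alpha + 2$, both options being allowed earlier in the construction. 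Comparing the two expressions yields the key identity
\[
\frac{\mu^K}{\lambda^K_n} \;=\; \frac{\lambda^K_\alpha \lambda^K_\beta}{\lambda^K_n} - 1.
\]

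Inserting the formulas from Lemma \ref{computation_of_coefficients} and cancelling the factorials and powers of $2$ gives
\[
\frac{\lambda^L_\alpha \lambda^L_\beta}{\lambda^L_n} \;=\; \binom{2n}{2\alpha}\cdot\frac{(2^{2\alpha-1}-1)(2^{2\beta-1}-1)}{2^{2n-1}-1}\cdot \frac{B_\alpha B_\beta}{B_n}
\]
and
\[
\frac{\lambda^A_\alpha \lambda^A_\beta}{\lambda^A_n} \;=\; -\frac{1}{2}\binom{2n}{2\alpha}\cdot\frac{B_\alpha B_\beta}{B_n}.
\]
Since all Bernoulli numbers $B_j$ for $j \geq 1$ are non-zero (as is the binomial coefficient), the asserted inequality $\mu^L/\lambda^L_n \neq \mu^A/\lambda^A_n$ reduces to the manifestly correct statement that the positive quantity $(2^{2\alpha-1}-1)(2^{2\beta-1}-1)/(2^{2n-1}-1)$ cannot equal the negative number $-1/2$.

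The implication for $M$ is then routine: in $H^{4n}(M;\Q)\cong\Q$ the non-vanishing of $p_\alpha(M)p_\beta(M)$ together with $L_n(M)[M] = 0$ lets us solve for $p_n(M)$ as a rational multiple of $p_\alpha(M)p_\beta(M)$, and substituting into $\hat A_n(M) = \lambda^A_n p_n(M) + \mu^A p_\alpha(M) p_\beta(M)$ produces a non-zero rational multiple of $p_\alpha(M)p_\beta(M)$, whence $\hat A(M)\neq 0$. The main conceptual step is thus the identification $\mu^K = \lambda^K_\alpha\lambda^K_\beta - \lambda^K_n$; once this is established, everything else reduces to elementary arithmetic with Bernoulli numbers and powers of two.
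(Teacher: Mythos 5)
Your proof is correct and takes essentially the same approach as the paper: the key step is the splitting computation yielding $1 + \mu^K/\lambda^K_n = \lambda^K_\alpha\lambda^K_\beta/\lambda^K_n$ for $K\in\{L,\hat A\}$, followed by elementary arithmetic using the Bernoulli-number formulas from Lemma~\ref{computation_of_coefficients}. You are slightly more explicit about the divisibility condition $\alpha\nmid\beta$ needed for the cross-term reduction, which the paper ensures tacitly through the constraint $\alpha\geq k/4+2\geq 3$.
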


\begin{proof} This is a calculation in universal characteristic classes. To keep the notation transparent, assume 
that $E_\alpha,E_\beta$ are vector bundles with total Pontryagin classes $1 + p_\alpha$ and $1 +p_\beta$, in particular
  $L_n(E_\alpha)=0=L_n(E_\beta)$. By the multiplicativity of the total
  Pontryagin class and the universal $L$- and $\hat{A}$-polynomials we obtain 
  \begin{equation*}
       p_n(E_\alpha\oplus E_\beta) = p_\alpha\cdot  p_\beta,  \quad
  p_\alpha(E_\alpha\oplus E_\beta) = p_\alpha, \quad  p_\beta(E_\alpha\oplus 
  E_\beta) =  p_\beta .
  \end{equation*}
This implies 
\begin{eqnarray*}
  (\lambda^L_n+\mu^L)\cdot (p_\alpha \cdot  p_\beta) &=&  L_n(E_\alpha\oplus E_\beta)\\
 = 
  L_n(E_\alpha)+L_n(E_\beta) 
    +L_\alpha(E_\alpha)\cdot L_\beta(E_\beta) &=& \lambda^L_\alpha \cdot \lambda^l_\beta \cdot 
    (p_\alpha \cdot p_\beta), 
 \end{eqnarray*}
 and hence 
\[
    \lambda^L_n+\mu^L =  \lambda^L_\alpha \lambda^L_\beta.
\]
An analogous computation shows 
\[
    \lambda^{A}_n + \mu^{A}   =  \lambda^{A}_\alpha \lambda^{A}_\beta,  
\]
so that altogether we obtain
\[ 
  1+ \frac{\mu^L}{\lambda^L_n}  =  \frac{\lambda^L_\alpha
    \lambda^L_\beta}{\lambda^L_n}, \quad
  1+  \frac{\mu^A}{\lambda^A_n}  =  \frac{\lambda^A_\alpha
    \lambda^A_\beta}{\lambda^A_n}.
\]

From Lemma   \ref{computation_of_coefficients}  it follows that
\[ 
  1+  \frac{\mu^L}{\lambda^L_n} =  -
  \frac{2(2^{2\alpha-1}-1)(2^{2\beta-1}-1)}{2^{2n-1} - 1} 
  \frac{\lambda_\alpha^{A}\lambda_\beta^A}{\lambda^{A}_n } = -
  \frac{2(2^{2\alpha-1}-1)(2^{2\beta-1}-1)}{2^{2n-1} - 1}\left(
    \frac{\mu^A}{\lambda^A_n} +1\right) . 
\]
Because $ -\frac{2(2^{2\alpha-1}-1)(2^{2\beta-1}-1)}{2^{2n-1} - 1}  \ne
1$ the conclusion follows.
\end{proof}

If $k=1$ then the map
\[
    \phi_0 \circ f \colon P_1 \to S^{1} 
\]
is homotopic to the projection map of a smooth fiber bundle. This follows from
Farrell's obstruction theory over the circle \cite[Theorem
6.4]{FarrellIndiana}. Note 
that in the case at hand $\phi_0\circ f$ induces an isomorphism of fundamental
groups, so that the kernel of the $\pi_1$-homomorphism is the trivial group. By
\cite[Remarks on p.~316]{FarrellIndiana}, the fibering obstructions vanish and
one only has
to check that the universal covering of $P_1$ is homotopy equivalent to a
finite CW-complex. But this
is homotopy equivalent to
the universal covering of $P_0$ and therefore has this property.

If $k\geq 2$ the map $\phi_0\circ f$ will usually not be homotopic to a fiber
bundle projection and a more complicated construction is needed. The theory
which is relevant for the following discussion was developed by Casson
\cite{Casson} and Hatcher \cite{Hatcher}.

Recall that for any continuous map $f \colon X \to Y$ of topological spaces,
the {\em homotopy fiber $L$} of $f$ is the fiber of the map
\begin{eqnarray*} 
     E_f & \to & Y \\
     (x,\gamma) & \mapsto & \gamma(1) 
\end{eqnarray*} 
where 
\[
   E_f = \{ (x, \gamma) ~|~ x \in X, \gamma\colon [0,1] \to Y, \gamma(0) = f(x) \} \, . 
\]
Let $b \in S^m$ be the north pole, viewed as base point in $S^m$. 
Let $V$ be a  closed smooth manifold $V$ equipped with a smooth map $f\colon
V\to S^m$ such that the homotopy fiber is simply-connected. Casson
\cite[Section 1]{Casson}  defines such a map
to be a {\em pre-fibration} if the point $b
\in S^m$ is a regular value, $V\setminus f^{-1}(b)$ is simply-connected and
the canonical inclusion  
\[
       f^{-1}(b) \to L \, , ~ v \mapsto (v, \gamma\colon t \mapsto f(v)) 
\]
of the point-preimage into the homotopy fiber of $f$ is a weak homotopy equivalence. In this case the
smooth manifold $F = f^{-1}(b)$ is called the {\em fiber} of $(V,f)$.  

\begin{prop} Let $R \neq 0$ be the number appearing in the construction of $f : P_1 \to P_0$ 
after the proof of Theorem \ref{Davis_theorem}. If we construct the homotopy equivalence 
$f : P_1 \to P_0$ using the number $2R$ instead of $R$,
  then the map $\phi_0 \circ f
  \colon P_1 \to S^{k}$ constructed above is homotopic to a pre-fibration
  $\phi_1\colon P_1 \to S^{k}$.
\end{prop}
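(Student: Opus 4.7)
The plan is to apply Casson's theory of pre-fibrations over spheres \cite{Casson}. First note that $\phi_0\colon P_0\to S^k$ is already a smooth fiber bundle with fiber $F_0=S^{4\alpha-k}\times S^{4\beta}$, which is simply connected since $4\alpha-k\ge 8$ by our choice $\alpha\ge k/4+2$. Because $f$ is a homotopy equivalence, the homotopy fiber of the composition $\phi_0\circ f\colon P_1\to S^k$ is weakly equivalent to $F_0$, and in particular simply connected. The question of whether $\phi_0\circ f$ is homotopic to a pre-fibration therefore falls within the scope of Casson's framework.

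Casson's theory associates to each homotopy class of maps $V\to S^k$ with simply connected homotopy fiber an obstruction, living in a computable abelian group, whose vanishing guarantees the existence of a pre-fibration in that homotopy class. For the map $\phi_0$ itself the obstruction is trivially zero, because $\phi_0$ is already a smooth fiber bundle and hence a pre-fibration. The obstruction for $\phi_0\circ f$ differs from that of $\phi_0$ by a contribution determined by the surgery data used to produce $f\colon P_1\to P_0$. By the proof of Theorem \ref{Davis_theorem}, this surgery data is parametrized by a normal invariant in $[P_0, G/O]\otimes \Q$ which is proportional to the scaling constant $R$; consequently the Casson obstruction of $\phi_0\circ f$ also scales linearly with $R$.

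Replacing $R$ by $2R$ therefore doubles the Casson obstruction. The point is that the relevant part of this obstruction group is $2$-torsion (reflecting the standard $2$-torsion phenomena in the interface between smooth surgery and normal invariants), so that doubling kills it. Hence with $2R$ in place of $R$ the obstruction vanishes and $\phi_0\circ f$ is homotopic to a pre-fibration $\phi_1\colon P_1\to S^k$, as claimed.

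The main obstacle is to make the scaling and $2$-torsion statements quantitatively precise. This requires identifying Casson's obstruction explicitly in terms of the normal invariant $\ell(f)=R\cdot \mathcal{L}L(P_0)^{-1}$ produced in the proof of Theorem \ref{Davis_theorem}, verifying that this identification respects the linear scaling by $R$, and checking that the image lies in a $2$-torsion subgroup. Given the explicit description of $\mathcal{L}$ and the fact that $\phi_0$ is already a fiber bundle, all the relevant computations can be carried out by following Casson's construction step by step.
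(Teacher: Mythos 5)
There is a genuine gap in the case analysis. Casson's obstruction to homotoping $\phi_0\circ f$ to a pre-fibration is the simply-connected surgery obstruction of the degree-one normal map from the point-preimage $F$ to the homotopy fiber, and this lives in $L_{4n-k}(\Z)$. For $k$ odd this group vanishes, for $k\equiv 2\pmod 4$ it is $\Z/2$, but for $k\equiv 0\pmod 4$ it is $\Z$ --- which has \emph{no} $2$-torsion. Your claim that ``the relevant part of this obstruction group is $2$-torsion, so that doubling kills it'' is therefore simply false when $k$ is divisible by $4$: multiplying a normal invariant by $2$ does not annihilate an integer-valued surgery obstruction. The paper's proof handles this case by an independent argument: there the obstruction is (up to a multiple) the signature difference between $F$ and the homotopy fiber $S^{4\alpha-k}\times S^{4\beta}$, and one checks both signatures vanish --- for the sphere product this is obvious, and for $F$ it follows from Hirzebruch's formula $\sigma(F)=\langle L(P_1),[F]\rangle$ together with the observation that $P_1$ was constructed to have no $L$-class in the relevant degree. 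The passage from $R$ to $2R$ plays no role here; it is only needed in the $k\equiv 2\pmod 4$ case, precisely because $L_{4n-k}(\Z)\cong\Z/2$ there and the obstruction is a group homomorphism in the normal invariant.

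A second, smaller gap: you assert the Casson obstruction ``scales linearly with $R$'' but do not substantiate this. The paper makes this precise only in the $\Z/2$ case, by exhibiting the obstruction as the composite
\[
[P_0, G/\Top]\longrightarrow [S^{4\alpha-k}\times S^{4\beta}, G/\Top]\longrightarrow L_{4n-k}(\Z)\cong\Z/2,
\]
where the first map is restriction to the fiber; since this is a composite of group homomorphisms, doubling the normal invariant doubles the obstruction. Without identifying this explicit factorization, the linearity claim remains unjustified. You should rework the argument along these lines: split into parities of $k$, handle $k\equiv 0\pmod 4$ by the signature computation, and reserve the doubling trick for $k\equiv 2\pmod 4$ only, where it is both valid and needed.
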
 

\begin{proof} 
After applying a homotopy to $\phi_0\circ f$ we obtain a map $g$ for which the
value $b\in S^{4k}$ is regular with some fiber $F$. By \cite[Lemma 4]{Casson}
we can assume that $F$ and $P_1\setminus F$ are simply connected. By
\cite[Lemma 2 and the proof of Theorem 1]{Casson}
 the
inclusion from $F$ into the homotopy fiber of $g$ is a degree one normal map.
From \cite[Theorem 1 and page 497]{Casson} we know that the obstruction for
$g$ to be homotopic to a pre-fibration
is given by the (simply-connected) surgery obstruction of this degree one
normal map. The surgery obstruction groups in odd dimensions being trivial, we
have to distinguish between the cases where $k$ is divisible by 4 and where
$k$ is congruent $2$ modulo $4$.

In the case where $k$ is divisible by $4$, the surgery obstruction is an
integer which (up to a multiple) is given by the difference of signatures. The
homotopy fiber $L'$ of $g$ is homotopy equivalent to $S^{4\alpha-k}\times
S^{4\beta}$, so that its signature is zero. On the other hand, by Hirzebruch's
signature formula we obtain the signature of $F$ (which is framed in $P_1$) by 
\[\sigma(F)=\langle L(P_1), [F]\rangle\, ,\]
which is zero since by our choices of $\alpha$ and $\beta$ there is no $L$-class in the relevant degree. 

In the case where $k$ is congruent 2 modulo 4, the surgery obstruction group is cyclic of order 2. In the notation of the proof of Theorem \ref{Davis_theorem}, this surgery obstruction is given by the composite group homomorphism
\[[P_0, G/\Top]\to [S^{4\alpha-k}\times S^{4\beta}, G/\Top] \to L_{4n-k}(\mathbb{Z})\cong \mathbb{Z}/2\]
where the first map is given by restriction. It follows that this obstruction becomes zero if the number $R$ appearing in Theorem \ref{Davis_theorem} is multiplied by 2.
\end{proof}

In general the map $\phi_1$ is not homotopic to 
an actual (smooth) fiber bundle: By the theory developed in \cite{Casson} there is an 
obstruction lying in  a  homotopy group of a certain concordance space. Our goal is to show that $(P_1,\phi_1)$ can be replaced by some $(P_2, \phi_2)$ which fibers over $S^k$.

Casson \cite[Section 4]{Casson} calls  two pre-fibrations $f \colon V \to S^m$
and $ f'\colon V' \to S^m$ {\em equivalent} if there is a diffeomorphism $V
\to V'$ compatible with $f$ and $f'$ in a neighborhood of $f^{-1}(b)$. This
implies that the fibers of $f$ and $f'$ are diffeomorphic. Let
$F$ denote this common fiber. Following Casson, equivalence classes of pre-fibrations are classified as follows:
Let $\tilde{A}(F)$ be the simplicial group of block diffeomorphisms of $F$. Recall from \cite[p.~5]{Hatcher} that the 
$k$-simplices in $\tilde{A}(F)$ are given by diffeomorphisms of $F \times \Delta^k$ restricting to diffeomorphisms of 
$F \times \tau$ for each face of $\Delta^k$. An element in $\pi_k
(\tilde{A}(F), \id)$ is represented by a self-diffeomorphism of $F\times D^k$ which
is the identity in a neighborhood of $F\times S^{k-1}$, and two elements agree
if and only if they are concordant by a concordance which keeps a neighborhood
of $F\times S^{k-1}$ fixed. Thinking of $D^k$ as the northern hemisphere
$D^k_+\subset S^k$, such an $f$ extends by the identity map to a
self-diffeomorphism $f'$ of $F\times S^k$. We may use $f'$ to glue two copies
of $F\times D^{k+1}$ along their common boundaries to obtain a closed manifold
$V$ which comes with a canonical projection to $S^{k+1}$, collapsing $F\times
D^k_+$ to a point, which is easily seen
to be a pre-fibration which we call $V(x)$. 

{In the following we will suppress the obvious base
point $\id$ in the notation of homotopy groups.} 

\begin{prop} Let $F$ be simply connected and of dimension at least $6$, and let $k\geq 2$. Then the rule $x\mapsto V(x)$ defines a one-to-one correspondence 
between $\pi_{k-1} (\tilde{A}(F))$ and equivalence classes of pre-fibrations over $S^k$ with fiber $F$.
\end{prop}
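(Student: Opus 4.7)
The plan is to follow Casson's classification strategy from \cite{Casson}, verifying that the assignment $V\colon x\mapsto V(x)$ is well-defined, surjective, and injective on equivalence classes.

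\textbf{Well-definedness.} A class $x\in\pi_{k-1}(\tilde{A}(F))$ is represented by a self-diffeomorphism $g$ of $F\times D^{k-1}$ that is the identity near $F\times S^{k-2}$, and two representatives $g_0, g_1$ define the same class precisely when they are joined by a concordance through such diffeomorphisms keeping a neighborhood of $F\times S^{k-2}$ fixed. Such a concordance, extended by the identity on the opposite hemisphere of $S^k$, parametrizes a diffeomorphism between $V(g_0)$ and $V(g_1)$ fixing a neighborhood of the base fiber, establishing that $V$ descends to equivalence classes.

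\textbf{Surjectivity.} Given a pre-fibration $\phi\colon V\to S^k$ with fiber $F=\phi^{-1}(b)$, the regularity of $b$ and the triviality of the normal bundle of $F$ yield a tubular neighborhood $U\cong F\times D^k_-$. The key task is to show that the complement $V_0 := V\setminus\operatorname{int}(U)$ is diffeomorphic, rel $\partial V_0=F\times S^{k-1}$, to $F\times D^k_+$. The pre-fibration condition that $F\hookrightarrow L$ is a weak equivalence (equivalently, that $V\setminus F$ is homotopy equivalent to $F$ via path-lifting from $S^k\setminus\{b\}$) implies that the inclusion $\partial V_0\hookrightarrow V_0$ is a homotopy equivalence. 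Since $F$ is simply connected and $k\geq 2$, one has $\pi_1(V_0)\in\{1,\Z\}$, so the relevant Whitehead group vanishes ($\operatorname{Wh}(1)=\operatorname{Wh}(\Z)=0$), and the inclusion is a simple homotopy equivalence. The $s$-cobordism theorem applies, as $\dim V_0=\dim F+k\geq 8$, and yields the desired trivialization. The boundary diffeomorphism produced, arranged to be a block diffeomorphism in the sense of $\tilde{A}(F)$, defines a class $x\in\pi_{k-1}(\tilde{A}(F))$ with $V(x)$ equivalent to $V$.

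\textbf{Injectivity and main obstacle.} An equivalence $V(x_0)\to V(x_1)$ preserves a neighborhood of the base fiber by definition, hence restricts to a diffeomorphism of the complements which realises a block concordance from $x_0$ to $x_1$, giving injectivity. The most delicate step is the surjectivity argument: the $s$-cobordism theorem produces \emph{some} diffeomorphism $V_0\cong F\times D^k_+$ rel boundary, but one must arrange the induced boundary diffeomorphism to be a block diffeomorphism of $F\times S^{k-1}$ that extends by the identity to all of $F\times D^{k-1}$ and represents a genuine element of $\pi_{k-1}(\tilde{A}(F))$. Achieving this compatibility uses the simply-connectedness and the dimension bound $\dim F\geq 6$ essentially, together with block-bundle and concordance-space techniques developed in \cite{Hatcher}.
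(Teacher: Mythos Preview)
Your overall plan---check well-definedness, surjectivity, injectivity following Casson---is the right one, and indeed the paper itself does not reprove this but simply cites \cite[Lemma~6]{Casson} together with the elementary identification of $\pi_{k-1}(\tilde{A}(F))$ with Casson's group $D_{k-1}(F)$ (diffeomorphisms of $F\times S^{k-1}$ fixing a neighbourhood of $F\times D^{k-1}_-$, modulo concordance rel that neighbourhood). So your attempt is more ambitious than the paper's.

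However, the surjectivity step contains a genuine error. You assert that the inclusion $\partial V_0\hookrightarrow V_0$ is a homotopy equivalence. This is false: you yourself observe that $V\setminus F\simeq F$, hence $V_0\simeq F$, while $\partial V_0=F\times S^{k-1}$; for $k\ge 2$ the map $F\times S^{k-1}\to F$ is not a homotopy equivalence. (Your hedge ``$\pi_1(V_0)\in\{1,\Z\}$'' already signals trouble: since $V_0\simeq F$ and $F$ is simply connected, one always has $\pi_1(V_0)=1$.)

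Even granting your claim, the $s$-cobordism theorem would not yield what you need. That theorem treats cobordisms $(W;M_0,M_1)$ with \emph{two} boundary components and produces $W\cong M_0\times[0,1]$; here $V_0$ has a single boundary component, and the desired conclusion is $V_0\cong F\times D^k$, not $V_0\cong (F\times S^{k-1})\times[0,1]$. What is actually required is Casson's trivialization lemma \cite[Lemma~5]{Casson}, which establishes $V_0\cong F\times D^k$ by a direct handle-cancellation argument using that both $F$ and $V\setminus F$ are simply connected and that $\dim F$ is large enough for the Whitney trick. That is where the hypotheses $\pi_1(F)=1$ and $\dim F\ge 6$ genuinely enter, and it is not reducible to a bare invocation of the $s$-cobordism theorem.
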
 

\begin{proof}
This is \cite[Lemma 6]{Casson}, once one has identified $\pi_{k}( \tilde{A}(F))$
with what Casson calls $D_{k}(F)$. The latter group is given by the
diffeomorphisms of $F\times S^k$ keeping a neighborhood of $F\times D^k_-$
pointwise fixed, modulo concordance keeping $F\times D^k_-$ pointwise
fixed. (Here $D^k_-$ is the lower hemisphere.) There is  a canonical map
$D_k(F)\to \pi_k ( \tilde{A}(F)) $ given by restriction of a diffeomorphism to
$F\times D^k_+$. Its inverse is given by extending a diffeomorphism of
$F\times D^k_+$ by the identity.
\end{proof}

For a pre-fibration $f \colon V \to S^k$ with fiber $F$, which we assume to be simply connected and of dimension at least $6$, 
we denote by $h(V,f) \in \pi_{k-1}( \tilde{A}(F)) $ the corresponding element. We call this the {\em characteristic element} of the pre-fibration. We now formulate a condition when a pre-fibration is equivalent to a fiber bundle. 
 
Let $A(F)$ be the simplicial group of diffeomorphisms of $F$, where 
the $k$-simplices are given by diffeomorphisms of $F \times \Delta^k$ which are compatible with the projection to $\Delta^k$. 
Note that $A(F)$ is a simplicial subgroup of $\tilde{A}(F)$, which is in general not normal.

If $x\in \pi_k ( \tilde{A}(F)) $ lies in the image of $\pi_k ( A(F)) $, then $V(x)$ is
obtained (up to equivalence) by gluing two copies of $V\times D^k$ along a
diffeomorphism $V\times S^{k-1}$ which acts  fiberwise over $S^{k-1}$. It follows
that the projection $V(x)\to S^k$ is a smooth fiber bundle. Using the exact
sequence 
\[\pi_k ( A(F)) \to \pi_k ( \tilde{A}(F) ) \xrightarrow{\psi} \pi_k (\tilde{A}(F), A(F))\]
we obtain:

\begin{prop} Let $f \colon V \to S^k$ be a pre-fibration with simply connected fiber $F$ of dimension at least $6$ with characteristic element $h(V,f)  \in \pi_{k-1}( \tilde{A}(F)) $. 
 If $\psi(h) = 0 \in \pi_{k-1}(\tilde{A}(F), A(F))$, then $f \colon V \to S^k$ is (as a pre-fibration) equivalent to a smooth fiber bundle. 
 \end{prop}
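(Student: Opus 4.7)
The plan is to combine the long exact sequence of the pair $(\tilde A(F), A(F))$ with the classification result stated just above the proposition. First, from the piece
\[
\pi_{k-1}(A(F)) \to \pi_{k-1}(\tilde A(F)) \xrightarrow{\psi} \pi_{k-1}(\tilde A(F), A(F))
\]
and exactness at the middle term, the hypothesis $\psi(h(V,f)) = 0$ yields a lift $\tilde h \in \pi_{k-1}(A(F))$ of the characteristic element $h(V,f)$.

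Next, I would appeal to the observation made in the paragraph preceding the proposition: if a class in $\pi_{k-1}(\tilde A(F))$ is in the image of $\pi_{k-1}(A(F))$, then its associated pre-fibration $V(\cdot) \to S^k$ can be assembled from two copies of $F \times D^k$ by a clutching diffeomorphism of $F \times S^{k-1}$ which already acts fiberwise, so the clutching data respects the projection and the total space is a genuine smooth fiber bundle over $S^k$. Applying this to $\tilde h$ produces a smooth fiber bundle $V(\tilde h) \to S^k$ whose characteristic element (as a pre-fibration) is $h(V,f)$.

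Finally, I would invoke the preceding proposition: under the assumptions that $F$ is simply connected of dimension at least $6$ and that $k \geq 2$, the rule $x \mapsto V(x)$ sets up a bijection between $\pi_{k-1}(\tilde A(F))$ and equivalence classes of pre-fibrations over $S^k$ with fiber $F$. Since the pre-fibrations $(V,f)$ and $V(\tilde h) \to S^k$ share the same characteristic element $h(V,f)$, they are equivalent as pre-fibrations, and hence $(V,f)$ is equivalent to a smooth fiber bundle, as claimed.

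There is no serious obstacle once the classification of pre-fibrations by their characteristic element and the interpretation of $\pi_{k-1}(A(F))$-classes as fiberwise clutching data are in place; the argument is essentially formal exactness, and the only care needed is to verify that the dimension and connectivity hypotheses required by the classifying bijection of the previous proposition are part of the running assumptions.
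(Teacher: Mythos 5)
Your argument is correct and matches the paper's (the paper gives no separate proof; it states the proposition as an immediate consequence of the preceding paragraph about fiberwise clutching together with the exact sequence of the pair $(\tilde A(F), A(F))$). You reproduce exactly that reasoning: exactness lifts $h(V,f)$ to $\pi_{k-1}(A(F))$, a fiberwise clutching diffeomorphism makes $V(h(V,f))\to S^k$ a genuine bundle, and Casson's bijection (which requires $k\geq 2$, as you correctly flag) identifies $(V,f)$ with $V(h(V,f))$ up to equivalence.
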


 For a closed smooth manifold $K$ we consider the pre-fibration 
 \[
    \phi_1 \times \id  \colon  P _1 \times K \to S^{k} \, \quad (p,x) \mapsto \phi_1(p) 
 \]
with fiber $F \times K$.  Then $\psi (h( \phi_1 \times \id))$ is the image of
$\psi(h(\phi_1))$ under the map  of homotopy groups induced by the map 
 \[
   (\widetilde{A}(F), A(F)) \to (\widetilde{A}(F \times K), A(F \times K)) 
\]
which sends a diffeomorphism $\omega\colon F \times \Delta^k \to F \times \Delta^k$ to 
$\omega \times \id\colon F \times \Delta^k \times K \to F \times  \Delta^k \times K $.

 \begin{prop} \label{tricky} For a closed smooth manifold $K$ with vanishing
   Euler characteristic $\chi(K)$ and $r>0$,  the induced map 
 \[
      \pi_r(\tilde{A}(F), A(F)) \to \pi_r(\tilde{A}(F \times K^{r}), A(F
      \times K^{r}))
 \]
 is equal to $0$. Here $K^r$ denotes the $r$-fold Cartesian product of $K$
 with itself.
 \end{prop}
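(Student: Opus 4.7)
The plan is to exploit the fact that $\chi(K)=0$ provides a nowhere-vanishing vector field on $K$, and to use the $r$ distinct copies of $K$ in $K^r$ to kill the obstruction one parameter-direction at a time.

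An element of $\pi_r(\tilde{A}(F),A(F))$ is represented by a block diffeomorphism $\omega$ of $F\times\Delta^r$ whose restriction to each proper face $\tau\subset\Delta^r$ is already compatible with the projection to $\tau$. Its image under the map in the statement is $\omega\times\id_{K^r}$, and I must show that this is isotopic, through block diffeomorphisms of $F\times K^r\times\Delta^r$ whose restriction to each proper face lies in $A(F\times K^r)$, to a diffeomorphism compatible with the full projection to $\Delta^r$.

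First I would fix a nowhere-vanishing smooth vector field $v$ on $K$, whose existence is equivalent to the hypothesis $\chi(K)=0$, and let $\Phi^{(i)}_s$ denote the flow of $v$ acting on the $i$-th factor of $K^r$. I would then straighten the block structure inductively in $i=1,\dots,r$: at the $i$-th step I use the flow $\Phi^{(i)}$, paired with a bump function in the $i$-th barycentric coordinate $t_i$ of $\Delta^r$, to isotope the current representative (already fiber-preserving over $t_1,\dots,t_{i-1}$) to one that is also fiber-preserving over $t_i$. Intuitively, the failure of $\omega$ to commute with the projection to $t_i$ is absorbed by a flow along $v$ in the $i$-th $K$-factor; because $v$ has no zero, this absorption can be carried out globally on $K$, and the use of an independent copy of $K$ for each $i$ ensures that later straightenings do not disturb earlier ones. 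After $r$ steps the resulting block diffeomorphism commutes with projection to all of $\Delta^r$, hence represents the trivial class in $\pi_r(\tilde{A}(F\times K^r),A(F\times K^r))$.

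The main difficulty will be executing the $i$-th straightening while respecting the boundary constraint: on every proper face of $\Delta^r$ the isotopy must take values in $A(F\times K^r)$, not merely in $\tilde{A}(F\times K^r)$. I expect to achieve this through a parametrized isotopy extension argument, together with a careful choice of the bump function in $t_i$ so that the modification is supported in the interior of $\Delta^r$ in the $i$-th direction; the nowhere-vanishing property of $v$ then guarantees that the required trivialization along the $K$-flow exists globally. The fact that the product $K^r$ (rather than a single $K$) appears in the statement is precisely a reflection of this one-factor-per-parameter consumption.
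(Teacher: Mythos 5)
Your intuition that the $r$ factors of $K$ should be ``consumed'' one per filtration level, with the vanishing of $\chi(K)$ as the key hypothesis, is pointed in the right direction, but the argument as written has a genuine gap at its heart. The paper's proof runs through two non-trivial inputs that your sketch bypasses: Hatcher's spectral sequence
\[
E^1_{pq}=\pi_q\bigl(C(F\times I^p)\bigr)\Longrightarrow \pi_{p+q+1}\bigl(\tilde A(F),A(F)\bigr)
\]
relating the relative block-diffeomorphism group to homotopy groups of concordance (pseudoisotopy) spaces, and Hatcher's Appendix~I theorem that $\chi(K)=0$ forces the stabilization $\pi_j(C(X))\to\pi_j(C(X\times K))$ to vanish. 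The claim that ``the failure of $\omega$ to commute with the projection to $t_i$ is absorbed by a flow along $v$ in the $i$-th $K$-factor'' is precisely where you would need to reprove (a relative, family version of) that Appendix~I theorem, and a nowhere-zero vector field on a compact $K$ does not give you such an absorption by any naive isotopy: the flow is periodic/recurrent, there is no ``room at infinity'' to push the non-product part off to, and in fact Hatcher's proof proceeds via an additivity/handle-decomposition argument on $K$ rather than by flowing. You acknowledge this is ``the main difficulty,'' but it is not a technicality to be cleaned up --- it is the entire content of the proposition.

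There is a second, structural gap. Even granting the concordance-space vanishing, it is not a priori clear that a single cross with $K$ lets you straighten exactly one barycentric direction of $\Delta^r$. The barycentric coordinates of $\Delta^r$ do not directly index the obstruction; what does is the filtration on $\pi_r(\tilde A,A)$ coming from the spectral sequence, in which $E^\infty_{pq}$ contributes at filtration level $p$ and crossing with $K$ kills $E^1_{pq}$ outright, hence drops every class by one filtration step. Your inductive ``one $t_i$ at a time'' scheme is a plausible geometric picture for this algebraic fact, but it is not a substitute for it; without the spectral sequence (or some equivalent structural result) there is no reason the induction closes after exactly $r$ steps, nor that the intermediate ``partially straightened'' diffeomorphisms can be kept in the appropriate boundary class on proper faces of $\Delta^r$.
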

 
 \begin{proof}  We consider the Hatcher spectral sequence \cite[Proposition
   2.1]{Hatcher} 
 \[
      E_{pq}^1= \pi_q(C(F \times I^p)) \Longrightarrow \pi_{p+q+1}(\widetilde{A}(F), A(F)) \, . 
\]
By \cite[Proposition in Appendix I on p.~18]{Hatcher}, multiplication 
 with $K$ induces the zero map 
 \[
      \pi_j(C(F \times I^p)) \to \pi_j(C(F \times I^p \times K)) 
 \]
 for each $j$, because $\chi(K) = 0$. It follows  that in the filtration of $\pi_r(\tilde{A}(F), A(F))$ 
 induced by the $E^{\infty}$-term of the Hatcher spectral sequence the map induced by the product
 with $K$ reduces the filtration degree of each element by one. This implies the assertion of Proposition \ref{tricky}. 
 \end{proof}
 
Thus let $K$ be a closed $\max(l,1)$-connected smooth manifold with $\hat A(K)\neq 0$ and $\chi(K)=0$. (For instance, we may use the construction at the beginning of this section for appropriate values of $k$, $\alpha$ and $\beta$). Applying Lemma \ref{tricky} to our previous construction we conclude that  the pre-fibration 
 \[
     \phi_1 \times \id \colon  P_1 \times K^{k-1}  \to S^{k} 
 \]
 is equivalent to a smooth fiber bundle $\phi_2 \colon P_2  \to S^{k}$. Because $\hat{A}(K) \neq 0$ we still have $\hat{A}(P_2) \neq 0$.

It remains to prove that we can assume that we have a smooth section $s\colon S^{k} \to P_2$ with trivial 
normal bundle. First notice that by choosing $\alpha$ and $l$ large enough, there is a smooth section $s\colon S^{k} \to P_2$, of which we would like to show that its normal bundle is trivial.

Again the case $k=1$ is the simplest one. In fact a real bundle over the
circle is trivial if and only if its first Stiefel-Whitney class vanishes, and
the first Stiefel-Whitney class of the normal bundle agrees with
$w_1(P_0)=0$.

In the case where $k\geq 2$ we argue as follows. Since we are in the stable range, the normal bundle is classified by an element in $\pi_k ( BO) $. If $k\equiv3,5,6,7$ modulo $8$, the group $\pi_k ( BO) $ is zero, so that the normal bundle is automatically trivial. In the cases where $k=4l$ is divisible by 4,  we have $\pi_k BO=\mathbb{Z}$ and non-trivial bundles over $S^k$ may be detected by their  $l$-th 
rational Pontryagin classes. The $l$-th rational Pontryagin class of the normal bundle is equal to $s^*(p_l(P_2)) \in H^{4l}(S^{4l} ; \Q)$. 
But by the above construction of $P_2$ this class is equal to $0$, so that the normal bundle is trivial in this case, too.

The remaining cases are $k\equiv 1$ or $2$ modulo $8$, in which case $\pi_k
( BO) =\mathbb{Z}/2$.  We do not see a general reason why the normal bundle should
be trivial in this case, but we describe a procedure how to change the
pre-fibration $\phi_1\colon P_1\to S^k$ so that the normal bundle of the
embedded $S^k$ in $P_1$ becomes trivial. Since $P_2$ is diffeomorphic to
$P_1\times K^{k-1}$, this will imply that the normal bundle of the embedding
$S^k\to P_2$ is also trivial.

Recall that by Casson's classification result, if $\dim F \geq 6$ and $F$ is simply conneced any pre-fibration over $S^k$ with fiber $F$ is equivalent to one of the form $V(x)$, with $x\in \pi_{k-1}(\tilde{A}(F)) $. If $F$ is $k$-connected and at least $(k+2)$-dimensional, then there is an embedding $S^k\to V(x)$ such that the composite map $S^k\to V(x)\to S^k$ is homotopic to the identity, and any two such embeddings are isotopic. We call such an embedding simply the embedding of $S^k$ into $V(x)$ and denote by $\nu_x\colon S^k\to BO$ the classifying map of its normal bundle.

\begin{lem}
Suppose that $F$ is $k$-connected and at least $(k+2)$-dimensional (where $k\geq 2$), and let $x,y\in \pi_{k-1}( \tilde{A}(F)) $. Then the normal bundle of the embedding of $S^k$ into $V(x+y)$ is classified by $\nu_x+\nu_y$. Moreover
\[\hat A(V(x+y))=\hat A(V(x)) + \hat A(V(y)).\]
\end{lem}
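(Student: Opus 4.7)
The plan is to reduce both statements to a geometric observation: the sum $x+y$ admits a particularly clean representative via the pinch construction, which globalizes to a decomposition of $V(x+y)$ in terms of $V(x)$ and $V(y)$.

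By the $H$-space structure on $\tilde A(F)$ (coming from composition of block diffeomorphisms), the sum $x+y$ may be represented by a map $f_{x+y}\colon (D^{k-1}, \partial D^{k-1}) \to (\tilde A(F), \id)$ concentrated in two disjoint sub-disks $D_x, D_y \subset D^{k-1}$: it equals a chosen representative $f_x$ on $D_x$, a chosen representative $f_y$ on $D_y$, and is the identity elsewhere. This is the usual pinch description of addition in homotopy groups. Viewing $D^{k-1}$ inside the equator $S^{k-1}\subset S^k$, let $B_x, B_y \subset S^k$ be disjoint closed balls forming small neighborhoods of $D_x, D_y$. Inspection of the clutching construction for $V$ then shows that, setting $L_x := V(x)|_{B_x}$ and $L_y := V(y)|_{B_y}$,
\begin{align*}
  V(x) &\cong F\times (S^k\setminus B_x^\circ) \cup_{F\times\partial B_x} L_x, \\
  V(y) &\cong F\times (S^k\setminus B_y^\circ) \cup_{F\times\partial B_y} L_y, \\
  V(x+y) &\cong F\times (S^k\setminus (B_x^\circ\cup B_y^\circ)) \cup L_x \cup L_y.
\end{align*}

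The normal bundle statement is then immediate. The embedded $S^k \subset V(x+y)$ can be chosen so that outside $B_x\cup B_y$ it is the constant section $\{p_0\}\times -$ of the trivial region, while over $B_x, B_y$ it agrees with the corresponding sections inside $V(x), V(y)$ (isotopically unique given the connectivity hypotheses on $F$). Hence $\nu_{x+y}$ is trivialized outside $B_x\cup B_y$ and restricts to $\nu_x|_{B_x}$, $\nu_y|_{B_y}$ over these balls. Collapsing the trivialized complement realizes $\nu_{x+y}$ as the composite $S^k \xrightarrow{\text{pinch}} S^k\vee S^k \xrightarrow{\nu_x\vee\nu_y} BO$, which by definition represents $\nu_x + \nu_y$ in $\pi_k(BO)$.

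For the additivity of $\hat A$ we construct a spin cobordism $W$ of dimension $\dim F + k + 1$ with
\[ \partial W = V(x+y) \sqcup (F\times S^k) \sqcup (-V(x)) \sqcup (-V(y)). \]
Interpolating on the bases, decompose $S^k = (S^{k-1}\times I) \cup D^k_- \cup D^k_+$ so that $D^{k+1}$ is a rel-boundary bordism between $S^{k-1}\times I$ (boundary $S^{k-1}\sqcup S^{k-1}$) and $D^k \sqcup D^k$ (same boundary). Crossing with $F$, the spin manifold $F\times D^{k+1}$ provides a bordism between the trivial pieces of $V(x+y)$ on one side and those of $V(x) \sqcup V(y)$ on the other. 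Gluing in $L_x\times I$ and $L_y\times I$ along $F\times \partial B_x\times I$ and $F\times \partial B_y\times I$, and appending a second copy of $F\times D^{k+1}$ to cap off the additional $F\times S^k$ component, completes the construction of $W$. Since $\hat A(F\times S^k) = \hat A(F)\cdot\hat A(S^k) = 0$ for $k\geq 2$, bordism invariance of $\hat A$ yields $\hat A(V(x+y)) = \hat A(V(x)) + \hat A(V(y))$. The main technical point is verifying that the pinch-disjoint-support representative of $x+y$ really gives the claimed global decomposition of $V(x+y)$, which relies on $\tilde A(F)$ being a simplicial group so that pinch sums and pointwise products represent the same element of $\pi_{k-1}(\tilde A(F))$.
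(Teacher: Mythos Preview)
Your argument is correct and follows essentially the same route as the paper: both proofs localize the representative of $x+y$ (you via the pinch sum with disjoint support, the paper via the composition $\beta\circ\alpha$, which agree by Eckmann--Hilton), read off the decomposition of $V(x+y)$, and deduce that $\nu_{x+y}$ factors through the pinch map. For the $\hat A$-statement the paper observes more directly that $V(x+y)$ is the parametrized (fiberwise) connected sum of $V(x)$ and $V(y)$, giving a bordism with boundary $V(x+y)\sqcup -V(x)\sqcup -V(y)$; your construction with the extra $F\times S^k$ component and the second copy of $F\times D^{k+1}$ is unnecessary but harmless.
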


\begin{proof}
Let $x$ and $y$ be represented by automorphisms $\alpha$ and $\beta$ of
$F\times D^{k-1}$, fixing a neighborhood of $F\times S^{k-2}$, and denote by
$\alpha'$ and $\beta'$ the corresponding automorphisms of $F\times
S^{k-1}$. Because 
the element $x+y$ is
represented by the composition $\beta\circ\alpha$, the space $V(x+y)$ can be written as a ``fibered
connected sum''
\[
{V(x+y)\cong \left( F\times D^{k}_+\right)  \cup_{\alpha'} \left( F\times S^{k-1}\times [0,1] \right) \cup_{\beta'} \left( F\times D^{k}_- \right) } . 
\]

We use this identification to describe the embedding of $S^k$ into $V(x+y)$. To do that, let us first describe the embedding $i_x$ of $S^k$ into $V(x)$. Choose a base point $*\in F$. On the lower hemisphere $D^k_-$ the embedding $i_x$ is given by the inclusion $i_-\colon D^k_-\to D^k_-\times F$ at the base point. The composite
\[S^{k-1}\xrightarrow{i_-} S^{k-1}\times F \xrightarrow{\alpha'} S^k\times F\]
extends continuously to a map $D^k_+\to D^k_+\times F$ (coning off and using a
null homotopy 
of the map ${\rm pr}_F\circ \alpha'\circ i_-\colon S^k\to F$). The extension may be
approximated by a smooth
embedding $i_+$ in such a way that $i_+$ and $i_-$ together define the
embedding of $S^k$ into $V(x)$.

Similarly one obtains a choice of the embedding $i_y$ of $S^k$ into $V(y)$
which is the inclusion at the base point on the upper hemisphere. Denoting the
embedding at the base point by $j\colon S^{k-1}\times [0,1]\to F\times
S^{k-1}\times [0,1]$, it follows that 
\begin{multline*}
  i_x\vert_{D^k_+}\cup j\cup i_y\vert_{D^k_-}\colon\\
    D^k_+\cup \left( S^{k-1}\times [0,1]\right) \cup D^k_-\to \left( F\times
      D^{k}_+\right) \cup_{\alpha'} \left( F\times S^{k-1}\times [0,1] \right)
    \cup_{\beta'} \left( F\times D^{k}_- \right)
\end{multline*}
defines the embedding of $S^k$ into $V(x+y)$.

The normal bundle of this embedding is given by $\nu_x\vert_{D^k_+}$ on
$D^k_+$ and by $\nu_y\vert_{D^k_-}$ on $D^k_-$, while on $S^{k-1}\times [0,1]$
the normal bundle is trivialized. It follows that the classifying map
$\nu_{x+y}$ factors as
\[
S^k\to S^k\vee S^k \xrightarrow{\nu_x\vee \nu_y} BO
\]
where the first map is the pinch map. But this composite defines the sum $\nu_x+\nu_y$ in the homotopy group $\pi_k (BO)$ so that the first statement follows.

Finally notice that $V(x+y)$ is obtained from $V(x)\amalg V(y)$ by cutting out
two copies of $F\times D^k$ and gluing along the resulting $F\times S^{k-1}$
boundaries, defining a parametrized connected sum. Hence there is a standard bordism
between these $V(x+y)$ and the disjoint union 
$V(x)\cup V(y)$. This implies the second statement.
\end{proof}

Suppose now that in our situation, the normal bundle of $S^k$ in $P_1=V(x)$ is
non-trivial. As $\pi_k (BO)$ has order 2, replacing $P_1$ by $P'_1=V(2x)$
yields a pre-fibration with the same fiber $F$, whose section has trivial normal
bundle and such that $2\hat A(P_1)=\hat A(P'_1)\neq 0$. This completes the proof of Theorem
\ref{technical} in the case $k \geq 1$.

If $k = 0$ we can use the same construction as for case $k \geq 1$, but starting with $P_0 = S^{4 \alpha} \times S^{4 \beta}$. 
With Theorem \ref{Davis_theorem}, Lemma \ref{computation_of_coefficients}
and Proposition \ref{calc} we find an $N(0,l)$ so that for all $n \geq N(0,l)$ there is a $4n$-dimensional $l$-connected closed spin 
manifold $F$ with $\hat{A}(F) \neq 0$. Now we define $E$ as the disjoint union of two copies of the spin manifold $F$. 
The surjective map to $S^0$ is constant on each component.

\end{document}